\renewcommand\part{%
	\if@noskipsec \leavevmode \fi
	\par
	\addvspace{4ex}%
	\@afterindentfalse
	\secdef\@part\@spart}
\def\@part[#1]#2{%
	\ifnum \c@secnumdepth >\m@ne
	\refstepcounter{part}%
	\addcontentsline{toc}{part}{\thepart\hspace{1em}#1}%
	\else
	\addcontentsline{toc}{part}{#1}%
	\fi
	{\parindent \z@ \raggedright
		\interlinepenalty \@M
		\normalfontsymmetric decreasing rearrangement
		\ifnum \c@secnumdepth >\m@ne
		\Large\bfseries \partname\nobreakspace\thepart
		\par\nobreak
		\fi
		\huge \bfseries #2%
		\par}%
	\nobreak
	\vskip 3ex
	\@afterheading}
\def\@spart#1{%
	{\parindent \z@ \raggedright
		\interlinepenalty \@M
		\normalfont
		\huge \bfseries #1\par}%
	\nobreak
	\vskip 3ex
	\@afterheading}
\date{\today}
\theoremstyle{definition} \newtheorem{definition}{Definition}[section]
\theoremstyle{definition} \newtheorem{remark}[definition]{Remark}
\theoremstyle{plain} \newtheorem{lemma}[definition]{Lemma}
\theoremstyle{plain} \newtheorem{proposition}[definition]{Proposition}
\theoremstyle{plain} \newtheorem{theorem}[definition]{Theorem}
\theoremstyle{plain} \newtheorem{corollary}[definition]{Corollary}
\theoremstyle{definition} 
\theoremstyle{plain} 
\theoremstyle{definition} 
\theoremstyle{definition}
\newcommand{\R}{\mathbb{R}}
\newcommand{\loc}{\text{\rm loc}}
\newcommand{\ind}{1\!\!\mathrm{I}}
\newcommand{\hb}{\tilde b}
\numberwithin{equation}{section} 
\theoremstyle{plain} \newtheorem*{theorem*}{Theorem}
\theoremstyle{plain} 
\theoremstyle{plain} \newtheorem*{mthm*}{Main Theorem}
\theoremstyle{plain} \newtheorem*{conjecture*}{Conjecture}
\theoremstyle{plain} 
\theoremstyle{plain} \newtheorem*{problem*}{Problem}
\title{Existence and blow-up for non-autonomous scalar conservation laws with viscosity}
\author{Stefano Bianchini}
\address{S. Bianchini: S.I.S.S.A., via Bonomea 265, 34136 Trieste, Italy}
\email{bianchin@sissa.it}
\author{Giacomo Maria Leccese}
\address{G.M. Leccese: S.I.S.S.A., via Bonomea 265, 34136 Trieste, Italy}
\email{gleccese@sissa.it}
\begin{document}


\begin{abstract}
We consider a question posed in \cite{guidolin2022global}, namely the blow-up of the PDE
\begin{equation*}
    u_t + (b(t,x) u^{1+k})_x = u_{xx}
\end{equation*}
when $b$ is uniformly bounded, Lipschitz and $k = 2$. We give a complete answer to the behavior of solutions when $b$ belongs to the Lorentz spaces $b \in L^{p,\infty}$, $p \in (2,\infty]$, or $b_x \in L^{p,\infty}$, $p \in (1,\infty]$.
\end{abstract}

%
%
\maketitle



%


\section{Introduction}

In this paper, we study the global in time existence and long time behavior for the initial value problem
\begin{equation}\label{m:eq:pde}
\begin{cases}
    u_t + (b(t,x) u^{k+1})_x = u_{xx},\quad x\in\R,\,t \in (0,\infty),\\
    u(0,x)=u_0(x),
\end{cases}
\end{equation}
where $b(t,x)$ is a non-autonomous drift and $u_0$ is the initial datum $u_0$. This question was raised in \cite{guidolin2022global}, where the subcritical case was analyzed. It is not restrictive to assume that
\begin{equation*}
    u(t,x) \geq 0,
\end{equation*}
because the PDE \eqref{m:eq:pde} is monotone w.r.t. the initial data. Moreover by scaling $a u(a^2 t,ax)$ we can assume that $\|u_0\|_1 = \|u(t)\|_1 = 1$, where we have used that the PDE is in divergence form.

In this work we consider drifts $b$ which are in weak-$L^P$ or with derivative in weak-$L^p$. More precisely, we make the following assumptions on the initial data $u_0$ and exponent $k$: 
\begin{enumerate}
\item $b$ is only integrable:
\begin{equation}\label{eq:con1}
b\in L^\infty_{\text{loc}} ((0,\infty), L^{p,\infty}(\R)),\ p\in(2,\infty], \quad \begin{cases}
k> 0,\\
u_0\in L^1(\R)\cap L^\infty(\R);
\end{cases}
\end{equation}
\item $b$ has a weak derivative in $x$:
\begin{equation}\label{eq:con2}
\begin{cases}
    b \in L^\infty_\text{loc}((0,\infty) \times \R), \\
b_x \in L^\infty_{\text{loc}}((0,\infty), L^{p,\infty}(\R)),\ p\in(1,\infty], 
\end{cases}
\begin{cases}
k \geq \frac{1}{2},\\
u_0\in  L^1(\R)\cap L^\infty(\R),\\ 
E(u_0):=\int |x|^2|u_0(x)|\,dx <+\infty.
\end{cases}
\end{equation}
\end{enumerate}
The space $L^{p,\infty}$ is the standard Lorentz space, see Subsection \ref{sec:lorentz} for the precise definition: we just recall here that $L^{p,\infty}$ is also referred to as the weak-$L^p$ space.

The case $k=0$ corresponds to a linear PDE, which can be studied by means of the Duhamel formula: hence in this paper we restrict ourselves to $k > 0$. The assumption $k \geq \frac{1}{2}$ in \eqref{eq:con2} is due to the fact that the drift $b$ may be unbounded. If $b$ is uniformly bounded that one can remove this assumption. It has actually no influence in the blow-up behaviour, which is a local property.

The aim is to investigate the relation between the exponents $k$ and $p$ so that the solution exists globally in $L^\infty$ and to study the behaviour of $u(t)$ behavior for $t \to \infty$. The results of this paper about the existence of a bounded solution can be summarized in the following theorem.

\begin{theorem}
\label{Theo:existence_1}
    Assume that the drift $b$ and the initial condition $u_0$ satisfy \eqref{eq:con1} with $k\le 1-\frac{1}{p}$ or satisfy \eqref{eq:con2} with $k \le 2-\frac{1}{p}$. Then solution $u(t)$ of \eqref{m:eq:pde} is globally defined $[0,\infty)$.
    
    Conversely, assume that $b,u_0$ satisfy \eqref{eq:con1} with $k> 1 - \frac{1}{p}$ or satisfy \eqref{eq:con2} with $k> 2-\frac{1}{p}$. Then there are bounded initial data such that the corresponding solutions of \eqref{m:eq:pde} blow up in finite time.
\end{theorem}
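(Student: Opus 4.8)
The two assertions go by entirely different mechanisms, and I would split the proof accordingly. For the existence half, the plan is to establish, for the regularised problems with a mollified (hence bounded) drift $b_\e$ — whose relevant norms are controlled by those of $b$ uniformly in $\e$ — an a priori bound on $\|u(t)\|_{L^\infty}$ which is local in time with an existence time depending \emph{only} on $\|b\|$ (more precisely on $\|b\|_{L^\infty_tL^{p,\infty}_x}$, or on $\|b_x\|_{L^\infty_tL^{p,\infty}_x}$ together with $\|b\|_{L^\infty}$) and on the conserved mass $\|u_0\|_{L^1}=1$. Iterating this bound over consecutive time intervals of that fixed length then produces a solution on all of $[0,\infty)$, and passing to the limit $\e\to0$ yields the global solution. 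For the blow-up half, the plan is to exhibit an explicit \emph{stationary} drift that advects mass towards the origin and a concentrated initial datum for which the second moment $V(t):=\int_\R x^2u(t,x)\,dx$ satisfies a differential inequality forcing $V(t)\to0$ in finite time, which is incompatible with $u(t)$ staying bounded.

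\emph{The a priori bound.} Starting from the Duhamel formula
\[
u(t)=e^{(t-t_0)\partial_x^2}u(t_0)-\int_{t_0}^{t}\partial_x e^{(t-s)\partial_x^2}\bigl(b(s)u(s)^{k+1}\bigr)\,ds
\]
and the heat estimates $\|e^{\tau\partial_x^2}f\|_{L^\infty}\lesssim\tau^{-1/2}\|f\|_{L^1}$, $\|\partial_x e^{\tau\partial_x^2}f\|_{L^\infty}\lesssim\tau^{-1/2-1/(2q)}\|f\|_{L^q}$, a Hölder estimate in Lorentz spaces together with $\|u\|_{L^m}\le\|u\|_{L^\infty}^{1-1/m}\|u\|_{L^1}^{1/m}$ (recall $\|u\|_{L^1}=1$) give, for any $q\in(1,p)$,
\[
\bigl\|b(s)u(s)^{k+1}\bigr\|_{L^q}\lesssim\|b(s)\|_{L^{p,\infty}}\,\|u(s)\|_{L^\infty}^{\,k+1-\frac1q+\frac1p}.
\]
Measuring $u$ in the scale-invariant norm $\|u\|_X:=\sup_{t_0<t<T}(t-t_0)^{1/2}\|u(t)\|_{L^\infty}$ (invariant under the mass-preserving rescaling $u\mapsto\lambda u(\lambda^2\cdot,\lambda\cdot)$) and choosing $q=(k+\tfrac1p)^{-1}$, which is admissible, i.e.\ $q\in(1,p)$, precisely when $k<1-\tfrac1p$, the power of $\|u\|_{L^\infty}$ becomes exactly $1$ and the nonlinear Duhamel term is bounded by
\[
\|b\|_{L^\infty_tL^{p,\infty}_x}\,\|u\|_X\,(T-t_0)^{\frac12(1-\frac1p-k)}.
\]
Since $\|e^{(t-t_0)\partial_x^2}u(t_0)\|_X\lesssim\|u(t_0)\|_{L^1}=1$ independently of $\|u(t_0)\|_{L^\infty}$, for $T-t_0=\tau$ small (depending only on $\|b\|$ and absolute constants) the Duhamel map is a contraction on a ball of radius $\lesssim1$ in $X$, giving $\|u(t)\|_{L^\infty}\lesssim(t-t_0)^{-1/2}$ on $[t_0,t_0+\tau]$; iterating over $[n\tau,(n+1)\tau]$ rules out finite-time blow-up and yields global existence under \eqref{eq:con1} with $k<1-\tfrac1p$. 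Under \eqref{eq:con2} the drift is bounded but carries an extra derivative in $L^{p,\infty}$; there I would instead use the $L^q$-energy identity
\[
\frac{d}{dt}\|u\|_{L^q}^q+\frac{4(q-1)}{q}\bigl\|\partial_x(u^{q/2})\bigr\|_{L^2}^2=-\frac{q(q-1)}{q+k}\int_\R b_x\,u^{q+k}\,dx,
\]
estimate the right-hand side by Hölder in Lorentz spaces and Gagliardo--Nirenberg, and absorb it into the dissipation plus lower-order terms controlled via $\|u\|_{L^1}=1$; the absorption is possible exactly when $k<2-\tfrac1p$, and a Moser iteration $q\to\infty$ then delivers the $L^\infty$ bound.

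The genuinely delicate point, and the main obstacle of the existence part, is the borderline range $k=1-\tfrac1p$ in \eqref{eq:con1} (and $k=2-\tfrac1p$ in \eqref{eq:con2}): there the admissible exponent above is $q=1$, the nonlinear Duhamel operator is exactly scale invariant, the power of $T-t_0$ vanishes, and the naive continuation scheme closes only under a smallness assumption on $\|b\|$. To cover all $b$ in the class one must exploit that diffusion makes $u$ spread: either first prove a quantitative no-concentration/decay estimate (so that $t^{1/2}\|u(t)\|_{L^\infty}$ stays bounded, hence $u(t)$ becomes small in the scaling-critical norm), or run a De~Giorgi--Nash--Moser iteration adapted to divergence-form drifts in weak-$L^p$, upgrading the critical a priori estimate to a genuine bound.

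\emph{The blow-up construction.} Take the drift stationary, odd and directed towards the origin: for \eqref{eq:con1} put $b(x)=-\sigma_0\sign(x)|x|^{-1/p}$, which lies exactly in $L^{p,\infty}(\R)$; for \eqref{eq:con2} put $b(x)=-\sigma_0\sign(x)\min\{|x|^{1-1/p},1\}$, which is bounded with $b_x$ exactly in $L^{p,\infty}(\R)$. In both cases $x\,b(x)\le0$ on $\R$ and $x\,b(x)\le-\sigma_0|x|^{1+\gamma}$ for $|x|\le R_0$, where $\gamma=-\tfrac1p$, $R_0=\infty$ in case \eqref{eq:con1}, and $\gamma=1-\tfrac1p$, $R_0=1$ in case \eqref{eq:con2}; note $1+\gamma$ equals $1-\tfrac1p$, resp.\ $2-\tfrac1p$. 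Pick $u_0$ smooth, nonnegative, compactly supported near $0$, with $\int u_0=1$ and $V(0)=E(u_0)$ small. Differentiating $V$ and integrating by parts twice (justified for the solutions of the existence theory by truncating the weight $x^2$ and, in case \eqref{eq:con1}, the drift near $0$, then passing to the limit) gives $V'(t)=2+2\int_\R x\,b(x)u^{k+1}\,dx$, hence, using $x\,b\le0$,
\[
V'(t)\le 2-2\sigma_0\int_{|x|\le R}|x|^{1+\gamma}u^{k+1}\,dx,\qquad R:=\sqrt{2V(t)},
\]
as soon as $R\le R_0$. Since $1=\int u$ forces $\int_{|x|\le R}u\ge\tfrac12$, Hölder with the weight $|x|^{-(1+\gamma)/k}$ — whose integral over $\{|x|\le R\}$ is finite precisely because $k>1+\gamma$, i.e.\ in the blow-up range — gives
\[
\tfrac12\le\int_{|x|\le R}u\le\Bigl(\int_{|x|\le R}|x|^{1+\gamma}u^{k+1}\Bigr)^{\!\frac1{k+1}}\Bigl(\int_{|x|\le R}|x|^{-\frac{1+\gamma}{k}}\Bigr)^{\!\frac{k}{k+1}}\lesssim\Bigl(\int_{|x|\le R}|x|^{1+\gamma}u^{k+1}\Bigr)^{\!\frac1{k+1}}V^{\frac{k-1-\gamma}{2(k+1)}},
\]
so that $V'(t)\le 2-c\,\sigma_0\,V(t)^{-\alpha}$ with $\alpha=\tfrac12(k-1-\gamma)>0$. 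For $V(0)$ small enough this makes $V$ strictly decreasing and drives $V(t)\to0$ in finite time $T^*$; but the elementary bound $1=\int u\le 3\|u(t)\|_{L^\infty}^{2/3}V(t)^{1/3}$ forces $\|u(t)\|_{L^\infty}\gtrsim V(t)^{-1/2}\to\infty$, so the solution cannot remain bounded up to $T^*$ and blows up at some time $\le T^*$. The only non-routine point here is the justification of the virial identity for the solutions at hand, which with the singular drift of case \eqref{eq:con1} needs the truncation argument mentioned above.
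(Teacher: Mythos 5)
Your blow-up half is essentially sound and follows the same mechanism as the paper (a Toscani-type virial argument): an odd drift pushing mass toward the origin, the identity $\frac{d}{dt}E(u(t))=2m+2\int xb\,u^{k+1}dx$, a H\"older estimate with the weight $|x|^{-(1+\gamma)/k}$ whose local integrability is exactly the supercriticality condition $k>1+\gamma$, and the resulting differential inequality $E'\le 2-cE^{-\alpha}$ forcing $E\to0$ and hence $\|u\|_\infty\to\infty$ for small $E(u_0)$. Your construction is in fact somewhat cleaner than the paper's: you take a pure power drift lying exactly in $L^{p,\infty}$ and restrict the H\"older estimate to $\{|x|\le\sqrt{2E}\}$ via Chebyshev, whereas the paper glues two power regimes (so that $b\in L^p$, a slightly stronger conclusion) and needs the two-case interpolation of Propositions \ref{lemma:func1}--\ref{lemma:func1:2} with the compatibility condition \eqref{Equa:bar_x_cond}. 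Since the hypotheses \eqref{eq:con1}--\eqref{eq:con2} only ask for weak-$L^p$, your simpler drift suffices for the statement, modulo the routine smoothing/truncation issues you already flag.

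The existence half, however, has a genuine gap: your argument only covers the strict subcritical range $k<1-\frac1p$ (resp.\ $k<2-\frac1p$), while the theorem asserts global existence up to and including the critical exponent, and you yourself concede that at $k=1-\frac1p$ the scale-invariant Duhamel scheme closes only under a smallness assumption on $\|b\|_{L^{p,\infty}}$, offering two possible remedies (a no-concentration estimate, or a De~Giorgi--Nash--Moser iteration) without carrying either out. This missing step is precisely the core of the paper's proof, and it treats the subcritical and critical cases in one stroke: one rescales around the putative blow-up time $T$ (Subsection \ref{Ss:realed_variables}), truncates the solution at a level $a$ and shows via the entropy $\eta_a$ and Gagliardo--Nirenberg that the needed smallness comes not from $\|b\|$ but from choosing $a=a(p,k,e^{\bar\tau}/T)$ small and using $\|v_a\|_2^2\le a\|v\|_1=a$ (Lemmas \ref{lemma:entropty}--\ref{lemma:entropty2}); this yields $L^2$ decay of the rescaled solution like the heat kernel (Lemma \ref{Lem:decay_L_2_case_1}), hence a uniform $L^2$ bound on $u$ near $T$ (Corollary \ref{Cor:bound_u_2}), which is then upgraded to $L^\infty$ by the Moser-type iteration \eqref{Equa:guidolin} adapted from \cite{guidolin2022global}. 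Your iteration-in-time strategy with existence time depending only on $\|b\|$ and the conserved mass is fine strictly below criticality (and your exponent bookkeeping there is consistent), but as written the proposal does not prove the theorem as stated; to repair it you would have to implement something like the truncated-entropy/rescaling estimate above, which supplies exactly the quantitative no-concentration input you identified as missing. A secondary, smaller point: in the \eqref{eq:con2} case your $L^q$-energy plus Moser sketch asserts absorption "exactly when $k<2-\frac1p$" without tracking the $q$-dependence of the constants; the paper's Step 1--3 of Theorem \ref{Theo:not_blow_up} shows this uniformity is not automatic and must be checked.
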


The first part of the above statement is contained in Theorem \ref{Theo:not_blow_up}, Section \ref{Ss:bounds_L_infty}. The second part instead in given in Theorems \ref{prop:blowup} and \ref{prop:blowup:2}, Section \ref{s:blowup}.

The analysis of the subcritical case $k < 2$ and $b$ bounded, Lipschitz has been done in \cite{guidolin2022global}. The above theorem extends their results to other class of drift $b$. The blow-up results follow the analysis in \cite{toscani2012finite}, where a specific drift is considered in the multidimensional case is considered with $k = 2$.

Concerning the long time behavior, we assume uniform bounds on $b$, i.e. we strengthen the conditions on $b$ as follows:
\begin{enumerate}
\item under the assumptions \eqref{eq:con1}, we also require
\begin{equation}
\label{Equa:con1}
    b \in L^\infty((0,\infty), L^{p,\infty}(\R)), \quad p\in(2,\infty];
\end{equation}
\item under the assumptions \eqref{eq:con2}, we also require
\begin{equation}
    \label{Equa:con2}
    b \in L^\infty((0,\infty) \times \R), \ b_x\in L^\infty((0,\infty), L^{p,\infty}(\R)), \quad p\in(1,\infty].
\end{equation}
\end{enumerate}

We obtained the following results (Theorem \ref{Theo:long_time_1}, Section \ref{s:long:behav}):

\begin{theorem}
\label{Theop:longtime_1}
Assume that the solution is bounded for all $t > 0$ and the conditions \eqref{eq:con1}, \eqref{Equa:con1}, $k \geq 1-\frac{1}{p}$ hold, or \eqref{eq:con2}, \eqref{Equa:con2}, $k \geq 2-\frac{1}{p}$ hold. Then $u(t)$ decays to $0$ uniformly as $t \to \infty$ as $t^{-\frac{1}{2}}$.

    Viceversa, assume \eqref{eq:con1}, \eqref{Equa:con1} with $k < 1 - \frac{1}{p}$ or \eqref{eq:con2}, \eqref{Equa:con1} with $k < 2 - \frac{1}{p}$. Then there are drifts $b$ and initial data $u_0$ such that the corresponding solutions of \eqref{m:eq:pde} do not decay to $0$ as $t \to \infty$.
\end{theorem}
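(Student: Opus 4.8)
The plan is to treat the two implications separately: the decay by a Duhamel / energy bootstrap, and the converse by explicit examples dictated by the parabolic scaling $u\mapsto u_\lambda:=\lambda u(\lambda^2\cdot,\lambda\cdot)$, under which $\|b(\lambda^2\cdot)\|_{L^{p,\infty}_x}$, resp.\ $\|b_x(\lambda^2\cdot)\|_{L^{p,\infty}_x}$, picks up a factor $\lambda^{1-k-1/p}$, resp.\ $\lambda^{2-k-1/p}$. The thresholds in the statement are precisely the borderline of this scaling at large $\lambda$ (i.e.\ at large times).

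\medskip
\noindent\textbf{Decay.} I would use the two a priori facts available under the hypotheses: $M_0:=\sup_{t>0}\|u(t)\|_\infty<\infty$, and $\|u(t)\|_1=1$ by mass conservation (the equation is in divergence form); in case \eqref{eq:con2} I would add the moment bound coming from $\frac{d}{dt}\int x^2u=2+2\int x\,b\,u^{k+1}\le 2+2\|b\|_\infty M_0^{k}\bigl(\int x^2u\bigr)^{1/2}$, i.e.\ $\int x^2u(t)\le\const\,(1+t)^2$. Writing \eqref{m:eq:pde} as $u_t=u_{xx}-\partial_x(Vu)$ with $V:=b\,u^{k}$ (so that $Vu=bu^{k+1}$), and using the Duhamel formula from time $t/2$, $u(t)=G_{t/2}u(t/2)-\int_{t/2}^{t}\partial_xG_{t-\tau}(V(\tau)u(\tau))\,d\tau$ with $G_\sigma:=e^{\sigma\partial_{xx}}$, the linear term is $\le\const\,t^{-1/2}\|u(t/2)\|_1=\const\,t^{-1/2}$. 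For the nonlinear term I would feed in the self-similar ansatz $\|u(\tau)\|_\infty\le N(1+\tau)^{-1/2}$ and combine: the heat bound $\|\partial_xG_\sigma\|_{r'}=\const\,\sigma^{-1+\frac1{2r'}}$; Hölder in Lorentz spaces $\|bu^{k+1}\|_{r}\le\const\,\|b\|_{L^{p,\infty}}\|u^{k+1}\|_{L^{s,r}}$ with $\tfrac1r=\tfrac1p+\tfrac1s$ (case \eqref{eq:con1}), resp.\ the trivial bound $\|bu^{k+1}\|_r\le\|b\|_\infty\|u^{k+1}\|_r$ (case \eqref{eq:con2}); the interpolation $\|u^{k+1}\|_{L^{s,r}}\le\const\,\|u^{k+1}\|_1^{1/s}\|u^{k+1}\|_\infty^{1-1/s}$; and $\|u^{k+1}\|_1\le\|u\|_\infty^{k}\|u\|_1=\|u\|_\infty^{k}$. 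A short computation (in which the auxiliary exponent $r$ cancels) bounds the nonlinear contribution by $\const\,\|b\|\,N^{\alpha}\,t^{-\frac12(k+\frac1p)}$ in case \eqref{eq:con1}, and by the analogous $\const\,\|b\|\,N^{\alpha}\,t^{-\frac12(k-1+\frac1p)}$ in case \eqref{eq:con2} (the sharp exponent there obtained after the integration by parts that brings in $b_x\in L^{p,\infty}$), with $\alpha>1$. Hence the nonlinear term is $\le\const\,t^{-1/2}$, with a strict gain $t^{-1/2-\delta}$, $\delta>0$, exactly when $k>1-\tfrac1p$, resp.\ $k>2-\tfrac1p$. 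In that strict regime a continuity/barrier argument closes: $N(T):=\sup_{T_0\le\tau\le T}(1+\tau)^{1/2}\|u(\tau)\|_\infty$ is finite (by $M_0<\infty$) and continuous, and the gain $t^{-\delta}$ makes the self-improvement sit below a barrier level once $T_0$ is large, giving $\|u(t)\|_\infty\le\const\,(1+t)^{-1/2}$; together with $\|u(t)\|_1=1$ and, in case \eqref{eq:con2}, the moment bound forcing $\supp u(t)$ onto scale $\sqrt t$, this also yields the matching lower bound $\|u(t)\|_\infty\gtrsim t^{-1/2}$.

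\medskip
\noindent The main obstacle in the decay part is the borderline $k=1-\tfrac1p$, resp.\ $k=2-\tfrac1p$, where $\delta=0$ and the Duhamel estimate is only marginal. There I would instead run a parabolic blow-down: the rescaled solutions $u_\lambda(s,y):=\lambda u(\lambda^2s,\lambda y)$ have mass $1$, are bounded in $L^\infty$ and tight on $s\ge1$ (using the a priori decay already obtained away from the borderline and the moment bound in case \eqref{eq:con2}), and the rescaled drifts stay bounded in $L^\infty_tL^{p,\infty}_x$ at the borderline; one extracts an $L^1_{\loc}$ subsequential limit, which solves the scale-invariant limit equation, and proves a Liouville-type rigidity — the only bounded-mass ancient solutions of that equation are multiples of the Gaussian — so that $\|u_\lambda(1)\|_\infty\to\const$, i.e.\ $\|u(t)\|_\infty=\const\,t^{-1/2}(1+o(1))$.

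\medskip
\noindent\textbf{Failure of decay.} Here I would exhibit explicit global bounded solutions that do not converge to $0$. In case \eqref{eq:con1} with $k<1-\tfrac1p$, pick $\beta$ with $\tfrac1p\le\beta<1-k$ (possible precisely because $k<1-\tfrac1p$), and let $b$ be odd, bounded near $0$, with $b(x)=-\sign(x)|x|^{-\beta}$ for $|x|\ge1$, so that $b\in L^{p,\infty}(\R)$ and \eqref{eq:con1}, \eqref{Equa:con1} hold. The stationary solution of $u'=b\,u^{k+1}$, namely
\[
u(x)=\Bigl[-k\Bigl(\int_0^{x}b+c\Bigr)\Bigr]^{-1/k},\qquad c<0,
\]
is positive, bounded, and — since $\int_0^{x}b\sim-|x|^{1-\beta}/(1-\beta)$ as $|x|\to\infty$ — decays like $|x|^{-(1-\beta)/k}$, hence lies in $L^1(\R)$ exactly because $\beta<1-k$; thus $u(t)\equiv u$ solves \eqref{m:eq:pde} and does not decay. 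In case \eqref{eq:con2} the moment requirement $E(u_0)<\infty$ combined with $b\in L^\infty$ excludes stationary counterexamples (a sufficiently fast-decaying $u$ forces $b=u'/u^{k+1}$ to be unbounded), so for the whole range $\tfrac12\le k<2-\tfrac1p$ I would build a genuinely time-dependent solution, along the lines of the finite-time blow-up constructions of Theorems \ref{prop:blowup} and \ref{prop:blowup:2}, by choosing a compressive drift that remains uniformly bounded with $b_x$ uniformly in $L^{p,\infty}$ but tuned so that the concentration of $u(t)$ takes infinite time; the resulting global solution grows up and in particular does not decay. Producing this last family, and checking it stays in the class \eqref{eq:con2}--\eqref{Equa:con2}, is the main obstacle of the converse.
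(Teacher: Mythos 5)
Your converse for case \eqref{eq:con1} is essentially the paper's own argument (a stationary profile solving $u_x=bu^{k+1}$, merely parametrized by $b$ instead of by $u$), but the rest of the proposal has genuine gaps. The main one is the decay direction: the Duhamel/barrier bootstrap cannot close under the stated hypotheses. Your self-improvement inequality has the form $N(T)\le C_1+M_0(1+2T_0)^{1/2}+C_2\|b\|\,T_0^{-\delta}N(T)^{k+1}$ with $\delta=\tfrac12\big(k-(1-\tfrac1p)\big)$, and the two-fixed-point (barrier) structure exists only if the starting level $M_0(1+T_0)^{1/2}$ lies below the barrier, which is of size $\big(C_2\|b\|\big)^{-1/k}T_0^{\delta/k}$ with $\delta/k=\tfrac12-\tfrac{1-1/p}{2k}<\tfrac12$ (equality only at $p=\infty$). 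Hence taking $T_0$ large makes the situation \emph{worse}, and the argument closes only under a smallness condition on $M_0$ and $\|b\|_{p,\infty}$ that the theorem does not assume: a Duhamel scheme treats the convection as a perturbation of the heat flow and extracts no smallness from a merely bounded supercritical solution. The paper's mechanism is different: it uses the assumed uniform bound to absorb the excess power of $u$ into the drift, $bu^{1+k}=\hat b\,u^{1-1/p}$ with $\|\hat b\|_{p,\infty}\le\|u\|_\infty^{k-1+1/p}\|b\|_{p,\infty}$ (analogously in the $b_x$ case), reducing to the critical exponent, which is then handled by the truncated-entropy/rescaling machinery of Section \ref{Ss:critical_case_integr} (Lemma \ref{lemma:entropty}, Lemma \ref{Lem:decay_L_2_case_1} and the iteration \eqref{Equa:guidolin}); there the truncation at level $\bar a$ together with unit mass supplies exactly the smallness that Duhamel lacks. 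Two further points in the decay part: in case \eqref{eq:con2} your bound $t^{-\frac12(k-1+\frac1p)}$ ``after the integration by parts'' is not justified, since moving $\partial_x$ off the kernel produces $G\star\big(b(u^{k+1})_x\big)$, a term involving $u_x$ that your ansatz does not control (in the paper the analogous integration by parts is performed inside an $L^2$ pairing against $v_{a,y}$, so no derivative of $u$ survives); and the critical case $k=1-\tfrac1p$ (resp.\ $2-\tfrac1p$), which is part of the statement, rests in your sketch on an unproven Liouville rigidity for the blow-down limit, whereas the paper has a quantitative proof (Theorem \ref{Theo:decay_bx_critical}).

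The converse in case \eqref{eq:con2} is left open by your own admission, so the second half of the statement is not proved. The paper's route is again stationary: $u(x)=(1+x^2)^{-\frac{2-1/p}{2k}}$ with $b(x)=-\frac{2-1/p}{k}\,x\,(1+x^2)^{-\frac1{2p}}$, which is unbounded but locally bounded with $b_x$ uniformly in $L^{p,\infty}$; condition \eqref{eq:con2} only requires $b\in L^\infty_{\loc}$, and the converse keeps only the uniform bound on $b_x$. Your observation that a globally bounded $b$ together with $E(u_0)<\infty$ excludes stationary profiles is correct (and it does expose that the paper's example does not satisfy \eqref{Equa:con2} literally, nor $E(u_0)<\infty$ when $k\ge\frac{2-1/p}{3}$), but by insisting on global boundedness of $b$ you imposed a stronger constraint than the one the paper actually uses, and you then do not supply the time-dependent construction that your reading would require; as it stands, that case of the theorem remains unproved in your proposal.
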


In particular, in the case $b_x \in L^\infty_{\text{loc}}((0,\infty), L^{\infty}(\R))$ we answer to a question raised in \cite{guidolin2022global}, precisely Question 3:
\begin{equation}
\label{long2}
\begin{split}
\text{``Is it possible to guarantee global existence for solutions of the problem (1) when $k\ge2$, $p = \infty$?"}
\end{split}
\end{equation}
The problem (1) referred above in \eqref{long2} and considered there is the PDE
\begin{equation}
\label{Equa:PDE_guidolin}
    \begin{cases}
        u_t + (b(x,t)u^{k+1})_x = \mu(t) u_{xx},\\
        u_0\in L^1(\R)\cap L^\infty,\ u_0\ge 0
    \end{cases}
\end{equation}
with $\mu$ strictly positive continuous function and $b$ is uniformly bounded and Lipschitz. The PDE \eqref{Equa:PDE_guidolin} and \eqref{m:eq:pde} are equivalent because of the following time transformation:
\begin{equation*}
u(t,x) = v(\tau(t),x), \quad \dot \tau(t) = \mu(t), 
\quad \tau(0) = 0,
\end{equation*}
which leads to the equation \eqref{m:eq:pde}, namely
\begin{equation*}
    v_t+(\tilde b(\tau,x)v^{k+1})_x=v_{xx},\quad \tilde b(\tau(t),x)=\frac{b(t,x)}{\mu(t)}.
\end{equation*}

The results of the above theorem can be summarized in the following table: setting
\begin{equation*}
     \mathbf{critic} (p)=
     \begin{cases}
         1-1/p  &b\in L^{p,\infty},\\
         2-1/p &b_x\in L^{p,\infty},
     \end{cases} 
 \end{equation*}
we have
\begin{table}[h]
\begin{tabular}
{l|c|c|c}
 & $k < \mathbf{critic} (p)$ & $k = \mathbf{critic} (p)$ & $k >\mathbf{critic} (p)$ \\
\hline
$\begin{matrix}b \in L^{p,\infty},\ {2<p\le\infty} \\
\text{or } b_x \in L^{p,\infty}, 1<p\le \infty
\end{matrix}$& $\begin{array}{c}
\text{global existence} \\ \text{no decay in general}
\end{array}$ & $\begin{array}{c} \text{global existence} \\ \text{and decay as $t^{-\frac{1}{2}}$} \end{array}$ & $\begin{array}{c}
\text{blow-up in finite time,} \\ \text{if bounded then decay as $t^{-\frac{1}{2}}$}
\end{array}$
\end{tabular}
\end{table}

\subsection{Structure of the paper}

This article is organized as follows.

In Section \ref{S:preliminaries} we introduce some definitions, notations, and well known results on Lorentz spaces (Section~\ref{sec:lorentz}): comparisons, embeddings, interpolations, H\"older's inequalities and convolutions estimates. Since we are using multiplication/convolutions operators and embedding, the Lorentz space setting more or less gives the same estimates as for $L^p$. We also recall a special case of Gagliardo-Niremberg inequality and prove a useful estimate on the heat kernel.

In Section \ref{s:loc:exist} we recall the local existence and uniqueness of the solutions via Duhamel's principle. The assumption $k \geq \frac{1}{2}$ and that $E(u_0) < \infty$ enters only in this section, and are needed if we let $b$ to be unbounded. The results in this section are standard, and independent on the main theorems of the paper.

The main idea of the paper is contained in Section \ref{s:glob}, where we deal with the global existence of the solutions. Differently from the approach of \cite{guidolin2022global}, we use a standard rescaling the solutions about the blow-up point at time $T$, Section \ref{Ss:realed_variables}. By means of energy estimates for the truncated solution (Lemma \ref{lemma:entropty}) and Gagliardo-Niremberg inequality (Lemma \ref{lemma:entropty2}), we show that in the new variables $(\tau,y) \in \R^+ \times \R$ that the rescaled solution decays the $L^2$-norm as the Heat kernel $\tau^{-\frac{1}{4}}$ (Lemma \ref{Lem:decay_L_2_case_1}). This fact will lead to a uniform estimate on the $L^2$-norm of the original solution $u$ (Corollary \ref{Cor:bound_u_2}), and by adapting the estimate \cite[Theorem 3.8]{guidolin2022global} to our case we deduce that $\|u\|_\infty$ is bounded, Theorem \ref{Theo:not_blow_up} of Section \ref{Ss:bounds_L_infty}. This concludes the proof of the first part of Theorem \ref{Theo:existence_1}. \\
In Section \ref{Ss:critical_case_integr} we study the critical cases, and show that the fact that the norm of $b$ remains constant under rescaling leads to a uniform decay rate, Theorem \ref{Theo:decay_bx_critical}.

In Section \ref{s:blowup} we provide examples of solutions blowing up in finite time for $k$ above the critical value. The ideas are taken from \cite{toscani2012finite} and adapted to our situation. Theorem \ref{prop:blowup} corresponds to the second part of Theorem \ref{Theo:existence_1} for $k > 1 - \frac{1}{p}$ and $b(t) \in L^{p,\infty}$, while the other case $k > 2 - \frac{1}{p}$ and $b_x(t) \in L^{p,\infty}$ is in the statement of Theorem \ref{prop:blowup:2}. As observed already in \cite{toscani2012finite}, we notice in Remark \ref{Rem:blow_up_always} that the $L^1$-norm of the initial data cannot be too small, otherwise blow-up is not possible.

In Section \ref{s:long:behav} we discuss the long behavior of solutions, proving Theorem \ref{Theop:longtime_1}. The proof of the main result of this section, Theorem \ref{Theo:long_time_1}, gives examples of bounded solutions in the subcritical case, and by adapting the analysis of the decay for the critical case we obtain that the solution decay in the critical or supercritical case, if we assume that $u \in L^\infty_{t,x}$.

\medskip

\noindent {\bf Acknowledgment.} This research has been partially supported by the project PRIN 2020 "Nonlinear evolution PDEs, fluid dynamics and transport equations: theoretical foundations and applications".

\section{Preliminaries}
\label{S:preliminaries}

Given a function $f \in L^1_\loc(\R)$, and $\alpha \geq 0$ we define the $\alpha$-moment of $f$ as
    \[
    M_\alpha(f)=\int |x|^\alpha |f| \,dx.
    \] 
In particular we will use the notation 
$$
m(f) = M_0(f), \ \text{as the \textit{mass} of $f$,} \qquad \ E(f)=M_2(f), \ \text{as the \textit{energy} of $f$.}
$$

We will write
$$(f\wedge g)(x):=\min \{f(x),g(x)\},\qquad (f\vee g)(x):=\max \{f(x),g(x)\}.$$

The letter $C$ will be a constant that could change line by line, also we use the symbol $a\lesssim b$ as shorthand for $a\le C b$ for some constant $C$. We will write $a\simeq b$ if both $a \lesssim b$ and  $b\lesssim a$ hold.

The symbol $\star$ will denote the convolution operation:
\begin{equation*}
    (f \star g)(x) = \int f(y) g(x-y) dy.
\end{equation*}

\subsection{Lorentz spaces}
\label{sec:lorentz}

We briefly recall the definition and some results about Lorentz space, see \cite{o1963convolution}.

Let $f^*$ be the symmetric decreasing rearrangement defined by
\begin{equation*}
    f^{\ast}(x) = \inf \Big\{\alpha>0: \mathcal L^1\big(\big\{|f|>\alpha\big\}\big)|\leq x\Big\},
\end{equation*}
and let $f^{**}(x)$ be the function defined by
\begin{equation*}
    f^{**}(x) = \frac{1}{x} \int_0^x f^*(y) dy.
\end{equation*}
Define 
\begin{equation}
\label{Equa:Lorentz_1}
\|f\|_{p,q} = \begin{cases} 
\left( \displaystyle \int_0^{\infty} \big[ x^{\frac{1}{p}} f^{\ast \ast}(x) \big]^q \, \frac{dx}{x} \right)^{\frac{1}{q}} & q \in [1, \infty), p \in (1,\infty) \\
\sup\limits_{x > 0} \, x^{\frac{1}{p}}  f^{\ast\ast}(x)   & q = \infty, p \in [1,\infty].
\end{cases}
\end{equation}
Note that
\begin{equation*}
    \|f\|_{1,\infty} = \|f^*\|_1 = \|f\|_1, \quad \|f\|_{\infty,\infty} = \lim_{x \searrow 0} f^{**}(x) = \|f\|_\infty.
\end{equation*}

By Hardy's inequality
\begin{equation*}
    \bigg[ \int_0^\infty \bigg( x^{1/p-1} \int_0^x |f(t)| dt \bigg)^q \frac{dx}{x} \bigg]^{1/q} \leq p' \bigg[ \int_0^\infty \big( x^{1/p} |f(x)| \big)^q \frac{dx}{x} \bigg]^{1/q}, \quad \frac{1}{p} + \frac{1}{p'} = 1, \ p \in (1,\infty),
\end{equation*}
and some fairly easy computations, one can verify that for $1 < p < \infty$ the above definition is equivalent to
\begin{equation}
\label{Equa:Lorentz_2}
|||f|||_{p,q} = \begin{cases}
\left( \displaystyle \int_0^{\infty} \big[ x^{\frac{1}{p}} f^{*}(x) \big]^q \, \frac{dx}{x} \right)^{\frac{1}{q}} & q \in [1, \infty), p \in [1,\infty), \\
\sup\limits_{x > 0} \, x^{\frac{1}{p}}  f^{*}(x)   & q = \infty, p \in [1,\infty].
\end{cases}
\end{equation}
For $p=q = \infty$ the equivalence is elementary. For $p = 1, q = \infty$ the quantity in \eqref{Equa:Lorentz_2} is the weak-$L^1$ norm, while \eqref{Equa:Lorentz_1} corresponds to the $L^1$-space: the $L^1$-norm in the above definition is realized for $p=q=1$. It is elementary to deduce that for $p \in (1,\infty)$
\begin{equation}
\label{Equa:estima_scal}
    \|f^k\|_{p,q} \leq p' |||f^k|||_{p,q} = p' |||f|||_{kp,kq}^k \leq p' \|f\|_{kp,kq}^k. 
\end{equation}
It is immediate to check that
$$
\|f^k\|_{\infty,\infty} = \|f^k\|_\infty = \|f\|_\infty^k = \|f\|_{\infty,\infty}^k.
$$

\begin{definition}[Lorentz space]
The \emph{Lorentz space $L^{p,q}$} is the space of (equivalence classes of) measurable functions $f$ such that $\|f\|_{p,q} < \infty$. It is a Banach space with the norm $\|\cdot\|_{p,q}$.
\end{definition}
 
We will use the following results.

\begin{proposition}[{\cite[Lemma 2.2]{o1963convolution}}]
\label{prop:equiv_L_p}
Let $1 < p<\infty$, then
$$
\|f\|_p \leq \|f\|_{p,p} \leq p' \|f\|_p, \qquad \text{where} \quad \frac{1}{p} + \frac{1}{p'} = 1.
$$
\end{proposition}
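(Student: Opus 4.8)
The statement to prove is Proposition~\ref{prop:equiv_L_p}: for $1 < p < \infty$,
\[
\|f\|_p \leq \|f\|_{p,p} \leq p' \|f\|_p.
\]

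The plan is to unwind the definition of $\|f\|_{p,p}$ in terms of the maximal rearrangement $f^{**}$ and compare it with the $L^p$-norm via the equivalent expression $|||f|||_{p,p}$ built from $f^*$. First I would recall the two elementary facts about rearrangements: the equimeasurability $\|f\|_p = \|f^*\|_p = \bigl(\int_0^\infty f^*(x)^p\,dx\bigr)^{1/p}$, and, for the exponent $q = p$, the identity
\[
|||f|||_{p,p}^p = \int_0^\infty \bigl[x^{1/p} f^*(x)\bigr]^p \frac{dx}{x} = \int_0^\infty f^*(x)^p\,dx = \|f\|_p^p,
\]
since $x^{1/p \cdot p} \cdot x^{-1} = 1$. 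Thus $|||f|||_{p,p} = \|f\|_p$ exactly — the diagonal Lorentz quasi-norm built from $f^*$ coincides with the Lebesgue norm.

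Next I would compare $\|\cdot\|_{p,p}$ (built from $f^{**}$) with $|||\cdot|||_{p,p}$ (built from $f^*$). Since $f^*$ is nonincreasing, one always has $f^*(x) \leq f^{**}(x)$ pointwise, giving immediately
\[
\|f\|_p = |||f|||_{p,p} \leq \|f\|_{p,p},
\]
which is the left inequality. For the right inequality, the key tool is exactly the Hardy inequality already displayed in the excerpt just before \eqref{Equa:Lorentz_2}: applied with $q = p$ to the function $f^*$ (noting $f^{**}(x) = \frac1x\int_0^x f^*(t)\,dt$), it yields
\[
\Bigl(\int_0^\infty \bigl[x^{1/p} f^{**}(x)\bigr]^p \frac{dx}{x}\Bigr)^{1/p} \leq p' \Bigl(\int_0^\infty \bigl[x^{1/p} f^*(x)\bigr]^p \frac{dx}{x}\Bigr)^{1/p},
\]
i.e. $\|f\|_{p,p} \leq p' |||f|||_{p,p} = p' \|f\|_p$. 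This closes the chain.

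The only mild subtlety — hardly an obstacle — is checking that the Hardy inequality as stated (with the kernel $x^{1/p-1}\int_0^x |f(t)|\,dt$ and majorant $x^{1/p}|f(x)|$) applies verbatim with $f^*$ in place of $|f|$; this is immediate since $f^*$ is itself a nonnegative measurable function and the inequality holds for all such functions. One should also note that if $f \notin L^p$ both sides are infinite and the statement is vacuous, so one may assume $\|f\|_p < \infty$ throughout. Everything else is the bookkeeping of substituting $q = p$ into the defining integrals, which is routine.
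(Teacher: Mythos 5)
Your proof is correct: the identity $|||f|||_{p,p}=\|f\|_p$ by equimeasurability, the pointwise bound $f^*\leq f^{**}$ for the left inequality, and Hardy's inequality with $q=p$ for the right inequality are exactly the standard argument. The paper does not write out a proof (it cites \cite[Lemma 2.2]{o1963convolution}), but the Hardy-inequality route it sketches just before defining $|||\cdot|||_{p,q}$ is precisely the one you follow, so there is nothing to add.
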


\begin{proposition} [{\cite[Lemma 2.5]{o1963convolution}}]\label{prop:inclusion_lorentz}
    Suppose $1 < p < \infty$ and $1\le q<r\le \infty$, then
    \begin{equation*}
        \|f\|_{p,r}\le \left(\frac{q}{p}\right)^{\frac{1}{q} - \frac{1}{r }} \|f\|_{p,q}.
    \end{equation*}  
\end{proposition}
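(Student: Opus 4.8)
The plan is to reduce everything to an elementary estimate for a single non-negative, non-increasing function on $(0,\infty)$, and then apply it to $g := f^{**}$. The crucial observation is that $f^{**}$ is itself non-increasing: it is the running average $\frac1x\int_0^x f^*$ of the non-increasing function $f^*$, and the running average of a non-increasing function is non-increasing. It therefore suffices to prove that for every non-negative non-increasing $g\colon(0,\infty)\to[0,\infty]$ one has
\begin{equation*}
\Big( \int_0^\infty \big[ x^{1/p} g(x) \big]^r \,\frac{dx}{x} \Big)^{1/r} \le \Big( \frac{q}{p} \Big)^{1/q - 1/r} \Big( \int_0^\infty \big[ x^{1/p} g(x) \big]^q \,\frac{dx}{x} \Big)^{1/q},
\end{equation*}
the left-hand side being read as $\sup_{x>0} x^{1/p} g(x)$ when $r=\infty$; taking $g=f^{**}$ then gives the proposition. (Working directly with $f^{**}$ rather than with $|||f|||_{p,q}$ and $f^*$ avoids introducing spurious factors of $p'$ in the constant.)

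First I would establish the pointwise bound
\begin{equation*}
x^{1/p} g(x) \le \Big( \frac{q}{p} \Big)^{1/q} \Big( \int_0^\infty \big[ t^{1/p} g(t) \big]^q \,\frac{dt}{t} \Big)^{1/q} =: \Big(\frac{q}{p}\Big)^{1/q} A \qquad \text{for all } x > 0.
\end{equation*}
This holds because, using the monotonicity $g(t)\ge g(x)$ for $t\le x$ together with $\int_0^x t^{q/p-1}\,dt=\tfrac{p}{q}\,x^{q/p}$ (where $q/p>0$ ensures integrability at $0$),
\begin{equation*}
x^{q/p} g(x)^q = \frac{q}{p} \int_0^x t^{q/p - 1} g(x)^q \, dt \le \frac{q}{p} \int_0^x t^{q/p-1} g(t)^q \, dt \le \frac{q}{p}\, A^q.
\end{equation*}
In particular this already settles the case $r=\infty$.

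For $r<\infty$ I would split $[x^{1/p} g(x)]^r = [x^{1/p}g(x)]^{r-q}\,[x^{1/p}g(x)]^q$, bound the first factor by the pointwise estimate just obtained (legitimate since $r-q>0$), and integrate:
\begin{equation*}
\int_0^\infty \big[ x^{1/p} g(x) \big]^r \,\frac{dx}{x} \le \Big( \frac{q}{p} \Big)^{(r-q)/q} A^{\,r-q} \int_0^\infty \big[ x^{1/p} g(x) \big]^q \,\frac{dx}{x} = \Big( \frac{q}{p} \Big)^{(r-q)/q} A^{\,r}.
\end{equation*}
Taking $r$-th roots and using the identity $\frac{r-q}{qr} = \frac{1}{q} - \frac{1}{r}$ gives the displayed inequality for $g$, hence for $f$. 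There is essentially no serious obstacle here; the only points requiring a little care are the verification that $f^{**}$ is non-increasing, the integrability of $t^{q/p-1}$ near $0$ (which holds since $p,q$ are finite and positive, so $q/p>0$), and the bookkeeping of the exponents.
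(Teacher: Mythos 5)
Your proof is correct and is essentially the standard argument for this nesting property — the one in the cited reference \cite[Lemma 2.5]{o1963convolution}, which is all the paper itself offers for this proposition. Your key pointwise bound $x^{1/p}f^{**}(x)\le (q/p)^{1/q}\|f\|_{p,q}$ (valid because $f^{**}$ is non-increasing) is exactly the estimate the paper records as \eqref{Equa:f**_estima} and reuses in Lemma \ref{Lem:triv_estim_p_p'}, and the splitting $[x^{1/p}f^{**}]^r=[x^{1/p}f^{**}]^{r-q}[x^{1/p}f^{**}]^q$ then gives the stated constant with no gaps.
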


Another elementary estimate we use is the following.

\begin{lemma}
\label{Lem:triv_estim_p_p'}
For $p_1 < p < p_2$ and $q,q_1,q_2 \geq 1$ it holds
\begin{equation*}
\|f\|_{p,q} \leq \bigg( \frac{1}{\frac{q}{p} - \frac{q}{p_2}} + \frac{1}{\frac{q}{p_1} - \frac{q}{p}} \bigg)^{\frac{1}{q}} \bigg( \frac{p}{q_1} \bigg)^{\frac{1}{q_2} \frac{\frac{1}{p} - \frac{1}{p_2}}{\frac{1}{p_1} - \frac{1}{p_2}}} \bigg( \frac{p}{q_2} \bigg)^{\frac{1}{q_1} \frac{\frac{1}{p_1} - \frac{1}{p}}{\frac{1}{p_1} - \frac{1}{p_2}}} \|f\|_{p_1,q_1}^{\frac{\frac{1}{p} - \frac{1}{p_2}}{\frac{1}{p_1} - \frac{1}{p_2}}} \|f\|_{p_2,q_2}^{\frac{\frac{1}{p_1} - \frac{1}{p}}{\frac{1}{p_1} - \frac{1}{p_2}}}.
\end{equation*}
\end{lemma}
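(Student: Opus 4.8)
The plan is to reduce the inequality to a one–variable statement about the non‑increasing function $g:=f^{**}$ and then interpolate by splitting a single integral at an optimized cut‑off. Recalling \eqref{Equa:Lorentz_1} and that $g$ is non‑increasing on $(0,\infty)$ (since $\tfrac{d}{dx}f^{**}=\tfrac1x(f^{*}-f^{**})\le0$), every norm occurring in the statement is a weighted Lebesgue norm of $g$:
\[
\|f\|_{r,s}^{s}=\int_0^\infty x^{s/r-1}g(x)^{s}\,dx\quad(1\le s<\infty),\qquad \|f\|_{r,\infty}=\sup_{x>0}x^{1/r}g(x).
\]
Hence it suffices to bound $\int_0^\infty x^{q/p-1}g(x)^q\,dx$ (the case $q=\infty$ is only easier, one then interpolating a supremum directly). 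The first step is a pointwise decay bound for $g$: for every $x_0>0$ and $i\in\{1,2\}$, monotonicity of $g$ gives
\[
\|f\|_{p_i,q_i}^{q_i}\ \ge\ \int_0^{x_0}x^{q_i/p_i-1}g(x)^{q_i}\,dx\ \ge\ g(x_0)^{q_i}\int_0^{x_0}x^{q_i/p_i-1}\,dx\ =\ \frac{p_i}{q_i}\,x_0^{q_i/p_i}g(x_0)^{q_i},
\]
so $x_0^{1/p_i}g(x_0)\le (q_i/p_i)^{1/q_i}\|f\|_{p_i,q_i}=:B_i$ for all $x_0$ (for $q_i=\infty$ this is just the definition of $\|f\|_{p_i,\infty}$). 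One may assume $0<B_1,B_2<\infty$, the cases $B_i\in\{0,\infty\}$ being trivial.

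Next I would split the target integral as $\int_0^\infty=\int_0^R+\int_R^\infty$ at a free parameter $R>0$, bounding $g(x)\le B_2\,x^{-1/p_2}$ on $(0,R)$ and $g(x)\le B_1\,x^{-1/p_1}$ on $(R,\infty)$, so that
\[
\int_0^\infty x^{q/p-1}g(x)^q\,dx\ \le\ B_2^{q}\!\int_0^R x^{\frac qp-\frac q{p_2}-1}\,dx+B_1^{q}\!\int_R^\infty x^{\frac qp-\frac q{p_1}-1}\,dx.
\]
The crucial point is that this crossed assignment — $B_2$ on the small scales, $B_1$ on the large scales — is the only one for which both power weights are integrable: near $0$ one needs $\tfrac qp-\tfrac q{p_2}>0$ and near $\infty$ one needs $\tfrac qp-\tfrac q{p_1}<0$, both of which hold precisely because $p_1<p<p_2$. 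Evaluating the two elementary integrals gives, with $\alpha:=\tfrac qp-\tfrac q{p_2}>0$ and $\beta:=\tfrac q{p_1}-\tfrac qp>0$,
\[
\|f\|_{p,q}^q\ \le\ \frac{B_2^{q}}{\alpha}\,R^{\alpha}+\frac{B_1^{q}}{\beta}\,R^{-\beta}.
\]

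Finally I would minimize the right–hand side over $R>0$: the minimizer is $R_\ast=(B_1/B_2)^{q/(\alpha+\beta)}$ and the minimum value is $\bigl(\tfrac1\alpha+\tfrac1\beta\bigr)B_1^{q\theta}B_2^{q(1-\theta)}$, where
\[
\theta=\frac{\alpha}{\alpha+\beta}=\frac{\tfrac1p-\tfrac1{p_2}}{\tfrac1{p_1}-\tfrac1{p_2}},\qquad 1-\theta=\frac{\beta}{\alpha+\beta}=\frac{\tfrac1{p_1}-\tfrac1p}{\tfrac1{p_1}-\tfrac1{p_2}}.
\]
Taking $q$‑th roots and inserting $B_i=(q_i/p_i)^{1/q_i}\|f\|_{p_i,q_i}$ produces an inequality of exactly the form in the statement, with prefactor $\bigl(\tfrac{1}{q/p-q/p_2}+\tfrac{1}{q/p_1-q/p}\bigr)^{1/q}$ times the two index‑dependent constants coming from the pointwise bounds; the precise value of those (non‑sharp) constants is immaterial, and if one wants exactly the shuffling of the indices $q_1,q_2$ written above one can additionally invoke the inclusion Proposition~\ref{prop:inclusion_lorentz} to trade one secondary index for another. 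The whole computation is routine calculus; the only real points of care are (i) the crossed assignment of the two pointwise bounds, which is forced by the integrability of the power weights and hence by the ordering $p_1<p<p_2$, and (ii) the degenerate endpoints $p_1=1$, $p_2=\infty$, or some $q_i=\infty$, where the relevant integrals collapse to suprema but the same bookkeeping goes through. I do not anticipate any substantial obstacle.
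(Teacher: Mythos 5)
Your proof is correct and follows essentially the same route as the paper: the pointwise bound $x^{1/p_i}f^{**}(x)\le (q_i/p_i)^{1/q_i}\|f\|_{p_i,q_i}$ from monotonicity of $f^{**}$, the split of $\int_0^\infty$ at a cutoff with the crossed assignment of the $p_2$-bound near $0$ and the $p_1$-bound near $\infty$, and the optimization over the cutoff are exactly the steps in the paper's proof (its estimate \eqref{Equa:f**_estima} is your pointwise bound). The non-sharp constants you obtain differ slightly in bookkeeping from those displayed in the statement, but so do the paper's own, so this is immaterial.
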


\begin{proof}
The fundamental estimate is that, being $f^{**}$ decreasing, then
\begin{equation}
\label{Equa:f**_estima}
    \frac{q}{p} \bar x^{\frac{q}{p}} (f^{**}(\bar x))^q \leq \int_0^{\bar x} \big[ x^{\frac{1}{p}} f^{**}(x) \big]^q \frac{dx}{x} \leq \|f\|_{p,q}^q.
\end{equation}
Hence we can write for $q< \infty$
\begin{equation*}
    \begin{split}
        \|f\|^q_{p,q} &= \bigg[ \int_0^{\bar x} + \int_{\bar x}^\infty \bigg] \big[  x^{\frac{1}{p}} f^{**}(x) \big]^q \frac{dx}{x} \\
\big[ \text{(\ref{Equa:f**_estima}) for $p_1,p_2$} \big] \quad       &\leq \frac{1}{\frac{q}{p} - \frac{q}{p_2}} \bigg( \frac{p}{q_2} \bigg)^{\frac{q}{q_2}} \|f\|^q_{p_2,q_2} \bar x^{\frac{q}{p} - \frac{q}{p_2}} + \frac{1}{\frac{q}{p_1} - \frac{q}{p}} \bigg( \frac{p}{q_1} \bigg)^{\frac{q}{q_1}} \|f\|^q_{p_1,q_1} x^{\frac{q}{p} - \frac{q}{p_1}}.
    \end{split}
\end{equation*}
One can directly check that the same estimate holds for $q = \infty$ as 
\begin{equation*}
    \|f\|_{p,\infty} \leq  \bigg( \frac{p}{q_2} \bigg)^{\frac{1}{q_2}} \|f\|_{p_2,q_2} \bar x^{\frac{1}{p} - \frac{1}{p_2}} + \bigg( \frac{p}{q_1} \bigg)^{\frac{1}{q_1}} \|f\|_{p_1,q_1} x^{\frac{1}{p} - \frac{1}{p_1}},
\end{equation*}
and similarly for $q_1 = \infty$ and/or $q_2 = \infty$.

Optimizing w.r.t. to $\bar x$,
\begin{equation*}
    \bar x^{\frac{1}{p_1} - \frac{1}{p_2}} = \frac{\big(\frac{p}{q_1}\big)^{\frac{1}{q_1}} \|f\|_{p_1,q_1}}{\big(\frac{p}{q_2}\big)^{\frac{1}{q_2}} \|f\|_{p_2,q_2}},
\end{equation*}
one obtains the statement.
\end{proof}

\begin{theorem}[H\"older’s inequality in Lorentz spaces, {\cite[Theorem 3.4, Theorem 3.5]{o1963convolution}}]
\label{Theo:holder_lorentz}
If $1 < p, p_1, p_2 < \infty$ and
$1 \le q,q_1, q_2 \leq \infty$ satisfy
$$
\frac 1 p = \frac 1 {p_1} +\frac 1 {p_2}, \quad  \frac 1 q \le  \frac 1 {q_1} + \frac 1 {q_2},
$$
then
$$
\|fg\|_{p,q} {\leq p'} \|f\|_{p_1,q_1} \|g\|_{p_2,q_2}.
$$
where $1/p + 1/p' = 1$.

If
\begin{equation*}
    1 = \frac{1}{p_1} + \frac{1}{p_2}, \qquad 1 \leq \frac{1}{q_1} + \frac{1}{q_2},
\end{equation*}
then
\begin{equation*}
    \|fg\|_1 \leq \|f\|_{p_1,q_1} \|g\|_{p_2,g_2}.
\end{equation*}
\end{theorem}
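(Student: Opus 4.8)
The plan is to reproduce O'Neil's argument: the whole statement reduces to one rearrangement inequality followed by an elementary one-dimensional Hardy estimate. The starting point is the bound
\begin{equation*}
\int_0^t (fg)^*(s)\,ds \le \int_0^t f^*(s)\,g^*(s)\,ds \qquad \text{for all } t>0, \tag{$\ast$}
\end{equation*}
which I would deduce from the classical Hardy--Littlewood rearrangement inequality: for any measurable set $E$ with $|E|=t$ one has $\int_E|fg|=\int|f|\cdot|g\1_E|\le\int_0^\infty f^*(s)\,(g\1_E)^*(s)\,ds$, and since $(g\1_E)^*\le g^*$ pointwise and vanishes on $(t,\infty)$, this is at most $\int_0^t f^*g^*$; taking the supremum over such $E$ and using the identity $\int_0^t(fg)^*(s)\,ds=\sup\{\int_E|fg|:|E|=t\}$ gives $(\ast)$.

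Next I would set $A(s)=s^{1/p_1}f^*(s)$ and $B(s)=s^{1/p_2}g^*(s)$, so that (with $L^{q_1}(dt/t)$ denoting $L^{q_1}$ on $(0,\infty)$ with the measure $dt/t$) one has $\|A\|_{L^{q_1}(dt/t)}=|||f|||_{p_1,q_1}$ and similarly for $B$, recalling that $|||f|||_{p_1,q_1}\le\|f\|_{p_1,q_1}$ because $f^*\le f^{**}$. Using $\tfrac1p=\tfrac1{p_1}+\tfrac1{p_2}$, inequality $(\ast)$ rewrites as $t^{1/p}(fg)^{**}(t)=t^{1/p-1}\!\int_0^t(fg)^*\le t^{1/p-1}\!\int_0^t A(s)B(s)\,s^{-1/p}\,ds$, and the substitution $s=tu$ turns the right-hand side into $\int_0^1 A(tu)B(tu)\,u^{-1/p}\,du$. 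Taking the $L^q(dt/t)$-norm in $t$, and applying Minkowski's integral inequality together with the scale invariance of $dt/t$ (so that $\|t\mapsto A(tu)B(tu)\|_{L^q(dt/t)}=\|AB\|_{L^q(dt/t)}$ for each fixed $u$), gives
\begin{equation*}
\|fg\|_{p,q}\le\Big(\int_0^1 u^{-1/p}\,du\Big)\|AB\|_{L^q(dt/t)}=p'\,\|AB\|_{L^q(dt/t)},
\end{equation*}
and this is exactly where the constant $p'=\tfrac{p}{p-1}=\int_0^1 u^{-1/p}\,du$ (finite since $p>1$) is produced; for $q=\infty$ the integral norm is replaced by a supremum and the same computation applies. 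It then remains to control $\|AB\|_{L^q(dt/t)}$: when $\tfrac1q=\tfrac1{q_1}+\tfrac1{q_2}$, H\"older's inequality for $dt/t$ gives $\|AB\|_{L^q(dt/t)}\le\|A\|_{L^{q_1}(dt/t)}\|B\|_{L^{q_2}(dt/t)}=|||f|||_{p_1,q_1}|||g|||_{p_2,q_2}\le\|f\|_{p_1,q_1}\|g\|_{p_2,q_2}$, which proves the inequality in this case; the general constraint $\tfrac1q\le\tfrac1{q_1}+\tfrac1{q_2}$ is recovered from the equality case by the embedding $L^{p,\bar q}\hookrightarrow L^{p,q}$ of Proposition~\ref{prop:inclusion_lorentz}, and the combinations where some second exponent equals $\infty$ are handled by the obvious modification (integrals become suprema).

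For the endpoint $1=\tfrac1{p_1}+\tfrac1{p_2}$ the Hardy step is unnecessary: $(\ast)$ directly yields $\|fg\|_1=\int|fg|\le\int_0^\infty(fg)^*\le\int_0^\infty f^*g^*=\int_0^\infty A(s)B(s)\,\tfrac{ds}{s}$, and H\"older on $L^{q_1}(ds/s)\times L^{q_2}(ds/s)$ (reducing to $\tfrac1{q_1}+\tfrac1{q_2}=1$ via Proposition~\ref{prop:inclusion_lorentz} if needed), together with $f^*\le f^{**}$, concludes. The only genuinely substantive ingredient is the rearrangement inequality $(\ast)$; everything afterwards is bookkeeping with weighted one-dimensional $L^q$-norms, and the two points requiring care are locating the factor $p'$ and treating the various finite/infinite combinations of the second exponents uniformly.
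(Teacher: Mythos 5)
The paper itself does not prove Theorem \ref{Theo:holder_lorentz}: it is imported verbatim from O'Neil (Theorems 3.4--3.5 of the cited reference), so there is no internal argument to compare with; your reconstruction follows precisely the classical route. The core of what you wrote is correct: the rearrangement inequality $\int_0^t (fg)^*\le\int_0^t f^*g^*$, the substitution $s=tu$ combined with Minkowski's integral inequality (this is the Hardy step, and it is indeed the only place where the factor $p'=\int_0^1 u^{-1/p}\,du$ is produced), and H\"older in $L^q(dt/t)$ give the stated bound whenever $\tfrac1q=\tfrac1{q_1}+\tfrac1{q_2}\le 1$, and your treatment of the $L^1$ endpoint with $\tfrac1{q_1}+\tfrac1{q_2}=1$ is also complete.

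The step that does not work as written is the reduction of the remaining cases to the equality case ``by Proposition \ref{prop:inclusion_lorentz}''. First, when $\tfrac1{q_1}+\tfrac1{q_2}>1$ the intermediate exponent $\bar q$ with $\tfrac1{\bar q}=\tfrac1{q_1}+\tfrac1{q_2}$ satisfies $\bar q<1$, which is outside both the paper's definition of $\|\cdot\|_{p,q}$ and the hypotheses $1\le q<r$ of Proposition \ref{prop:inclusion_lorentz}. Second, even when $\bar q\ge1$, that embedding carries the constant $(\bar q/p)^{1/\bar q-1/q}$, which exceeds $1$ whenever $\bar q>p$, so your argument only yields $\|fg\|_{p,q}\le C(p,q_1,q_2)\,p'\,\|f\|_{p_1,q_1}\|g\|_{p_2,q_2}$, not the stated constant $p'$. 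The same issue hides in the ``obvious modification'' for $q=\infty$ with $q_1,q_2<\infty$: the inequality $\|AB\|_{L^\infty(dt/t)}\le\|A\|_{L^{q_1}(dt/t)}\|B\|_{L^{q_2}(dt/t)}$ is false for general functions (e.g. $A=B=\1_{[1,\sqrt e]}$, $q_1=q_2=2$), and using the monotone structure of $A,B$ again costs a constant. If a constant depending only on the exponents suffices --- and that is all this paper ever uses, since every application absorbs it into $C$ --- your proof is fine once this is acknowledged. To actually recover the constant $p'$ one should argue directly rather than by nesting: for instance, for $q=\infty$ apply the three-exponent H\"older inequality on $(0,t)$ to $\int_0^t A(s)B(s)\,s^{1/p'}\,\tfrac{ds}{s}$ with exponents $q_1,q_2,r$, $\tfrac1r=1-\tfrac1{q_1}-\tfrac1{q_2}$, which gives the constant $(p'/r)^{1/r}\le p'$; similar direct splittings handle the other combinations with strict inequality in the second exponents.
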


\begin{corollary}
    \label{Cor:bound_b_infty}
    If $f_x \in L^{p,\infty}$, $p > 1$, then
    \begin{equation*}
        |f(x) - f(x')| \leq p' \|f\|_{p,\infty} |x - x'|^{ \frac{1}{p'}}.
    \end{equation*}
\end{corollary}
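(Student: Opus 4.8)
The plan is to deduce this Morrey-type embedding directly from the fundamental theorem of calculus and the definition of the weak-$L^p$ norm (with $\|f\|_{p,\infty}$ in the statement read as $\|f_x\|_{p,\infty}$, which is the quantity the hypothesis controls). Fix $x' < x$ and put $I := (x',x)$, so $\mathcal L^1(I) = |x-x'|$. The first thing to check is that $f_x \in L^{p,\infty}$ with $p>1$ implies $f_x \in L^1(J)$ for every bounded interval $J$ — this is exactly the integrability that the estimate below quantifies — so that $f$ has an absolutely continuous representative and $f(x)-f(x') = \int_{x'}^x f_x(s)\,ds$.

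The core of the argument is then the chain
\begin{equation*}
|f(x)-f(x')| \leq \int_I |f_x(s)|\,ds \leq \int_0^{|x-x'|} (f_x)^*(s)\,ds \leq |||f_x|||_{p,\infty}\int_0^{|x-x'|} s^{-1/p}\,ds = p'\,|||f_x|||_{p,\infty}\,|x-x'|^{1/p'},
\end{equation*}
where $1/p+1/p'=1$. Here the second inequality is the elementary property of the decreasing rearrangement that the integral of $|f_x|$ over a set of measure $|x-x'|$ is at most $\int_0^{|x-x'|}(f_x)^*$; the third uses the pointwise bound $(f_x)^*(s)\le s^{-1/p}|||f_x|||_{p,\infty}$ coming straight from the $q=\infty$ line of \eqref{Equa:Lorentz_2}; and the integral converges because $1/p<1$.

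To finish I would replace $|||f_x|||_{p,\infty}$ by $\|f_x\|_{p,\infty}$, which only enlarges the right-hand side since $(f_x)^*\le (f_x)^{**}$ pointwise (the rearrangement being nonincreasing), giving the claimed inequality. I do not expect a genuine obstacle here: the only delicate point is the very first step, legitimizing the absolutely continuous representative, and this is automatic because the displayed chain, applied on any bounded interval, already proves $L^{p,\infty}\hookrightarrow L^1_{\loc}$ for $p>1$. A slightly different but equivalent route would bound $\int_I|f_x| = \int|f_x|\,\1_I$ by the second part of H\"older's inequality in Lorentz spaces (Theorem \ref{Theo:holder_lorentz}) with exponents $(p,\infty)$ and $(p',1)$, after computing $\|\1_I\|_{p',1}$; this produces the same H\"older exponent $\tfrac1{p'}$ with a larger explicit constant.
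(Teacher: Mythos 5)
Your argument is correct and is essentially the paper's proof: the paper writes $f(x)-f(x')=\int f_x\,\1_{(x',x)}$ and applies the $L^1$ case of H\"older's inequality in Lorentz spaces with $\|\1_{(x',x)}\|_{p',1}=p'|x-x'|^{1/p'}$, which is exactly the computation your rearrangement chain carries out by hand (and which you also note as the ``equivalent route''). Your reading of $\|f\|_{p,\infty}$ as $\|f_x\|_{p,\infty}$ matches the paper's intent, and your chain even covers $p=\infty$ directly, where the paper simply invokes Lipschitz regularity.
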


\begin{proof}
For $p= \infty$ the estimate is just the Lipschitz regularity of $f$.

We have by the last formula of Theorem \ref{Theo:holder_lorentz}
\begin{equation*}
    \begin{split}
        |f(x)| &= \bigg| \int f_x(x') \ind_{(0,x)} dx' \bigg| \\
        &\leq \|f_x\|_{p,\infty} \|\ind_{(0,x)}\|_{1 - \frac{1}{p},1} = \|f_x\|_{p,\infty} p'|x|^{\frac{1}{p'}}. \qedhere
    \end{split}
\end{equation*}
\end{proof}

\begin{theorem}[{\cite[Theorem 2.6]{o1963convolution}}]
\label{Theo:young_lorentz}
If $1 <  p, p_1, p_2 < \infty$ and
$1 \le q_1, q_2, q \le \infty$ satisfy
$$
\frac{1}{p} +1 =\frac{1}{p_1} + \frac{1}{p_2}, \quad \frac{1}{q} \le \frac{1}{q_1} + \frac{1}{q_2},
$$
then
$$
\|f\star g\|_{p,q} \leq 3p \|f\|_{p_1,q_1} \|g\|_{p_2,q_2}.
$$
\end{theorem}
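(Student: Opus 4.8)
The plan is to deduce the statement from O'Neil's pointwise rearrangement inequality for convolutions, together with elementary one–variable Hardy and H\"older estimates. First I would reduce to $f,g\ge 0$ using $|f\star g|\le|f|\star|g|$ (the Lorentz norms see only $f^{*}=|f|^{*}$, $g^{*}=|g|^{*}$), and further, by monotone approximation, to $f,g$ bounded with compact support, so that all quantities below are finite. The core step is then to prove
\begin{equation}
\label{Equa:ONeil_key}
(f\star g)^{**}(t)\ \le\ t\,f^{**}(t)\,g^{**}(t)\ +\ \int_t^\infty f^{*}(s)\,g^{*}(s)\,ds ,\qquad t>0 ;
\end{equation}
everything after \eqref{Equa:ONeil_key} is mechanical.

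To prove \eqref{Equa:ONeil_key} I would fix $t>0$ and split at the heights $f^{*}(t),g^{*}(t)$: put $f_1:=(f-f^{*}(t))_+$, $f_0:=f\wedge f^{*}(t)$, and likewise $g_1,g_0$. Then $\|f_1\|_1=t\big(f^{**}(t)-f^{*}(t)\big)$, $\|f_0\|_\infty\le f^{*}(t)$, and $f_0^{*}(s)=\min\{f^{*}(s),f^{*}(t)\}$ — i.e. $f_0^{*}=f^{*}(t)$ on $(0,t)$ and $f_0^{*}=f^{*}$ on $(t,\infty)$ — with the analogous identities for $g$. Grouping $f\star g=(f_1\star g_1)+(f_0\star g+f_1\star g_0)$ and combining subadditivity of $h\mapsto h^{**}$ with the endpoint bounds $h^{**}(t)\le\|h\|_1/t$ and $h^{**}(t)\le\|h\|_\infty$, the elementary Young estimates $\|f_1\star g_1\|_1\le\|f_1\|_1\|g_1\|_1$ and $\|f_1\star g_0\|_\infty\le\|f_1\|_1\|g_0\|_\infty$, and the Hardy--Littlewood rearrangement inequality
\[
\|f_0\star g\|_\infty\le\int_0^\infty f_0^{*}(s)g^{*}(s)\,ds = t\,f^{*}(t)g^{**}(t)+\int_t^\infty f^{*}(s)g^{*}(s)\,ds ,
\]
one obtains (all rearrangements below evaluated at $t$)
\[
(f\star g)^{**}(t)\le t(f^{**}-f^{*})(g^{**}-g^{*})+t\,f^{*}g^{**}+t\,f^{**}g^{*}-t\,f^{*}g^{*}+\int_t^\infty f^{*}g^{*}\,ds ,
\]
and the first four terms sum exactly to $t\,f^{**}(t)g^{**}(t)$, which is \eqref{Equa:ONeil_key}.

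Granting \eqref{Equa:ONeil_key}, the Lorentz bound is routine. It suffices to treat the case $\tfrac1q=\tfrac1{q_1}+\tfrac1{q_2}$, the general inequality following by first enlarging $q_1,q_2$ via the embedding of Proposition~\ref{prop:inclusion_lorentz}. For the first term of \eqref{Equa:ONeil_key}, since $1+\tfrac1p=\tfrac1{p_1}+\tfrac1{p_2}$ we have $t^{1/p}\cdot t\,f^{**}(t)g^{**}(t)=\big(t^{1/p_1}f^{**}(t)\big)\big(t^{1/p_2}g^{**}(t)\big)$, and H\"older in $L^q(\tfrac{dt}{t})$ with exponents $q_1,q_2$ gives exactly $\|f\|_{p_1,q_1}\|g\|_{p_2,q_2}$. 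For the second term I would write $\int_t^\infty f^{*}(s)g^{*}(s)\,ds=\int_t^\infty\big(s\,f^{*}(s)g^{*}(s)\big)\tfrac{ds}{s}$ and apply the conjugate Hardy inequality (the reflection $x\mapsto 1/x$ of the Hardy inequality recalled in Subsection~\ref{sec:lorentz}), bounding $\big\|t^{1/p}\!\int_t^\infty(\cdot)\tfrac{ds}{s}\big\|_{L^q(dt/t)}$ by $C_p\big\|t^{1/p}\cdot t\,f^{*}(t)g^{*}(t)\big\|_{L^q(dt/t)}=C_p\big\|(t^{1/p_1}f^{*})(t^{1/p_2}g^{*})\big\|_{L^q(dt/t)}$; one further H\"older bounds this by $C_p\,|||f|||_{p_1,q_1}|||g|||_{p_2,q_2}\le C_p\,\|f\|_{p_1,q_1}\|g\|_{p_2,q_2}$, using $f^{*}\le f^{**}$. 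The cases $q_1=\infty$ and/or $q_2=\infty$ are handled by replacing the corresponding $L^q(\tfrac{dt}{t})$ norm by a supremum, exactly as in the proof of Lemma~\ref{Lem:triv_estim_p_p'}.

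The only genuinely non-routine point is \eqref{Equa:ONeil_key}; once it is in hand, the passage to Lorentz norms is bookkeeping. The other place demanding care is the explicit constant $3p$: to get it one must use the sharp forms of the two Hardy inequalities, of the embedding constant in Proposition~\ref{prop:inclusion_lorentz}, and of the equivalence $\|\cdot\|_{p,q}\simeq|||\cdot|||_{p,q}$, rather than the crude bounds sketched above.
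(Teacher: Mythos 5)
The paper does not actually prove this statement---it is quoted verbatim from O'Neil's paper---and your argument is essentially O'Neil's original one: the key estimate $(f\star g)^{**}(t)\le t\,f^{**}(t)\,g^{**}(t)+\int_t^\infty f^{*}(s)g^{*}(s)\,ds$ is his rearrangement lemma, and the passage to Lorentz norms via the dual Hardy inequality and H\"older is exactly how Theorem 2.6 is obtained there. Your truncation proof of the key inequality and the subsequent bookkeeping are correct; the only loose end, which you flag yourself, is the exact constant $3p$ (in the case $\frac1q=\frac1{q_1}+\frac1{q_2}$ your bound already gives $1+p\le 3p$, while the general case needs the care you mention with the embedding constants of Proposition~\ref{prop:inclusion_lorentz}).
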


We recall also Young's theorem on convolution on $L^p$-spaces:
\begin{equation*}
    \|f \ast g\|_p \leq \|f\|_{p_1} \|g\|_{p_2}, \quad \frac{1}{p} + 1 = \frac{1}{p_1} + \frac{1}{p_2},
\end{equation*}
which holds also in the case $p=1$ (with the constant $3p$ replaced by $1$). The case $p = \infty$ gives also \cite[Theorem 3.6]{o1963convolution}
\begin{equation*}
\begin{split}
    \|f \star g\|_{\infty} &\leq 
    \|f\|_{p,q} \|g\|_{p',q'},
\end{split}
\end{equation*}
with $1/p + 1/p' = 1$, $1/q + 1/q' \geq 1$. We will also use the following variant of the above inequality when $g \in L^\infty$,
\begin{equation*}
    \|f\star g\|_{p,q} \le \|f\|_{p,q} \|g\|_\infty.
\end{equation*}

\subsection{Gagliardo-Nirenberg inequality}

We recall the Gagliardo–Nirenberg interpolation inequality in 1-dimension.

\begin{proposition}[Gagliardo-Nirenberg interpolation inequality,\cite{nirenberg1959elliptic}]
\label{Prop:gagliardo_Lorentz}
Let $1 < q < p$, then
\begin{equation*}
    \|f \|_{p}\lesssim \|f_x\|^\theta_{2}\|f\|_{q}^{1-\theta}
\end{equation*}
 with
 \begin{equation*}
 \frac 1 p = - \frac{\theta}{2}+\frac {1-\theta} q  
 \end{equation*}
\end{proposition}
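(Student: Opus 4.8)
The plan is to reduce to smooth, compactly supported, nonnegative $f$ and then run the classical one–dimensional argument. By a truncation–and–mollification argument it suffices to prove the inequality for $f\in C^\infty_c(\R)$, since both $\|f_x\|_2$ and $\|f\|_q$ are finite by hypothesis and the regularizations only perturb the right–hand side by a factor tending to $1$. Moreover, replacing $f$ by $|f|$ does not increase $\|f_x\|_2$ (because $|(|f|)_x|\le |f_x|$ a.e.) and leaves $\|f\|_p,\|f\|_q$ unchanged, so we may assume $f\ge 0$.

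The core is a pointwise bound. Set $s=\tfrac q2+1\ge 1$. Since $f$ has compact support, for every $x$ one has the two representations
\begin{equation*}
 f(x)^s = s\int_{-\infty}^x f^{s-1}f_x\,dt = -\,s\int_x^{+\infty} f^{s-1}f_x\,dt,
\end{equation*}
and averaging them and passing to $|f_x|$ gives, by Cauchy--Schwarz and $2(s-1)=q$,
\begin{equation*}
 f(x)^s \le \frac s2\int_\R f^{s-1}|f_x|\,dt \le \frac s2\Bigl(\int_\R f^{2(s-1)}\,dt\Bigr)^{1/2}\|f_x\|_2 = \frac s2\,\|f\|_q^{q/2}\,\|f_x\|_2 .
\end{equation*}
Taking the supremum in $x$ and the $s$-th root yields the endpoint case $p=\infty$,
\begin{equation*}
 \|f\|_\infty \lesssim \|f\|_q^{\frac{q}{q+2}}\,\|f_x\|_2^{\frac{2}{q+2}},
\end{equation*}
which is exactly the claimed inequality with $\theta=\tfrac{2}{q+2}$.

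For $q<p<\infty$ I would interpolate the $L^p$ norm between $L^q$ and $L^\infty$ using the bound just obtained:
\begin{equation*}
 \|f\|_p^p = \int_\R f^{\,p-q}\,f^{\,q}\,dx \le \|f\|_\infty^{\,p-q}\,\|f\|_q^{\,q} \lesssim \|f\|_q^{\,q+(p-q)\frac{q}{q+2}}\;\|f_x\|_2^{\,(p-q)\frac{2}{q+2}} .
\end{equation*}
Taking the $p$-th root gives $\|f\|_p\lesssim \|f_x\|_2^{\theta}\|f\|_q^{1-\theta}$ with $\theta=\tfrac{2(p-q)}{p(q+2)}$, and a direct computation shows this is precisely the exponent solving $\tfrac1p=-\tfrac\theta2+\tfrac{1-\theta}{q}$, the remaining exponent being $1-\theta=\tfrac{q(p+2)}{p(q+2)}$, which matches $\tfrac{1}{p}\bigl(q+(p-q)\tfrac{q}{q+2}\bigr)$; the implicit constant depends only on $p,q$.

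The only genuinely delicate point is the density reduction to $C^\infty_c(\R)$: one must truncate $f$ in amplitude and in space, mollify, and check that $\|f_x\|_2$ and $\|f\|_q$ are not inflated in the limit — routine but where the hypotheses $f_x\in L^2$, $f\in L^q$ are actually used. Everything else is elementary (fundamental theorem of calculus, Cauchy--Schwarz, bookkeeping of exponents), so I do not expect a real obstacle; alternatively, the statement is classical and may simply be invoked from \cite{nirenberg1959elliptic}.
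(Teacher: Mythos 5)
Your proof is correct. The paper itself gives no argument for this proposition --- it is simply quoted from \cite{nirenberg1959elliptic} --- so there is nothing internal to compare against; what you have written is the standard elementary one-dimensional derivation, and it is sound. The pointwise bound $f(x)^s\le \tfrac s2\int f^{s-1}|f_x|$ with $s=\tfrac q2+1$, followed by Cauchy--Schwarz, gives the endpoint $\|f\|_\infty\lesssim \|f\|_q^{q/(q+2)}\|f_x\|_2^{2/(q+2)}$, which indeed corresponds to $\theta=\tfrac{2}{q+2}$ when $p=\infty$; the interpolation $\|f\|_p^p\le\|f\|_\infty^{p-q}\|f\|_q^q$ then yields $\theta=\tfrac{2(p-q)}{p(q+2)}$, and one checks directly that $-\tfrac\theta2+\tfrac{1-\theta}q=\tfrac1p$, as you did. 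The reduction to $C^\infty_c$ is the only point requiring care, and your sketch is adequate; alternatively one can avoid it by noting that $f_x\in L^2$ forces $f$ to be (a.e.\ equal to) a $C^{0,1/2}$ function, which together with $f\in L^q$ implies $f(x)\to0$ as $|x|\to\infty$, so the two fundamental-theorem representations hold directly.
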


\subsection{Heat kernel}

Recall that the heat kernel in $\R$ is
$$G(t,x)=\frac {1}{\sqrt{4\pi t}}e^{-{x^{2}/(4t)}}.$$ 
In the following we will use the estimate below. 

\begin{proposition}
\label{prop:appx}
It holds
    $\|G_x\|_{p,1}\le C p' t^{{1}/{2p}-1}$. 
\end{proposition}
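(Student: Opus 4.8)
\emph{Proof proposal.} The plan is to remove the time variable by parabolic scaling, reducing everything to $\|G_x(1,\cdot)\|_{p,1}$, and then to estimate this fixed Lorentz norm by interpolating the two elementary endpoint norms $\|G_x(1,\cdot)\|_1$ and $\|G_x(1,\cdot)\|_\infty$.

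First I would record how $\|\cdot\|_{p,q}$ behaves under the transformations relating $G(t,\cdot)$ to $G(1,\cdot)$. If $f_\lambda(x):=f(\lambda x)$ with $\lambda>0$, then $\mathcal L^1(\{|f_\lambda|>\alpha\})=\lambda^{-1}\mathcal L^1(\{|f|>\alpha\})$, hence $f_\lambda^{*}(s)=f^{*}(\lambda s)$, $f_\lambda^{**}(s)=f^{**}(\lambda s)$, and a change of variables in \eqref{Equa:Lorentz_1} gives $\|f_\lambda\|_{p,q}=\lambda^{-1/p}\|f\|_{p,q}$; also $\|cf\|_{p,q}=|c|\,\|f\|_{p,q}$ trivially. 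Since $G(t,x)=t^{-1/2}G(1,t^{-1/2}x)$, differentiating in $x$ yields $G_x(t,x)=t^{-1}\,G_x(1,t^{-1/2}x)$, whence
\[
\|G_x(t,\cdot)\|_{p,1}=t^{-1}\,(t^{-1/2})^{-1/p}\,\|G_x(1,\cdot)\|_{p,1}=t^{\frac{1}{2p}-1}\,\|G_x(1,\cdot)\|_{p,1}.
\]
So it suffices to show $\|G_x(1,\cdot)\|_{p,1}\lesssim p'$.

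Set $h:=G_x(1,\cdot)$, i.e. $h(x)=-\tfrac{x}{4\sqrt\pi}e^{-x^2/4}$. Elementary Gaussian computations give $\|h\|_{\infty}=\tfrac{e^{-1/2}}{2\sqrt{2\pi}}$ (the maximum of $|h|$, attained at $|x|=\sqrt2$) and $\|h\|_{1}=2\,G(1,0)=\pi^{-1/2}$ (using $\int_0^\infty xe^{-x^2/4}dx=2$ and that $h$ has a single sign change); in particular both are $<1$, so $h\in L^1\cap L^\infty$. Since $h^{**}$ is nonincreasing one has the pointwise bounds $h^{**}(s)=\frac1s\int_0^s h^{*}\le\|h\|_\infty$ and $h^{**}(s)\le\frac1s\|h^{*}\|_1=\|h\|_1/s$ (these are exactly the $L^{\infty,\infty}$ and $L^{1,\infty}$ bounds recorded after \eqref{Equa:Lorentz_1}). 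Splitting $\|h\|_{p,1}=\int_0^\infty s^{1/p-1}h^{**}(s)\,ds$ at the crossover $\bar s=\|h\|_1/\|h\|_\infty$,
\[
\|h\|_{p,1}\ \le\ \|h\|_\infty\!\int_0^{\bar s}\! s^{\frac1p-1}ds+\|h\|_1\!\int_{\bar s}^{\infty}\! s^{\frac1p-2}ds\ =\ p\,\|h\|_\infty\,\bar s^{1/p}+p'\,\|h\|_1\,\bar s^{1/p-1}\ =\ (p+p')\,\|h\|_1^{1/p}\,\|h\|_\infty^{1/p'},
\]
where the second integral converges because $p>1$ (this is precisely Lemma~\ref{Lem:triv_estim_p_p'} with $p_1=1$, $p_2=\infty$, $q=1$, $q_1=q_2=\infty$). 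As $\|h\|_1,\|h\|_\infty<1$, the factor $\|h\|_1^{1/p}\|h\|_\infty^{1/p'}$ is $\le1$, so $\|h\|_{p,1}\le p+p'=pp'$. Combining with the scaling identity, $\|G_x(t,\cdot)\|_{p,1}\le (p+p')\,t^{\frac1{2p}-1}$; on the bounded range of exponents actually used later (e.g. conjugates of $L^{p,\infty}$-exponents with $p>2$, which lie in $(1,2)$) one has $p+p'\lesssim p'$, which is the asserted bound.

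The only delicate point is the bookkeeping of the constant: one has to verify that the optimization over $\bar s$ produces a constant of order $p'$ (up to the bounded-range remark above) and that the two Gaussian norms of $h$ are genuinely $\le 1$, so that no hidden $p$-dependent factor sneaks in through $\|h\|_1^{1/p}\|h\|_\infty^{1/p'}$. Everything else — the scaling law for Lorentz norms and the two elementary integrals for $\|h\|_1$ and $\|h\|_\infty$ — is routine.
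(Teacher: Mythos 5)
Your argument is correct, and it reaches the same quantitative conclusion as the paper by a different route. The paper does not rescale: it dominates $|G_x(t)|$ pointwise by a plateau of height $C/t$ on $[-\sqrt{2t},\sqrt{2t}]$ plus a rapidly decaying tail, and then evaluates $\|G_x(t)\|_{p,1}$ directly through the rearrangement formula $\|f\|_{p,1}=p'\int_0^\infty f^*(y)\,y^{-1/p'}dy$ (the Fubini step that produces the factor $p'$). You instead factor out the time dependence first, via the exact scaling law $\|f(\lambda\,\cdot)\|_{p,q}=\lambda^{-1/p}\|f\|_{p,q}$ and $G_x(t,x)=t^{-1}G_x(1,t^{-1/2}x)$, and then estimate the single fixed norm $\|G_x(1,\cdot)\|_{p,1}$ by the $L^1$--$L^\infty$ splitting of $f^{**}$ at $\bar s=\|h\|_1/\|h\|_\infty$ (the $p_1=1$, $p_2=\infty$ case of Lemma~\ref{Lem:triv_estim_p_p'}). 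The two splittings are morally the same -- your crossover $\bar s$ is the scale $\sqrt t$ after undoing the rescaling -- but your version isolates the scaling identity, which is reusable elsewhere, and avoids having to describe the rearrangement of the Gaussian at time $t$.

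On the constant: your bound $(p+p')=pp'$ is in fact what a careful bookkeeping of the paper's own computation yields as well, since $\int_0^{2\sqrt{2t}}y^{-1/p'}dy=p\,(2\sqrt{2t})^{1/p}$ contributes a factor $p$ that is absorbed into the paper's ``$\simeq$''. So the discrepancy between $pp'$ and the stated $Cp'$ is not a defect of your proof but a feature of the statement itself; as you note, it is immaterial in all the places the proposition is invoked (the Lorentz exponent there is the conjugate $p'\in[1,2)$ of the drift exponent $p\in(2,\infty]$, so the extra factor is at most $2$), and indeed no absolute-constant bound of the form $Cp'$ can hold uniformly for large Lorentz exponent, since $\|G_x(1,\cdot)\|_{p,1}\gtrsim p$ as $p\to\infty$. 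Your verification that $\|G_x(1,\cdot)\|_1=\pi^{-1/2}$ and $\|G_x(1,\cdot)\|_\infty=e^{-1/2}/(2\sqrt{2\pi})$ are both below $1$, so that the interpolation factor $\|h\|_1^{1/p}\|h\|_\infty^{1/p'}$ cannot smuggle in any $p$-dependence, closes the only remaining gap.
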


\begin{proof}
We estimate 
\begin{equation*}
    |G_x(t)|\le \frac{C}{t}\mathbf 1_{[-\sqrt{2t},\sqrt{2t}]}(x)+\mathbf 1_{\R\setminus [-\sqrt{2t},\sqrt{2t}]}(x) |G_x(t)|, 
\end{equation*}
thus 
\begin{equation*}
\begin{split}
    \|G_x(t)\|_{p,1} &= \int \bigg[ x^{-1-1/p'} \int_0^x (G_x(t,y))^* dy \bigg] dx
    \simeq p' \int G^*_x(t,y) y^{-1/p'} dy \simeq p' t^{\frac{1}{2p} - 1}. 
    \qedhere
\end{split}
\end{equation*}
\end{proof}

\section{Local existence and uniqueness}
\label{s:loc:exist}

For the sake of completeness, in this section we prove some classical results of local existence and uniqueness for $L^\infty \cap L^1$-solutions of \eqref{m:eq:pde}. Define the integral operator $\Phi[u]$ by Duhamel formula
\begin{equation*}
\begin{split}
    u \mapsto \Phi[u](t)&=G(t)\ast u_0 + \int_0^t G_x(t-s) \ast (b(s)u^k(s))\,d s.
\end{split}
\end{equation*}
Notice that $\|u(t)\|_1 = \|u_0\|_1$, as required being the PDE in conservation form.

\begin{proposition}
\label{prop:loc:exs:1}
Let $b \in L^\infty_t L^{p,\infty}_x$, $k \geq 0$, $u_0 \in L^\infty \cap L^1(\R)$ and $\frac{1}{p} + \frac{1}{p'} = 1$. Then $u \mapsto \Phi[u]$ is a contraction in the set
$$
S = \left\{ u \in L^\infty( [0,\bar t] \times \R) \cap C([0,\bar t],L^{p'}(\R)): \|u\|_\infty \le r,\, u(0) = u_0 \right\},
$$
with $\bar t$ sufficiently small and $r \geq 2 \|u_0\|_\infty$. In particular, there is a unique bounded solution for \eqref{m:eq:pde}, and it belongs to the space $L^\infty((0,\bar t) \times \R) \cap C([0,\bar t], L^{p'}(\R))$.
\end{proposition}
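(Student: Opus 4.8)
The plan is to prove this via the Banach fixed point theorem, showing that $\Phi$ maps the complete metric space $S$ into itself and is a contraction there, provided $\bar t$ is chosen small depending on $\|u_0\|_\infty$, $\|u_0\|_1$, $r$, and $\|b\|_{L^\infty_t L^{p,\infty}_x}$. The natural norm for the contraction estimate is the mixed $L^\infty_t L^{p'}_x$ norm together with $L^\infty_{t,x}$; since $S$ is a closed subset of the Banach space $L^\infty([0,\bar t]\times\R)\cap C([0,\bar t],L^{p'}(\R))$, this suffices to conclude existence and uniqueness of a fixed point, which is the mild (Duhamel) solution of \eqref{m:eq:pde}.

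First I would bound the linear term $G(t)\ast u_0$: by Young's inequality $\|G(t)\ast u_0\|_\infty \le \|u_0\|_\infty$, so it stays in the ball of radius $r\ge 2\|u_0\|_\infty$ with room to spare, and $\|G(t)\ast u_0\|_{p'}\le \|G(t)\|_1\|u_0\|_{p'}$ with $\|u_0\|_{p'}<\infty$ since $u_0\in L^1\cap L^\infty$, giving continuity in $t$ with values in $L^{p'}$ by standard properties of the heat semigroup. Next, the Duhamel integral term: for $u\in S$ one writes $\|b(s)u^k(s)\|_{r_0}$ in a suitable Lorentz/Lebesgue space via Hölder (Theorem \ref{Theo:holder_lorentz}) — here $b(s)\in L^{p,\infty}$ and $u^k(s)\in L^\infty\cap L^{p'/k}$ (using $\|u^k\|_{p'}\le\|u\|_\infty^{k-1}\|u\|_{p'}\cdot(\text{interpolation})$ type bounds, or more simply estimate in two regimes) — and then applies the convolution estimate $\|G_x(t-s)\star h\|_{p'}\lesssim \|G_x(t-s)\|_{p,1}\|h\|_{(\cdot)}$ from Theorem \ref{Theo:young_lorentz}, combined with the heat kernel bound $\|G_x(t-s)\|_{p,1}\le Cp'(t-s)^{1/(2p)-1}$ from Proposition \ref{prop:appx}. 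The exponent $\frac{1}{2p}-1 > -1$ is integrable in $s$ near $s=t$, yielding a factor $\bar t^{1/(2p)}$ (or a positive power of $\bar t$) in front of the estimate; for $p=\infty$ one instead uses $\|G_x(t-s)\|_1 \lesssim (t-s)^{-1/2}$, still integrable. Thus the nonlinear term is controlled by $C\bar t^{\alpha}(\|b\|)(r^k + \ldots)$ with $\alpha>0$, which can be made $\le \tfrac12\|u_0\|_\infty$ (hence the image stays in the ball) by shrinking $\bar t$; continuity in $t$ with values in $L^{p'}$ follows from dominated convergence and the integrability of the kernel.

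For the contraction estimate, given $u,v\in S$ I would write $u^k-v^k = (u-v)\int_0^1 k(\lambda u+(1-\lambda)v)^{k-1}d\lambda$, so that $|u^k-v^k|\le k\,r^{k-1}|u-v|$ pointwise (using $\|u\|_\infty,\|v\|_\infty\le r$; if $0<k<1$ one uses instead $|u^k-v^k|\le |u-v|^k$ or splits, but since here $k\ge 0$ is general and $u,v$ bounded, the mean value form with $r^{k-1}$ works when $k\ge1$ and an elementary inequality handles $k<1$ — in all cases one gets a Lipschitz-type bound on bounded sets). Then the difference $\Phi[u]-\Phi[v] = \int_0^t G_x(t-s)\star\big(b(s)(u^k-v^k)(s)\big)ds$ is estimated exactly as above, now with $\|b(s)(u^k-v^k)(s)\|$ controlled via Hölder by $\|b(s)\|_{p,\infty}$ times $k r^{k-1}\|(u-v)(s)\|_{p'}$ (and the $L^\infty$ part similarly), producing $\|\Phi[u]-\Phi[v]\|\le C\bar t^\alpha \|b\|\, k r^{k-1}\|u-v\|$; choosing $\bar t$ smaller still makes the constant $<1$.

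The main obstacle is bookkeeping the Lorentz-space exponents so that Hölder and Young close consistently — in particular handling the $k$-th power $u^k$ when $k$ is not an integer and when $u$ is only in $L^\infty\cap L^{p'}$ rather than all $L^q$, and making sure the endpoint cases $p=\infty$ (Lipschitz $b$, no Lorentz machinery needed, just $L^1$ convolution bounds) and the borderline integrability of $(t-s)^{1/(2p)-1}$ are treated. There is also the minor point that one must verify $\Phi$ genuinely lands in $C([0,\bar t],L^{p'})$, not merely $L^\infty_t L^{p'}_x$, which follows from continuity of translations/the heat semigroup in $L^{p'}$ and dominated convergence for the Duhamel integral. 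Once the a priori estimates are in place, the fixed point and hence the unique bounded mild solution follow immediately, and standard parabolic regularity (not needed for the statement) upgrades it to a classical solution.
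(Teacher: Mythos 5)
Your proposal is correct and follows essentially the same route as the paper: Duhamel formulation, Banach fixed point on the ball $\|u\|_\infty\le r$, Lorentz-space H\"older and Young inequalities combined with the kernel bound of Proposition \ref{prop:appx} to produce a positive power of $\bar t$, and a Lipschitz bound on the nonlinearity (note the exponent is $1+k\ge 1$, so the mean-value estimate $|u^{1+k}-v^{1+k}|\le (1+k)r^{k}|u-v|$ needs no special treatment for small $k$). The only cosmetic difference is that the paper runs the contraction in $C([0,\bar t],L^{p',1})$ rather than $L^\infty_t L^{p'}_x$, which changes nothing since these norms are comparable for $1<p'<\infty$.
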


	We first prove that $t \mapsto \Phi[u](t)$ is continuous in $L^\infty$ when $u \in S$ and $t > 0$: this will be useful later on, and shows also the continuity of $t \mapsto \Phi[u](t)$ in every integral norm $\|\cdot\|_q$, $q \in [1,\infty]$ for $t > 0$.
	
\begin{lemma}
	\label{Lem:conti_integr}
	For every $u \in L^\infty_{t,x}((0,\bar t) \times \R)$ the function $t \mapsto \Phi[u](t)$ is continuous in $L^\infty(\R)$ for $t > 0$. 
\end{lemma}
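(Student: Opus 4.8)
The plan is to write $\Phi[u](t) = G(t)\ast u_0 + I[u](t)$ where $I[u](t) = \int_0^t G_x(t-s)\ast(b(s)u^k(s))\,ds$, and establish continuity of each piece separately for $t > 0$. The first term $G(t)\ast u_0$ is continuous in $L^\infty$ for $t > 0$ by standard properties of the heat semigroup acting on $L^\infty$ data (the map $t \mapsto G(t)\ast u_0$ is smooth on $(0,\infty)$ since differentiating under the integral is justified by the Gaussian decay of all derivatives of $G$). So the heart of the matter is the Duhamel integral term $I[u]$.

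\textbf{Continuity of the Duhamel term.} Fix $0 < t_0 \le \bar t$ and take $t$ near $t_0$, say $t > t_0$ (the case $t < t_0$ is symmetric). I would split
\[
I[u](t) - I[u](t_0) = \int_{t_0}^{t} G_x(t-s)\ast(b(s)u^k(s))\,ds + \int_0^{t_0} \big[G_x(t-s) - G_x(t_0-s)\big]\ast(b(s)u^k(s))\,ds.
\]
For the first integral, I bound $\|G_x(t-s)\ast(b(s)u^k(s))\|_\infty$ using the variant of Young's inequality recorded in the excerpt, $\|f\star g\|_\infty \le \|f\|_{p,1}\|g\|_{p',\infty}$ together with H\"older in Lorentz spaces to handle $b(s)u^k(s)$: since $b(s) \in L^{p,\infty}$ and $u^k(s) \in L^\infty$ (as $u \in L^\infty_{t,x}$ with $\|u\|_\infty \le r$), one gets $b(s)u^k(s) \in L^{p,\infty}$ with norm controlled by $\|b\|_{L^\infty_t L^{p,\infty}_x}\, r^k$. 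Then Proposition~\ref{prop:appx} gives $\|G_x(t-s)\|_{p,1} \le Cp'(t-s)^{1/(2p)-1}$, and since $p > 1$ (so $p < \infty$... more precisely $p \in (2,\infty]$ or the appropriate range, in any case $1/(2p) - 1 > -1$), the function $s \mapsto (t-s)^{1/(2p)-1}$ is integrable near $s = t$. Hence the first integral is $O\big((t-t_0)^{1/(2p)}\big) \to 0$. For the second integral, I would use continuity of $t \mapsto G_x(t)$ in $L^{p,1}$ away from $t=0$: on the region $s \in [0, t_0 - \delta]$ the kernel difference is uniformly small by smoothness of $G_x$ in $t$ bounded away from zero, while the region $s \in [t_0-\delta, t_0]$ contributes $O(\delta^{1/(2p)})$ by the same integrability argument applied to both terms. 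Letting $\delta \to 0$ after $t \to t_0$ closes the estimate.

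\textbf{Main obstacle.} The delicate point is the behavior of the kernel $G_x(t-s)$ as $s \uparrow t$: its $L^{p,1}$ norm blows up like $(t-s)^{1/(2p)-1}$, and one must check this singularity is integrable — which it is precisely because $p \ge 1$ forces the exponent $1/(2p) - 1 > -1$. This is also exactly why the statement asks for $t > 0$: at $t = 0$ there is no room to absorb the singularity and the Duhamel term need not be continuous in $L^\infty$ up to the initial time (indeed $u_0$ is merely $L^\infty$, not continuous). A secondary technical nuisance is making the estimate on the "far" region $s \in [0, t_0-\delta]$ rigorous: one needs $\|G_x(t-s) - G_x(t_0-s)\|_{p,1} \to 0$ uniformly for such $s$, which follows from the explicit Gaussian form of $G_x$ and dominated convergence, but should be stated carefully. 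Once continuity in $L^\infty$ is established, the remark about continuity in every $\|\cdot\|_q$, $q \in [1,\infty]$, follows by interpolating between the $L^\infty$ bound just proved and the conserved $L^1$ bound $\|\Phi[u](t)\|_1 = \|u_0\|_1$.
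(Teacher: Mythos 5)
Your proposal is correct and follows essentially the same route as the paper: decompose $\Phi[u]$ into the heat term and the Duhamel integral, split the latter into the short interval near $t$ (controlled by the integrable singularity $\|G_x(t-s)\|_{p',1}\lesssim (t-s)^{\frac{1}{2p'}-1}$ from Proposition \ref{prop:appx}) and the kernel-difference term on $[0,t_0]$, with $\|b(s)u^{1+k}(s)\|_{p,\infty}\leq \|b\|_{p,\infty}\|u\|_\infty^{1+k}$ via H\"older in Lorentz spaces; your treatment of the far/near splitting of the difference term is in fact slightly more detailed than the paper's, which simply asserts the convergence. The only blemish is an index mismatch in the duality pairing --- since $b(s)u^{1+k}(s)\in L^{p,\infty}$ you must pair it with $G_x\in L^{p',1}$ (giving the exponent $\frac{1}{2p'}-1>-1$), not $\|G_x\|_{p,1}$ as written --- but this is a trivial relabeling that does not affect the argument.
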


\begin{proof}
	Compute
	\begin{equation*}
		\begin{split}
			\|u(t + \delta) - u(t)\|_\infty &= \bigg\| (G(t+\delta) - G(t)) \ast u_0 + \int_0^{t+\delta} G(t+\delta -s) \ast b(s) u^{1+k}(s) ds \\
			&\qquad - \int_0^t G(t - s) \ast b(s) u^{1+k}(s) ds \bigg\|_\infty \\
\big[ \text{Theorem \ref{Theo:holder_lorentz}} \big] \quad		&\leq \|G(t+\delta) - G(t)\|_1 \|u_0\|_\infty + C\int_t^{t+\delta} \|G(t+\delta -s)\|_{p',1} \|b(s) u^{1+k}(s)\|_{p,\infty} ds \\
			& \qquad +C \int_0^t \|G(t + \delta - s) - G(t-s)\|_{p',1} \|b(s) u^{1+k}(s)\|_{p,\infty} ds \\
\big[ \text{Proposition \ref{prop:appx}} \big] \quad			&\leq \|G(t+\delta) - G(t)\|_1 \|u_0\|_\infty \\
& \quad + C \|b\|_{p,\infty} \|u\|_\infty^{k+1} \bigg( \delta^{\frac{1}{2p'}} + \int_0^t \|G(t + \delta - s) - G(t-s)\|_{p',1} ds \bigg).
		\end{split}
	\end{equation*}
		The last terms converge to $0$ as $\delta \to 0$ uniformly in every interval of the form $[t_0,t_1]$, $t_0 > 0$.
\end{proof}

\begin{proof}[Proof of Proposition \ref{prop:loc:exs:1}]
	We start by deducing the uniform continuity in $L^1$ as $t \to 0$: this will give the continuity in time for $\|\cdot\|_q$, $q \in [1,\infty)$. From Theorem \ref{Theo:young_lorentz} in the first inequality we obtain
	\begin{equation*}
\begin{split}
			\|u(t) - u_0\|_1 &\leq \|G(t) \ast u_0 - u_0\|_1 + \int_0^t \|G_x(t-s)\|_1 \|b(s) u^{1+k}(s)\|_1 ds \\
		\big[ \text{Theorem \ref{Theo:holder_lorentz}} \big] \quad &\leq \|G(t) \ast u_0 - u_0\|_1 + \int_0^t \|G_x(t-s)\|_1 \|b(s)\|_{p,\infty} \|u^{1+k}(s)\|_{p',1} ds \\
			\big[ \text{Lemma \ref{Lem:triv_estim_p_p'}}\big]&\leq \|G(t) \ast u_0 - u_0\|_1 + C \|b\|_{p,\infty} r^{k+1/p}  \int_0^t \frac{1}{\sqrt{t-s}} ds,
\end{split}
	\end{equation*}
	which converges to $0$ uniformly once $u_0$ is fixed. Hence, by Lemma \ref{Lem:conti_integr} above, $t \mapsto \Phi[u](t)$ is uniformly continuous in $L^1$.
	
We next prove that $\Phi(S) \subset S$: indeed using again Theorem \ref{Theo:holder_lorentz} in the first inequality, it holds
\begin{equation*}
    \begin{split}
        \|\Phi[u](t)\|_\infty &\le \|u_0\|_\infty + \int_0^t \|G_x(t-s)\|_{p',1} \|b(s) u^{k+1}(s)\|_{p,\infty}\,d s \\
 &\le \|u_0\|_\infty + \int_0^t \|G_x(t-s)\|_{p',1} \|b(s)\|_{p,\infty} \|u(s)\|_{\infty}^{k+1}\,d s \\
 \big[ \text{Proposition }\ref{prop:appx} \big] \quad &\le \|u_0\|_\infty + C t^{\frac{1}{2p'}} \underset{s\in[0,t]}{\textnormal{ess-sup}}\, \|u(s)\|^{k+1}_\infty \\
\big[ 2 \|u_0\|_\infty \leq r \big] \quad        &\le \frac{r}{2} + C \bar t^{\frac{1}{2p'}} r^{k+1}.
    \end{split}
\end{equation*}
Taking $\bar t \ll 1$ it holds that $\Phi(S) \subset S$.

Finally the same computations show that $\Phi[u]$ is a contraction in $C([0,\bar t],L^{p',1}(\R))$:
\begin{equation*}
        \begin{split}
        	\|\Phi[u](t) - \Phi[v](t)\|_{p',1} &\leq C\int_0^t \|G_x(t-s)\|_{p',1} \|b(s)\|_{p,\infty} \|u^{1+k}(s) - v^{1+k}(s)\|_{p',1} ds \\
        	&\leq C \|b\|_{p,\infty} t^{\frac{1}{2p'}} r^k \|u - v\|_{C_t L^{p',1}_x}, 
        \end{split}
\end{equation*}
so that for $\bar t \ll 1$ the statement follows. The case $p = \infty$ follows by H\"older's inequality because $\|b\|_{\infty,\infty} = \|b\|_\infty$, and gives a contraction in $C_t L^1_x$.
\end{proof}

For the second case, i.e. Conditions \eqref{eq:con2}, we start by proving that the second moment $M_2(u(t))$ is bounded if the solution $u$ of \eqref{m:eq:pde} belongs to $L^\infty_{t,x}$.

\begin{lemma}
	\label{Lem:first_moment}
If $\|u\|_{L^\infty_{t,x}} \leq r$, then
\begin{equation*}
\int (1+x^2) |u(t)| dx \leq \bigg( \int (1+x^2) |u_0| dx \bigg) e^{C(1 + r^k) t}.
\end{equation*}
\end{lemma}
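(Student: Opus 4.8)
The plan is to reduce to $u\ge 0$ and then run a Gr\"onwall argument on a \emph{bounded} truncation of the second moment. Since \eqref{m:eq:pde} is monotone with respect to the initial datum, it suffices to treat $u_0\ge 0$ (so $|u(t)|=u(t)$); write $\mathcal E(t):=\int(1+x^2)u(t)\,dx$ and, for $R\ge 1$, $\psi_R(x):=\min\{1+x^2,\,1+R^2\}$, a bounded Lipschitz weight with $1\le\psi_R\le 1+x^2$ and $\psi_R\uparrow 1+x^2$. Set $\mathcal E_R(t):=\int\psi_R\,u(t)\,dx\le(1+R^2)m(u_0)<\infty$. Using $u_t=u_{xx}-(bu^{k+1})_x$ and integrating by parts twice,
\begin{equation*}
\frac{d}{dt}\mathcal E_R(t)=\int u\,\psi_R''\,dx+\int\psi_R'\,b\,u^{k+1}\,dx=\Big(2\!\int_{-R}^{R}\! u\,dx-2R\big(u(t,R)+u(t,-R)\big)\Big)+\int_{-R}^{R}2x\,b\,u^{k+1}\,dx .
\end{equation*}
The crucial gain of truncating is that the boundary terms produced by the heat operator carry a \emph{favourable sign} because $u\ge 0$, so $\int u\,\psi_R''\,dx\le 2\int_{-R}^{R}u\,dx\le 2m(u_0)$; moreover the drift contribution is integrated only over $[-R,R]$, which is exactly where $b$ can be compared to $1+x^2$ irrespective of its growth at infinity.

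For the drift term I would use $u^{k+1}\le r^k u$ together with the pointwise bound on $b$: by Corollary~\ref{Cor:bound_b_infty} and the hypotheses \eqref{eq:con2}, for a.e.\ $s\le t$ one has $|b(s,x)|\le C_b(1+|x|^{1/p'})$ with $C_b$ depending only on $\sup_{s\le t}\|b_x(s)\|_{p,\infty}$ and a local $L^\infty$ bound of $b$ near $x=0$; since $1+\tfrac1{p'}\le 2$ this gives $|x|\,|b(s,x)|\le C_b(1+x^2)=C_b\psi_R(x)$ on $[-R,R]$, hence $\big|\int_{-R}^{R}2x\,b\,u^{k+1}\,dx\big|\le 2C_b r^k\int_{-R}^{R}\psi_R u\,dx\le 2C_b r^k\,\mathcal E_R(t)$. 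Combining with $m(u_0)=m(u(t))\le\mathcal E_R(t)$ (because $\psi_R\ge1$) we get $\tfrac{d}{dt}\mathcal E_R(t)\le C(1+r^k)\,\mathcal E_R(t)$ with $C$ independent of $R$; Gr\"onwall gives $\mathcal E_R(t)\le\mathcal E_R(0)e^{C(1+r^k)t}\le\big(\int(1+x^2)|u_0|\,dx\big)e^{C(1+r^k)t}$, and letting $R\to\infty$ by monotone convergence yields the statement.

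The one point that genuinely needs care — and the main obstacle — is justifying the differential identity for $\mathcal E_R$: differentiating under the integral sign and performing the integration by parts against the merely Lipschitz weight $\psi_R$ (whose second distributional derivative is $2\mathbf{1}_{(-R,R)}-2R(\delta_R+\delta_{-R})$), while a priori only knowing $u\in L^\infty_{t,x}\cap C_t L^1_x$. I would settle this through the mild formulation: write $u(t)=G(t)\ast u_0+\int_0^t G_x(t-s)\ast(b(s)u^{1+k}(s))\,ds$, note that $u(t,\cdot)$ is continuous for $t>0$ (continuity of translations in $L^{p',1}$ together with H\"older in Lorentz spaces, as in Lemma~\ref{Lem:conti_integr}), and pair the Duhamel identity with $\psi_R$ using the explicit Gaussian moment identities $\int(1+x^2)G(\tau,x-y)\,dx=1+2\tau+y^2$ and $\int(1+x^2)G_x(\tau,x-y)\,dx=-2y$; this reproduces the integral form of the estimate above without any regularity on $u$ beyond what Section~\ref{s:loc:exist} provides. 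Everything else — the sign of the heat boundary term, the self-improving bound $|x|\,|b|\lesssim\psi_R$ on $[-R,R]$, and the use of a bounded truncation so that the growth of $b$ at infinity never enters the estimate — is precisely what makes the Gr\"onwall loop close.
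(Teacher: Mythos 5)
Your proof is correct and takes essentially the same route as the paper's: differentiate the weighted second moment, bound $|b(s,x)|\lesssim 1+|x|$ via Corollary \ref{Cor:bound_b_infty}, use $u^{k+1}\le r^k u$ together with mass conservation, and close with Gr\"onwall. The truncation $\psi_R$ (with the favourable sign of the boundary terms) and the Duhamel justification are added rigor that the paper's formal computation omits, but they do not alter the argument.
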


\begin{proof}
Using the estimates by Corollary \ref{Cor:bound_b_infty}
\begin{equation*}
\begin{split}
&|b(x)| \le \|b_x\|_{p,\infty} x^{{1/p'}} + |b(0)| \leq C \sqrt{1+x^2} \le C(1+|x|),
\end{split}
\end{equation*}
by \eqref{m:eq:pde} it holds
\begin{equation*}
\begin{split} \frac{d}{dt} \int (1+x^2) u \,dx &= \int (1+x^2) (u_x-bu^{k+1})_x dx \\
        &= - 2 \int x u_x dx + \int 2 x bu^{k+1} dx \\
        &\le 2 \int u\,dx + 2 \int |x b|u^{k+1}\,dx \\
        &\le 2 \|u\|_1 + C \int (1+x^2) u^{k+1}\,dx\\
        &\le C (1 + \|u\|_\infty^k) \int (1+x^2) |u| dx.
\end{split}
\end{equation*}
Since $\|u\|_\infty \leq r$, one integrates the differential inequality to obtain the statement.
\end{proof}

\begin{corollary}
\label{Cor:cont_L_infty_case_2}
If $u(t) \in L^\infty$ and $M_2(u_0) < \infty$, then $t \mapsto \Phi[u](t)$ is uniformly continuous in the interval $t \in [t_0,t_1]$ with $t_0 > 0$, 
\end{corollary}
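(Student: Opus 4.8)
The plan is to reduce the second case to the same estimates already carried out in Lemma \ref{Lem:conti_integr} and in the proof of Proposition \ref{prop:loc:exs:1}, the only new ingredient being that now $b$ itself is not globally in $L^{p,\infty}$ but only has $b_x \in L^{p,\infty}$, so that $b$ grows at most like $1+|x|$ by Corollary \ref{Cor:bound_b_infty}. First I would write, exactly as in Lemma \ref{Lem:conti_integr}, the difference $\Phi[u](t+\delta) - \Phi[u](t)$ as the sum of the heat-semigroup term $(G(t+\delta) - G(t)) \ast u_0$, the "tail" term $\int_t^{t+\delta} G_x(t+\delta - s) \ast (b(s) u^{1+k}(s))\,ds$, and the "main" term $\int_0^t \big( G_x(t+\delta-s) - G_x(t-s) \big) \ast (b(s)u^{1+k}(s))\,ds$. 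The first term tends to $0$ in $L^\infty$ since $u_0 \in L^1 \cap L^\infty$ and $\|G(t+\delta)-G(t)\|_1 \to 0$.

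For the two convolution terms the point is to control $\|b(s) u^{1+k}(s)\|$ in a norm suited to a $G_x$-convolution estimate. Since $\|u(s)\|_\infty \le r$ and, by Lemma \ref{Lem:first_moment}, the weighted mass $\int (1+x^2)|u(s)|\,dx$ is bounded uniformly on $[t_0,t_1]$, and since $|b(s,x)| \le C(1+|x|) \lesssim (1+x^2)^{1/2}$ by Corollary \ref{Cor:bound_b_infty}, we get for each $s$
\begin{equation*}
\|b(s) u^{1+k}(s)\|_1 \le C r^k \int (1+|x|) |u(s)|\,dx \le C r^k \bigg( \int (1+x^2)|u(s)|\,dx \bigg)^{1/2} \|u(s)\|_1^{1/2} \le C',
\end{equation*}
uniformly in $s \in [0,t_1]$, and likewise $\|b(s)u^{1+k}(s)\|_\infty \le C(1+|x|)$-times-$L^\infty$ is not globally bounded, so I would instead interpolate: $b(s)u^{1+k}(s) \in L^1 \cap L^{q}_{\loc}$ with a uniform bound in any $L^{q}$, $q \in [1,\infty)$, by combining the $L^1$ bound above with $\|u\|_\infty \le r$ and the weighted estimate. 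Then $\|G_x(t-s)\star (b u^{1+k})(s)\|_\infty \lesssim \|G_x(t-s)\|_{q',1}\,\|(bu^{1+k})(s)\|_{q,1}$ with $\|G_x(\sigma)\|_{q',1} \simeq \sigma^{1/(2q')-1}$ by Proposition \ref{prop:appx}, which is integrable near $\sigma = 0$ for $q' > 1$; this makes the tail term $O(\delta^{1/(2q')})$ and, via dominated convergence together with $\|G_x(t+\delta-s) - G_x(t-s)\|_{q',1} \to 0$ pointwise in $s < t$, makes the main term tend to $0$, all uniformly for $t$ in a compact subinterval of $(0,\infty)$. This gives uniform continuity of $t \mapsto \Phi[u](t)$ in $L^\infty$ on $[t_0,t_1]$, and hence — by the uniform weighted-mass bound and interpolation — in every $\|\cdot\|_q$, $q \in [1,\infty)$, for $t_0 > 0$.

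The main obstacle is the unboundedness of $b$ in $x$: one cannot simply write $\|b(s)u^{1+k}(s)\|_{p,\infty} \le \|b(s)\|_{p,\infty}\|u(s)\|_\infty^{1+k}$ as in the first case, since $b(s) \notin L^{p,\infty}$ globally. The fix is precisely the weighted-mass bound of Lemma \ref{Lem:first_moment}, which turns the linear growth of $b$ into an honest $L^1$ (and, by interpolation with $L^\infty$, $L^q$ for all finite $q$) bound on $b(s)u^{1+k}(s)$ that is uniform in $s$; after that the argument is a verbatim repetition of Lemma \ref{Lem:conti_integr}. One should also note that $M_2(u(t)) < \infty$ for $t>0$ is exactly the hypothesis invoked, so the corollary is stated for solutions already known to lie in $L^\infty_{t,x}$, and no circularity arises.
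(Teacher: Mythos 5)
Your proposal is correct and follows essentially the same route as the paper: the same three-term Duhamel decomposition as in Lemma \ref{Lem:conti_integr}, with the linear growth $|b(x)|\le C(1+|x|)$ from Corollary \ref{Cor:bound_b_infty} combined with the weighted-mass bound of Lemma \ref{Lem:first_moment} to control $\|b(s)u^{1+k}(s)\|$ in an integral norm, followed by the $G_x$-convolution estimate. The paper simply fixes $q=2$ and uses $\|G_x(\sigma)\star f\|_\infty\le\|G_x(\sigma)\|_2\|f\|_2$ with $\|G_x(\sigma)\|_2\simeq\sigma^{-3/4}$, bounding $\|b\,u^{1+k}\|_2\lesssim \|u^{1+k}\|_2+E(u)^{1/2}\|u\|_\infty^{k+1/2}$. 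Two small corrections to your write-up: the claimed uniform bound on $\|b u^{1+k}\|_q$ for \emph{all} finite $q$ is not available, since for $q>2$ it would require moments of $u$ of order $q$, whereas Lemma \ref{Lem:first_moment} only controls $M_2$ — but you only need one exponent, and $q\in(1,2]$ works; and in the convolution inequality the Lorentz index $1$ should sit on $G_x$ (Proposition \ref{prop:appx}) while $b u^{1+k}$ only needs to be measured in $L^{q,\infty}\supset L^q$, not in the smaller space $L^{q,1}$.
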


\begin{proof}
Following the same line as in Lemma \ref{Lem:conti_integr} above,
\begin{equation*}
\begin{split}
\|&u(t + \delta) - u(t)\|_\infty \\
&\leq \|G(t + \delta) - G(t)\|_1 \|u_0\|_\infty + \int_0^\delta \|G_x(\delta - s)\|_{2} \|b(t+s) u^{1+k}(t+s)\|_{2} ds \\
& \quad + \int_0^t \|G_x(t+\delta-s) - G_x(t-s)\|_{2} \|b(s) u^{1+k}(s)\|_{2} ds \\
&\leq \|G(t + \delta) - G(t)\|_1 \|u_0\|_\infty + C \Big(  \|u^{1+k}\|_{L^2_{t,x}} + \sup_{[t_0,t_1]} E(u(t))^{\frac{1}{2}} \|u\|_\infty^{\frac{1}{2} + k} \Big) \delta^{\frac{1}{4}} \\
& \quad + C \Big(  \|u^{1+k}\|_{2} + \sup_{[t_0,t_1]} E(u(t))^{\frac{1}{2}} \|u\|_\infty^{\frac{1}{2} + k} \Big) \int_0^t \|G_x(t + \delta - s) - G_x(t-s)\|_{2} ds,
\end{split}
\end{equation*}
where we recall that $E(u)$ is the second moment of $u$. As in the previous case, the r.h.s. converges to $0$ uniformly for $t \geq t_0$.
\end{proof}

\begin{proposition}
\label{prop:loc:exs:2} 
Assume that $b(t,x=0)$ is uniformly bounded, $b_x \in L^\infty_t L^{p,\infty}_x$, $k \geq 0$ and $u_0 \in L^\infty \cap L^1(\R)$ with bounded second order moment $E(u_0)$. Then $u \mapsto \Phi[u]$ is a contraction in the set
$$
S = \Big\{ u \in L^\infty([0,\bar t] \times \R) \cap C([0,\bar t],L^{2}(\R)), \|u\|_\infty \leq r, u(0) = u_0 \Big\},
$$
with $\bar t$ sufficiently small and $r > 2 \|u_0\|_\infty$. In particular, there is only one bounded solution for \eqref{m:eq:pde}, and it belongs to the space $C([0,\bar t], L^{2}(\R))$.
\end{proposition}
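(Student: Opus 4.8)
\section*{Proof proposal for Proposition \ref{prop:loc:exs:2}}

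The plan is to imitate the contraction argument of Proposition \ref{prop:loc:exs:1}, now in $C([0,\bar t],L^2(\R))$ instead of $C([0,\bar t],L^{p'}(\R))$. The point is that under \eqref{eq:con2} the drift is no longer a multiplier with a finite $L^{p,\infty}$‑norm: Corollary \ref{Cor:bound_b_infty} only gives, uniformly on $[0,\bar t]$,
\[
|b(t,x)| \le |b(t,0)| + p'\|b_x(t)\|_{p,\infty}|x|^{1/p'} \le C(1+|x|),
\]
so the product $b(s)u^{1+k}(s)$ must be estimated by absorbing the linear growth of $b$ into a moment of $u$:
\[
\|b(s)u^{1+k}(s)\|_2^2 \lesssim \int (1+x^2)\,u^{2(1+k)}(s)\,dx \le \|u(s)\|_\infty^{1+2k}\Big(m(u(s))+M_2(u(s))\Big),
\]
i.e. $\|b(s)u^{1+k}(s)\|_2 \lesssim r^{k+\frac12}\big(m(u_0)+M_2(u(s))\big)^{\frac12}$. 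Hence the fixed point should be run in the set $S$ of the statement, intersected with $\{\sup_{[0,\bar t]}M_2(u(t))\le\rho\}$ for, say, $\rho=2(m(u_0)+E(u_0))$; by $\|u\|_\infty\le r$ and Fatou this intersection is still closed in $C_tL^2$, hence a complete metric space. This is the only place where $E(u_0)<\infty$ enters.

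With $|b|\lesssim 1+|x|$ at hand, the three steps of Proposition \ref{prop:loc:exs:1} carry over with $L^2$ replacing $L^{p'}$. First, $\Phi(S)\subset S$ for the $L^\infty$‑bound: using $\|f\star g\|_\infty\le\|f\|_2\|g\|_2$, the heat‑kernel estimate $\|G_x(\tau)\|_2\lesssim\tau^{-3/4}$ (Proposition \ref{prop:appx} with $p=2$), and the bilinear bound above,
\[
\|\Phi[u](t)\|_\infty \le \|u_0\|_\infty + C\int_0^t (t-s)^{-3/4}\,\|b(s)u^{1+k}(s)\|_2\,ds \le \tfrac r2 + C\,\bar t^{\,1/4}\,r^{k+\frac12}(m(u_0)+\rho)^{\frac12}\le r
\]
for $\bar t\ll1$; likewise $\|\Phi[u](t)\|_2\le\|u_0\|_2+C\bar t^{1/2}r^{k+\frac12}(m(u_0)+\rho)^{1/2}<\infty$ via $\|G_x(\tau)\|_1\lesssim\tau^{-1/2}$. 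Second, continuity of $t\mapsto\Phi[u](t)$ in $L^2$ for $u\in S$: this is exactly the computation of Lemma \ref{Lem:conti_integr}/Corollary \ref{Cor:cont_L_infty_case_2}. Third, the contraction: for $u,v\in S$, using $|u^{1+k}-v^{1+k}|\le(1+k)r^k|u-v|$ and the weight estimate,
\[
\|\Phi[u](t)-\Phi[v](t)\|_2 \le C\int_0^t(t-s)^{-1/2}\|b(s)\big(u^{1+k}-v^{1+k}\big)(s)\|_2\,ds \le C\,\bar t^{\,1/2}\,r^{k}(m(u_0)+\rho)^{1/2}\,\|u-v\|_{C_tL^2},
\]
a genuine contraction for $\bar t$ small. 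The remaining step — the invariance $M_2(\Phi[u](t))\le\rho$ — I would get by repeating the computation of Lemma \ref{Lem:first_moment}: in $\Phi[u]=G(t)\ast u_0+\int_0^t G_x(t-s)\ast(bu^{1+k})(s)\,ds$ the first term contributes $M_2(G(t)\ast u_0)=E(u_0)+2t\,m(u_0)$, and in the second the identity $\int x^2\,(G_x(\tau)\ast g)\,dx=-2\int x\,g\,dx$ lowers the order of the weight, so that the Duhamel contribution is controlled by $\int_0^t\!\!\int |x|\,|b(s)|u^{1+k}(s)\,dx\,ds\lesssim r^k\int_0^t(m(u_0)+M_2(u(s)))\,ds\le C\bar t\,r^k(m(u_0)+\rho)$; choosing $\bar t$ small gives $M_2(\Phi[u](t))\le\rho$.

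The main obstacle is exactly this last propagation of the moment bound, because the linear growth of $b$ costs one power of $|x|$ in every estimate. Two points deserve care. (a) The integration‑by‑parts identity controls the \emph{signed} integral $\int x^2\,\Phi[u](t)\,dx$; to pass to $\int x^2|\Phi[u](t)|\,dx$ one uses that the solution of \eqref{m:eq:pde} is nonnegative with conserved mass (maximum principle), or — more cleanly — one first solves for the truncated drifts $b_N=(-N)\vee b\wedge N$, to which Proposition \ref{prop:loc:exs:1} applies directly, noting that $|b_N|\le|b|\le C(1+|x|)$ gives the Lemma \ref{Lem:first_moment} bound with a constant \emph{independent of $N$}, so that the $u_N\ge0$ all live on a common $[0,\bar t]$ with uniform bounds, and then passes to the limit $N\to\infty$ (the $u_N$ are Cauchy in $C_tL^2$ by the contraction estimate, uniformly in $N$). (b) Uniqueness among bounded solutions follows from the contraction above, or alternatively from the $L^1$‑contraction of the viscous conservation law \eqref{m:eq:pde}. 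Everything else is a routine transcription of the proofs of Proposition \ref{prop:loc:exs:1} and Corollary \ref{Cor:cont_L_infty_case_2}.
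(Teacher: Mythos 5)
Your overall strategy is the paper's: a Duhamel fixed point on a small interval, the pointwise bound $|b(t,x)|\le C(1+|x|)$ from Corollary \ref{Cor:bound_b_infty}, the bilinear estimate $\|b\,u^{1+k}\|_2\lesssim r^{k+\frac12}\big(m(u)+M_2(u)\big)^{\frac12}$ for the self-map, and smallness of $\bar t$. However, your contraction step does not close as written. You bound $\|b(s)(u^{1+k}-v^{1+k})(s)\|_2\lesssim r^k\|(1+|x|)(u-v)(s)\|_2$ and then claim this is $\lesssim r^k(m(u_0)+\rho)^{\frac12}\|u-v\|_{C_tL^2}$: a \emph{weighted} $L^2$ norm of the difference is not controlled by the unweighted one, nor by moment bounds on $u,v$ separately, and it is not part of your metric, so no Lipschitz estimate in $C_tL^2$ follows. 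The way the paper closes this is to put the weight on $u^k+v^k$ instead of on $u-v$: from $|b(u^{1+k}-v^{1+k})|\lesssim(1+|x|)(u^k+v^k)|u-v|$ and Cauchy--Schwarz one gets $\|b(u^{1+k}-v^{1+k})\|_1\lesssim\big[(\|b(\cdot,0)\|_\infty+1)(\|u\|_\infty^{k-\frac12}+\|v\|_\infty^{k-\frac12})+E(u)^{\frac12}\|u\|_\infty^{k-\frac12}+E(v)^{\frac12}\|v\|_\infty^{k-\frac12}\big]\,\|u-v\|_2$ (this is where $k\ge\frac12$ is used, via $x^2u^{2k}\le\|u\|_\infty^{2k-1}x^2u$), and then Young's inequality in the form $\|G_x(\tau)\ast h\|_2\le\|G_x(\tau)\|_2\|h\|_1\lesssim\tau^{-3/4}\|h\|_1$ gives a genuine contraction in $C_tL^2$ with factor $\bar t^{1/4}$. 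With your splitting $\|G_x\|_1\|h\|_2$ the weight unavoidably lands on the difference, and the argument stalls.

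The second problem is the invariance $M_2(\Phi[u])\le\rho$, which you correctly flag but do not repair. The identity $\int x^2\,(G_x(\tau)\ast g)\,dx=-2\int x\,g\,dx$ controls only the signed integral, while $M_2$ requires $\int x^2|\Phi[u]|\,dx$; the direct estimate with $|G_x|$ costs $\int\big(\tau^{1/2}+|y|+\tau^{-1/2}y^2\big)|b(y)|u^{1+k}(y)\,dy$, i.e.\ a \emph{third} moment (each factor of $b$ costs a power of $|x|$), which is not available. Your fix (a) is circular: for a generic $u\in S$ the function $\Phi[u]$ is not a solution and need not be nonnegative (the Duhamel term has no sign), so the maximum principle cannot be invoked at the level of the iterates. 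Your fix (b) is the right spirit — indeed it is essentially how one would make rigorous the paper's own use of Lemma \ref{Lem:first_moment}, which is a PDE estimate valid for solutions rather than for elements of $S$ — but as sketched the uniform Cauchy claim for the truncated solutions relies on the same unproven contraction and on the tails $\int_{|x|\ge cN}x^2u_M\,dx$ vanishing uniformly in $M$, which does not follow from a uniform second-moment bound alone; one would need a compactness-plus-uniqueness argument or a quantitative tail estimate. The paper avoids putting $M_2$ into the fixed-point set altogether: the moment bound enters only through $E(u)^{\frac12}$ factors in the Duhamel estimates (controlled via Lemma \ref{Lem:first_moment}), and the contraction is run with the difference measured in plain $L^2$ as above.
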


\begin{proof}
Following the same line of the proof of Proposition \ref{prop:loc:exs:1} we study first the continuity for $t \searrow 0$: 
	\begin{equation*}
\begin{split}
			\|u(t) - u_0\|_1 &\leq \|G(t) \ast u_0 - u_0\|_1 + \int_0^t \|G_x(t-s)\|_1 \|b(s) u^{1+k}(s)\|_1 ds \\
&\leq \|G(t) \ast u_0 - u_0\|_1 + C\int_0^t \|G_x(t-s)\|_1 \big[ (\|b(s,0) \|_\infty + 1) \|u^{k}(s)\|_\infty + \|u^{k}(s)\|_{\infty} E(u(s)) \big] ds \\
			&\leq \|G(t) \ast u_0 - u_0\|_1 + C r^k \big[ (\|b(x=0)\|_{\infty}+1) + \sup_{[0,t]}E(u_0) \big] \sqrt{t},
\end{split}
	\end{equation*}
which converges to $0$ as $t \to 0$. Hence, together with Corollary \ref{Cor:cont_L_infty_case_2} we obtain the uniform continuity in every $L^p$-space.

Next,
\begin{equation*}
    \begin{split}
        \|\Phi[u](\bar t)\|_\infty &\le \|u_0\|_\infty + \int_0^{\bar t} \|G_x(\bar t-s)\|_{2} \|b(s) u^{k+1}(s)\|_{2}\,d s \\
 &\le \|u_0\|_\infty + C\big( \|b(x=0)\|_{L^\infty_t}+1+ \sup_{[0,\bar t]}E(u(t))^{\frac{1}{2}} \big) \|u\|_{\infty}^{k+\frac{1}{2}} \int_0^{\bar t} \|G_x(\bar t-s)\|_{2} d s \\
&\le \|u_0\|_\infty + C (1+e^{C \bar t(1+r^k)/2 })r^{k + \frac{1}{2}}  \bar t^{\frac{1}{4}} \\
&\le \frac{r}{2} + C (1 +e^{C\bar t(1+r^k)/2 })r^{k + \frac{1}{2}}  \bar t^{\frac{1}{4}},
    \end{split}
\end{equation*}
Taking $\bar t \ll 1$ it holds that $\Phi(S) \subset S$.

Finally, for positive solutions,
\begin{equation*}
        \begin{split}
        	\int (1 + x^2) \big( \Phi[u](\bar t,x) - \Phi[v](\bar t,x) \big) &\leq \int_0^{\bar t} \|G_x(\bar t-s)\|_{2} \|b(s)(u^{1+k}(s) - v^{1+k}(s))\|_{1} ds \\
        	&\leq C \Big( (\|b(0)\|_{L^\infty_t}+1) (\|u\|_{L^\infty_{t,x}}^{k - \frac{1}{2}} + \|v\|_{L^\infty_{t,x}}^{k - \frac{1}{2}}) \Big)\|u - v\|_{C_t L^2_x} \bar t^{\frac{1}{4}} \\
& \quad + \big( E(u)^{1/2} \|u\|^{k - \frac{1}{2} }_{L^\infty_{t,x}} + E(v)^{1/2} \|v\|^{k - \frac{1}{2}}_{L^\infty_{t,x}} \big) \|u - v\|_{C_t L^2_x} \bar t^{\frac{1}{4}}, 
        \end{split}
\end{equation*}
so that for $\bar t \ll 1$ $\Phi$ is a contraction.
\end{proof}

\begin{remark}
    The condition $E(u_0) < +\infty$ that we used in Proposition \ref{prop:loc:exs:2} will not play a role in the rest of the paper, in the sense that the estimates obtained are independent on $E(u_0)$. The same can be said for the condition $k \geq \frac{1}{2}$. Clearly for the blow-up it is more interesting to study the PDE for large $k$. 
\end{remark}

\section{Global existence in the subcritical and critical case}
\label{s:glob}

In order to prove the global existence we consider a standard rescaling about a possible blow-up at point $(T,\hat x)$, where w.l.o.g. we assume that $\hat x=0$. We will show that the rescaled solutions decrease to $0$ with the appropriate speed in $L^2$-norm at time $T$ in the critical and subcritical case, so that the original unscaled solution is bounded by using the estimates contained in \cite{guidolin2022global}.

\subsection{Rescaled variables}
\label{Ss:realed_variables}

Set
\begin{equation*}
t = T(1 - e^{-\tau}),
\end{equation*}
and define 
\begin{equation*}
\begin{split}
    v(\tau,y) &= \sqrt{T} e^{-\tau/2} u\Big(T(1-e^{-\tau}),\sqrt{T e^{-\tau}} y\Big), \\
    \tilde b(\tau,y) &= (T e^{-\tau})^{\frac{1-k}{2}} b\Big(T(1-e^{-\tau}),\sqrt{Te^{-\tau}} y\Big).
\end{split}
\end{equation*}
The rescaled equation for $v$ is then
\begin{equation*}
v_\tau + \frac{1}{2} (yv)_y + (\tilde b v^{1+k})_y = v_{yy}, \quad v(\tau = 0) = \sqrt{T} u_0(\sqrt{T} y).
\end{equation*}
We observe that 
\begin{equation*}
\|\tilde b(\tau)\|_{p,\infty} = (Te^{-\tau})^{\frac{1 - k - 1/p}{2}} \big\| b(T(1-e^{-\tau})) \big\|_{p,\infty},
\end{equation*}
hence in particular in the critical and subcritical case it holds
\begin{equation}
\label{Equa:tildeb_Lp:subcr}
    \sup_{\tau > 0} \| \tilde b(\tau) \|_{p,\infty} \le T^{\frac{1 - 1/p - k}{2}} \sup_{t \in (0,T)} \|b(t)\|_{p,\infty} \leq C T^{\frac{1 - 1/p - k}{2}}, \qquad \text{when} \quad k \le 1 - \frac{1}{p} = \frac{1}{p'}.
\end{equation}
In the same way, for $b_x \in L^{p,\infty}(\R)$ 
\begin{equation}
\label{Equa:tildeby_Lp}
\|\tilde b_y(\tau)\|_{p,\infty} = (Te^{-\tau})^{\frac{2-k-1/p}{2}} \big\| b(T(1-e^{-\tau})) \big\|_{p,\infty},
\end{equation}
and then
\begin{equation}
\label{Equa:tildeby_Lp:subcr}
   \sup_{\tau > 0} \|\tilde b_y(\tau)\|_{p,\infty} \le T^{\frac{2 - 1/p - k}{2}} \sup_{t \in (0,T)} \|b_x(t)\|_{p,\infty} \leq C T^{\frac{2 - 1/p - k}{2}}, \qquad \text{for} \quad k \le 2 - 1/p.
\end{equation}
Moreover we observe that
\begin{equation}
    \label{Equa:L2_v_u}
    \|v(\tau)\|_2^2 = \sqrt{T} e^{- \frac{\tau}{2}} \|u(T(1- e^{-\tau}))\|_2^2.
\end{equation}

\subsection{Entropy dissipation}
\label{m:Ss:entropy:comp}

If $\eta:\R\to\R$ is a smooth $C^{1,1}$-function, then
\begin{equation}\label{eq:der:entropy}
\begin{split}
\frac{d}{d\tau} \int \eta(v) &= \int \eta' v_\tau = \int \eta'(v)  v_{yy} + \int \eta' \bigg( - \frac{1}{2} y v -{\hb}v^{k+1}\bigg)_y.
\end{split}
\end{equation}
Here we consider the entropy $\eta_a$ given by 
\begin{equation*}
    \eta_a(v)=\begin{cases}
        {v^2/2} & 0 \leq v \le a, \\
        a\left(v-{a}/{2}\right) & a < v < \infty,
    \end{cases}
\end{equation*}
and denote
$$
v_a = v \wedge a.
$$
The parameter $a$ will be chosen later on to be sufficiently small.

\begin{lemma}
\label{lemma:entropty}
Assume the exponent are critical or subcritical, i.e. Conditions \eqref{eq:con1} with $k\le 1-\frac{1}{p}$ or Conditions \eqref{eq:con2} and $k \le 2-\frac{1}{p}$. Then for every time $\bar \tau$ there exists a constant $a = a(p,k,\frac{e^{\bar \tau}}{T})$ such that 
it holds 
\begin{equation}
\label{Equa:entropy_decrease_1}
    \frac{d}{d\tau} \int \eta_{a}(v(\tau,y))\,dy\le-\frac{\|v_{a,y}(\tau)\|^2_2}{2} - \frac{\|v_a(\tau)\|_2^2}{8}
\end{equation}
for $\tau \geq \bar \tau$. If $k = 1 - \frac{1}{p}$ in \eqref{eq:con1} or $k = 2 - \frac{1}{p}$ in \eqref{eq:con2} then $a = a(p,k)$ is independent on $\bar \tau,T$.
\end{lemma}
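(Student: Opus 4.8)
The plan is to start from the entropy-dissipation identity \eqref{eq:der:entropy} applied to $\eta = \eta_a$, exploit the special structure of $\eta_a$ (quadratic below the truncation level $a$, linear above), and bound each of the three resulting terms. For the diffusion term $\int \eta_a'(v) v_{yy}$, integration by parts gives $-\int \eta_a''(v) v_y^2 = -\int_{\{v\le a\}} v_y^2 = -\|v_{a,y}\|_2^2$ (the Lipschitz regularity of $\eta_a'$ and the fact that $\eta_a'' = \1_{\{v \le a\}}$ a.e. is what makes this clean). For the linear drift term $\int \eta_a'(v)(-\tfrac12 y v)_y$, I would integrate by parts to move the $y$-derivative onto $\eta_a'(v)$, obtaining $\tfrac12 \int y\, \eta_a''(v) v v_y = \tfrac12 \int y\, (\tfrac{1}{2}(v_a^2))_y$ on $\{v\le a\}$ plus a piece on $\{v > a\}$; a further integration by parts converts this into $-\tfrac14\int v_a^2 \,dy$ modulo a controllable remainder. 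This is precisely the algebraic reason the term $-\tfrac18\|v_a\|_2^2$ (and a spare $-\tfrac18\|v_a\|_2^2$ to be eaten by the nonlinearity) appears: the confining term $\tfrac12(yv)_y$ of the Fokker--Planck rescaling produces a genuinely negative contribution in the $L^2$ entropy.

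The heart of the matter is the nonlinear term $\int \eta_a'(v)\,(-\tilde b v^{1+k})_y\,dy$. Integrating by parts it becomes $\int \eta_a''(v) v_y\, \tilde b v^{1+k}\,dy = \int_{\{v\le a\}} v_y\,\tilde b\, v^{1+k}\,dy$, which I would rewrite as $\int \tilde b\, v_{a,y}\, v_a^{1+k}\,dy = \frac{1}{k+2}\int \tilde b\,(v_a^{k+2})_y\,dy = -\frac{1}{k+2}\int \tilde b_y\, v_a^{k+2}\,dy$ in case \eqref{eq:con2}, or else keep it as $\int \tilde b\, v_{a,y}\, v_a^{1+k}$ in case \eqref{eq:con1}. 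In either form one estimates using Hölder in Lorentz spaces (Theorem \ref{Theo:holder_lorentz}), the scale-invariant bounds \eqref{Equa:tildeb_Lp:subcr} or \eqref{Equa:tildeby_Lp:subcr} on $\tilde b$, and then the one-dimensional Gagliardo--Nirenberg / Sobolev inequality (Proposition \ref{Prop:gagliardo_Lorentz}) to interpolate $\|v_a\|$ in the right Lebesgue/Lorentz norm between $\|v_{a,y}\|_2$ and $\|v_a\|_2$. Because $v_a \le a$ pointwise, every power $v_a^{\beta}$ with $\beta$ larger than what GN directly controls can absorb a factor $a^{\beta - \beta_0}$; choosing $a$ small makes the whole nonlinear contribution smaller than $\tfrac14\|v_{a,y}\|_2^2 + \tfrac18\|v_a\|_2^2$, so it is absorbed by the good terms and \eqref{Equa:entropy_decrease_1} follows with the stated constants $\tfrac12$ and $\tfrac18$ (after relabelling: the dissipation term actually comes out as a larger multiple of $\|v_{a,y}\|_2^2$, of which half is kept and half spent).

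The main obstacle — and the reason $a$ is allowed to depend on $\tau$ through $e^{\bar\tau}/T$ in the strictly subcritical case but not in the critical case — is the bookkeeping of the power of $(Te^{-\tau})$ multiplying $\|\tilde b\|_{p,\infty}$ (resp. $\|\tilde b_y\|_{p,\infty}$). In the critical case $k = 1-\tfrac1p$ (resp. $k = 2-\tfrac1p$) that exponent vanishes, so the bound on $\tilde b$ is genuinely uniform in $\tau$ and $T$ and the smallness needed for $a$ depends only on $p,k$ and the GN constant; this is exactly what the last sentence of the lemma asserts. In the strictly subcritical case the exponent is negative, so $\|\tilde b(\tau)\|_{p,\infty}$ could a priori blow up as $\tau \to \infty$ — but on any half-line $\tau \ge \bar\tau$ it is bounded by $C(Te^{-\bar\tau})^{\text{(negative)}}$, and this is what forces $a$ to depend on $e^{\bar\tau}/T$; one simply takes $a$ small enough in terms of that quantity. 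I would also need to be slightly careful about justifying the integrations by parts (decay of $v$, $v_y$, and the $y\,v^2$ term at infinity), which follows from the $L^1 \cap L^\infty$ bound on $u$, the parabolic smoothing, and the second-moment control of Lemma \ref{Lem:first_moment} transported to the rescaled variables — but this is routine and I would relegate it to a remark.
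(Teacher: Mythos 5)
Your plan reproduces the paper's proof essentially verbatim: the same identity $\frac{d}{d\tau}\int\eta_a(v)=-\|v_{a,y}\|_2^2-\frac14\|v_a\|_2^2+\int v_{a,y}\tilde b\,v_a^{k+1}$, the same treatment of the nonlinear term (kept as is in case \eqref{eq:con1}, rewritten as $-\frac{1}{k+2}\int\tilde b_y v_a^{k+2}$ in case \eqref{eq:con2}), the same chain of Lorentz--H\"older, interpolation, Gagliardo--Nirenberg and Young, and the same absorption of the superquadratic power of $\|v_a\|_2$ via $\|v_a\|_2^2\le\|v_a\|_\infty\|v_a\|_1\le a$.

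One sign needs fixing in your discussion of why $a$ depends on $e^{\bar\tau}/T$. In the strictly subcritical case the exponent $\frac{1-\frac1p-k}{2}$ (resp.\ $\frac{2-\frac1p-k}{2}$) multiplying $(Te^{-\tau})$ is \emph{positive}, not negative: $\|\tilde b(\tau)\|_{p,\infty}$ does not threaten to blow up as $\tau\to\infty$ but rather decays, and on the half-line $\tau\ge\bar\tau$ it is maximal at $\tau=\bar\tau$, where it equals $C(Te^{-\bar\tau})^{\frac{1-\frac1p-k}{2}}$. This monotonicity is precisely what lets a single $a=a(p,k,e^{\bar\tau}/T)$ work for all $\tau\ge\bar\tau$; under the sign you state, no fixed $a$ would suffice on the half-line and the argument would collapse. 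With that correction the proposal is correct and coincides with the paper's proof.
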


\begin{proof}
By integration by parts, Equation \eqref{eq:der:entropy} becomes
\begin{equation}
\label{Equa:entropy_v_a}
    \frac{d}{d\tau} \int \eta_{a}(v) dy =-\|v_{a,y}\|^2_2-\frac{1}{4}\|v_{a}\|^2_2+\int v_{a,y}\hb v_a^{k+1}.
\end{equation}

Assume first Conditions \eqref{eq:con1}, and 
let 
\begin{equation*}
\frac{1}{p} + \frac{1}{p'} = \frac{1}{2}, \quad 2 < p < \infty. 
\end{equation*}
Fixed $p > 2$, choose $2 < \tilde p' < p'$ such that $(1 + k) \tilde p' > 2$ (so the constants below do not depend on $\tilde p'$)
and compute 
\begin{equation}
\label{m:eq:f3}
\begin{split}
\bigg| \int v_{a,y} \hb v_a^{k+1} \bigg| &\le  \|\hb v_a^{k+1}\|_2 \|v_{a,y}\|_2 \\
\big[ \text{Proposition \ref{prop:equiv_L_p}} \big] \quad 
&\leq \| \tilde b v_a^{k+1} \|_{2,2} \|v_{a,y}\|_2 \\
\big[ \text{Theorem \ref{Theo:holder_lorentz}} \big] \quad 
&\le C \|\hb\|_{p,\infty} \|v_a^{k+1}\|_{p',2} \|v_{a,y}\|_2\\
\big[ \text{(\ref{Equa:estima_scal}) and (\ref{Equa:tildeb_Lp:subcr})} \big] \quad 
&\leq C(p) (T e^{-\tau})^{\frac{1 - \frac{1}{p} - k}{2}} \|v_a\|_{(k+1)p',(k+1)2}^{k+1} \|v_{a,y}\|_2 \\
\big[ \text{Proposition \ref{prop:inclusion_lorentz} and Lemma \ref{Lem:triv_estim_p_p'}, $p' > \tilde p' >2$} \big] \quad 
&\leq C(p) (T e^{-\tau})^{\frac{1 - \frac{1}{p} - k}{2}} \|v\|_\infty^{(k+1) (1 - \frac{\tilde p'}{p'})} \\
& \qquad \cdot \|v_a\|_{(k+1) \tilde p',(k+1) \tilde p'}^{(k+1) \frac{\tilde p'}{p'}} \|v_{a,y}\|_2 \\
\big[ \text{Proposition \ref{prop:equiv_L_p}} \big] \quad 
&\leq C(p) T^{\frac{1 - \frac{1}{p} - k}{2}} \|v\|_\infty^{(k+1) (1 - \frac{\tilde p'}{p'})} \|v_a\|_{(k+1)\tilde p'}^{(k+1) \frac{\tilde p'}{p'}} \|v_{a,y}\|_2 \\
 \Big[ \|v_a\|_{(k+1) \tilde p'} \leq C(p,k) \|v_{a,y}\|_{2}^{\frac{1}{2} - \frac{1}{(k+1) \tilde p'}} \|v_a\|_{2}^{\frac{1}{2} + \frac{1}{(k+1) \tilde p'}} \Big] \quad
 &\le C(p,k) (T e^{-\tau})^{\frac{1 - \frac{1}{p} - k}{2}} \|v\|_\infty^{(k+1) (1 - \frac{\tilde p'}{p'})} \\
& \qquad \cdot \|v_{a,y}\|_2^{(\frac{k+1}{2} - \frac{1}{\tilde p'}) \frac{\tilde p'}{p'} + 1} \|v_a\|_2^{(\frac{k+1}{2} + \frac{1}{\tilde p'}) \frac{\tilde p'}{p'}} \\
\Big[\|v_a\|_\infty\le \|v_a\|_1^\frac{1}{3}\|v_{a,y}\|_2^\frac{2}{3}\Big]\quad&\le C(p,k) (T e^{-\tau})^{\frac{1 - \frac{1}{p} - k}{2}} \|v_{a,y}\|_2^{(k+1)(\frac{2}{3} -\frac{1}{6} \frac{\tilde p'}{p'})-\frac{1}{p'} + 1} \\
& \qquad \cdot  \|v_a\|_2^{(\frac{k+1}{2} + \frac{1}{\tilde p'}) \frac{\tilde p'}{p'}}\\ 
\bigg[ \alpha \beta \le \frac{\alpha^\gamma}{2}+2^{\frac{1}{\gamma-1}}\beta^\frac{\gamma}{\gamma-1},
\gamma = \frac{2}{(k+1)(\frac{2}{3} -\frac{1}{6} \frac{\tilde p'}{p'})-\frac{1}{p'}+1 }
\bigg] 
&\leq \frac{1}{2} \|v_{a,y}\|_2^2 + C(p,k) (T e^{-\tau})^{\frac{1 - \frac{1}{p} - k}{1-(k+1)(\frac{2}{3} -\frac{1}{6} \frac{\tilde p'}{p'})+\frac{1}{p'}}} \\
& \qquad \cdot \|v_a\|_2^{2 \frac{\frac{k+1}{2} \frac{\tilde p'}{p'} + \frac{1}{p'}}{1-(k+1)(\frac{2}{3} -\frac{1}{6} \frac{\tilde p'}{p'})+\frac{1}{p'}}} \\
\big[ \|v_a\|_2^2\le \|v_a\|_\infty \|v_a\|_1\le a \big] \quad
&\leq \frac{1}{2} \|v_{a,y}\|_2^2 + C(p,k)(T e^{-\tau})^{\frac{1 - \frac{1}{p} - k}{1-(k+1)(\frac{2}{3} -\frac{1}{6} \frac{\tilde p'}{p'})+\frac{1}{p'}}}\\
& \qquad \cdot a^\frac{\frac{k+1}3(2+\frac{\tilde p'}{p'})-1}{1-(k+1)(\frac{2}{3} -\frac{1}{6} \frac{\tilde p'}{p'})+\frac{1}{p'}} \|v_a\|_2^{2} \\
\bigg[ a = \frac{1}{\bar C(p,k)} \bigg( \frac{e^{\bar \tau}}{T} \bigg)^{\frac{1 - \frac{1}{p} - k}{\frac{k+1}3(2+\frac{\tilde p'}{p'})-1}} 
\bigg] \quad &\leq \frac{1}{2} \|v_{a,y}\|_2^2 + \frac{1}{8} \|v_a\|_2^{2}.
\end{split}
\end{equation}
Substituting \eqref{m:eq:f3} into \eqref{Equa:entropy_v_a} we get \eqref{Equa:entropy_decrease_1}. \\
The case $p = \infty$, i.e. $b \in L^\infty_{t,x}$, can be treated in a much simpler way, because $\|\hb\|_{\infty,\infty} = \|b\|_\infty$: by following the same lines as above
\begin{equation*}
\begin{split}
\bigg| \int v_{a,y} \hb v_a^{k+1} \bigg| 
&\le \|\hb\|_{\infty} \|v_a^{k+1}\|_2 \|v_{a,y}\|_2 \\
&\leq C (T e^{-\tau})^{\frac{1 - k}{2}} \|v_a\|_{(k+1)2}^{k+1} \|v_{a,y}\|_2 \\
 \big[ 
\|v_a\|_{2(k+1)} \leq C(k) \|v_{a,y}\|_{2}^{\frac{1}{2} - \frac{1}{2(k+1)}} \|v_a\|_{2}^{\frac{1}{2} + \frac{1}{2(k+1)}} \big] \quad
&\le C(k) (T e^{-\tau})^{\frac{1 - k}{2}} \|v_{a,y}\|_2^{1 + \frac{k}{2}} \|v_a\|_2^{1 + \frac{k}{2}} \\
\bigg[ \alpha \beta \le \frac{1}{2}\alpha^\gamma+2^{\frac{1}{\gamma-1}}\beta^\frac{\gamma}{\gamma-1},
\gamma = \frac{2}{1 + \frac{k}{2}}
\bigg] \quad
&\leq \frac{1}{2} \|v_{a,y}\|_2^2 + C(k) (T e^{-\tau})^{\frac{1 - k}{1 - \frac{k}{2}}} \|v_a\|_2^{2 \frac{1 + \frac{k}{2}}{1 - \frac{k}{2}}} \\
&\leq \frac{1}{2} \|v_{a,y}\|_2^2 + C(p,k) (T e^{-\tau})^{\frac{1 - k}{1 - \frac{k}{2}}} a^{\frac{2k}{1 - \frac{k}{2}}} \|v_a\|_2^{2} \\
\bigg[
a = \frac{1}{\bar C(k)} \bigg( \frac{e^{\bar \tau}}{T} \bigg)^{\frac{1 - k}{2k}}  \bigg] \quad &\leq \frac{1}{2} \|v_{a,y}\|_2^2 + \frac{1}{8} \|v_a\|_2^{2}.
\end{split}
\end{equation*}


Assume instead \eqref{eq:con2} and $k \leq 2 - \frac{1}{p}$: using here $\frac{1}{p} + \frac{1}{p'} = 1$, $1 < p < \infty$, and a fixed $1 < \tilde p' < p'$ such that $(2 + k) \tilde p' > 2$, we have 

\begin{equation}
\label{m:eq:f2}
\begin{split}
\bigg| \int v_{a,y} \hb v_a^{k+1} \bigg| &\leq \frac{1}{k+2} \bigg| \int \hb_y v_a^{k+2} \bigg| \\
\big[ \text{Theorem  \ref{Theo:holder_lorentz}} \big] \quad
&\leq  C(k)\|\hb_y\|_{p,\infty} \|v_a^{k+2}\|_{p',1} \\
\big[ \text{(\ref{Equa:tildeby_Lp}),(\ref{Equa:estima_scal})} \big] \quad
&\leq C(p,k) (T e^{-\tau})^{\frac{2 - \frac{1}{p} - k}{2}} \|v_a\|_{(k+2)p',k+2}^{k+2} \\
\big[ \text{Proposition \ref{prop:inclusion_lorentz} and Lemma \ref{Lem:triv_estim_p_p'}, $1 < \tilde p' < p'$} \big] \quad
&\leq C(p,k) (T e^{-\tau})^{\frac{2 - \frac{1}{p} - k}{2}} \|v\|_\infty^{(k+2) (1 - \frac{\tilde p'}{p'})} \|v_a\|_{(k+2) \tilde p'}^{(k+2) \frac{\tilde p'}{p'}} \\
\Big[ 
\|v_a\|_{(k+2) \tilde p'} \le C(p,k) \|v_{a,y}\|_{2}^{\frac{1}{2} - \frac{1}{(k+2) \tilde p'}} \|v_a\|_{2}^{\frac{1}{2} + \frac{1}{(k+2) \tilde p'}} \Big] \quad 
&\leq C(p,k) (T e^{-\tau})^{\frac{2 - \frac{1}{p} - k}{2}} \|v\|_\infty^{(k+2) (1 - \frac{\tilde p'}{p'})} \\
& \qquad \cdot \|v_{a,y}\|_2^{(\frac{k+2}{2} - \frac{1}{\tilde p'}) \frac{\tilde p'}{p'}} \|v_a\|_2^{(\frac{k+2}{2} + \frac{1}{\tilde p'}) \frac{\tilde p'}{p'}} \\
\big[\|v_a\|_\infty\le\|v_a\|_1^{\frac{1}{3}}\|v_{a,y}\|_2^{\frac{2}{3}}\big]\quad&\leq C(p,k) (T e^{-\tau})^{\frac{2 - \frac{1}{p} - k}{2}} \|v_{a,y}\|_2^{(k+2) (\frac{2}{3} - \frac{1}{6}\frac{\tilde p'}{p'})-\frac{1}{p'}} \\
& \qquad \cdot \|v_a\|_2^{(\frac{k+2}{2} + \frac{1}{\tilde p'}) \frac{\tilde p'}{p'}} \\
\bigg[ \alpha \beta \le \frac{1}{2} \alpha^\gamma+2^{\frac{1}{\gamma-1}}\beta^\frac{\gamma}{\gamma-1}, \gamma = \frac{2}{(k+2) (\frac{2}{3} - \frac{1}{6}\frac{\tilde p'}{p'})-\frac{1}{p'}} 
\bigg] \quad
&\leq \frac{1}{2} \|v_{a,y}\|_2^2 + C(p,k) (Te^{-\tau})^{\frac{2 - \frac{1}{p} - k}{2-(k+2) (\frac{2}{3} - \frac{1}{6}\frac{\tilde p'}{p'})+\frac{1}{p'}}} \\
& \qquad \cdot \|v_a\|_2^{2 \frac{\frac{k+2}{2} \frac{\tilde p'}{p'} + \frac{1}{p'}}{2-(k+2) (\frac{2}{3} - \frac{1}{6}\frac{\tilde p'}{p'})+\frac{1}{p'}}} \\
[\|v\|_2^2\le\|v\|_\infty\|v\|_1]\quad&\leq \frac{1}{2} \|v_{a,y}\|_2^2 + C(p,k) (T e^{-\tau})^{\frac{2 - \frac{1}{p} - k}{2-(k+2) (\frac{2}{3} - \frac{1}{6}\frac{\tilde p'}{p'})+\frac{1}{p'}}} \\
& \qquad \cdot a^{\frac{\frac{k+2}{3}(2+\frac{\tilde p'}{p'})-2}{2-(k+2) (\frac{2}{3} - \frac{1}{6}\frac{\tilde p'}{p'})+\frac{1}{p'}}} \|v_a\|_2^{2} \\
\bigg[ a \leq \frac{1}{\bar C(p,k)} \bigg( \frac{e^{\tau}}{T} \bigg)^{\frac{2 - \frac{1}{p} - k}{\frac{k+2}{3}(2+\frac{\tilde p'}{p'})-2}} \bigg] \quad &\leq \frac{1}{2} \|v_{a,y}\|_2^2 + \frac{1}{8} \|v_a\|_2^{2}.
\end{split}
\end{equation}
so that \eqref{Equa:entropy_decrease_1} follows as in the other cases. \\
Again the case $p = \infty$, $k > 0$ can be obtained by applying H\"older's inequality:
\begin{equation*}
\begin{split}
\bigg| \int v_{a,y} \hb v_a^{k+1} \bigg| &\leq \frac{1}{k+2} \bigg| \int \hb_y v_a^{k+2} \bigg| \\
&\leq C(k) \|\hb_y\|_{\infty} \|v_a^{k+2}\|_{1} \\
&\leq C(k) (T e^{-\tau})^{\frac{2 - k}{2}} \|v_a\|_{k+2}^{k+2} \\
\big[ 
\|v_a\|_{k+2} \le C(k) \|v_{a,y}\|_{2}^{\frac{1}{2} - \frac{1}{k+2}} \|v_a\|_{2}^{\frac{1}{2} + \frac{1}{k+2}} \big] \quad 
&\leq C(k) (T e^{-\tau})^{\frac{2 - k}{2}} \|v_{a,y}\|_2^{\frac{k}{2}} \|v_a\|_2^{2 + \frac{k}{2}} \\
\bigg[ \alpha \beta \le \frac{1}{2} \alpha^\gamma+2^{\frac{1}{\gamma-1}} \beta^\frac{\gamma}{\gamma-1}, 
\gamma = \frac{4}{k} 
\bigg] \quad
&\leq \frac{1}{2} \|v_{a,y}\|_2^2 + C(k) (T e^{-\tau})^{\frac{2 - k}{2 - \frac{k}{2}}} \|v_a\|_2^{2 \frac{2 + \frac{k}{2}}{2 - \frac{k}{2}}} \\
&\leq \frac{1}{2} \|v_{a,y}\|_2^2 + C(k) (T e^{-\tau})^{\frac{2 - k}{2 - \frac{k}{2}}} a^{\frac{2k}{2 - \frac{k}{2}}} \|v_a\|_2^2 \\
\bigg[ a = \frac{1}{\bar C(k)} \bigg( \frac{e^{\bar \tau}}{T} \bigg)^{\frac{2 - k}{2k}} \bigg] \quad
&\leq \frac{1}{2} \|v_{a,y}\|_2^2 + \frac{1}{8} \|v_a\|_2^{2},
\end{split}
\end{equation*}
This concludes the proof.
\end{proof}

Set now
\begin{equation}
\label{Equa:time}
e^{\bar \tau} = T, \quad \bar a = a \bigg( p,k,\frac{e^{\bar \tau}}{T} \bigg) = a \big(p,k,1),
\end{equation}
where $a(\bar \tau,k,p)$ is given by the previous lemma: notice that it is independent on $T$. 

\begin{lemma}
\label{lemma:entropty2}
Assume \eqref{eq:con1} and $0 < k \le 1-\frac{1}{p}$ or \eqref{eq:con2} and $0 < k \le 2-\frac{1}{p}$. Then it holds 
\begin{equation*}
\int_{\bar \tau}^\infty \bigg( \frac{\|v_{\bar a}(\tau)\|_\infty^3}{2} + \frac{\|v_{\bar a}(\tau)\|_2^2}{8} \bigg)\, d\tau \leq \frac{3\bar a}{2}.
\end{equation*}
\end{lemma}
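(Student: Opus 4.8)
The plan is to integrate the differential inequality from Lemma~\ref{lemma:entropty} and exploit the structure of the truncated entropy $\eta_{\bar a}$. First I would apply Lemma~\ref{lemma:entropty} with $\bar \tau$ chosen as in \eqref{Equa:time}, so that the constant is $\bar a = a(p,k,1)$, obtaining
\begin{equation*}
\frac{d}{d\tau} \int \eta_{\bar a}(v(\tau,y))\,dy \le -\frac{\|v_{\bar a,y}(\tau)\|_2^2}{2} - \frac{\|v_{\bar a}(\tau)\|_2^2}{8}, \qquad \tau \ge \bar \tau.
\end{equation*}
Integrating from $\bar \tau$ to $\infty$ gives
\begin{equation*}
\int_{\bar \tau}^\infty \bigg( \frac{\|v_{\bar a,y}(\tau)\|_2^2}{2} + \frac{\|v_{\bar a}(\tau)\|_2^2}{8} \bigg)\, d\tau \le \int \eta_{\bar a}(v(\bar \tau,y))\,dy - \liminf_{\tau \to \infty}\int \eta_{\bar a}(v(\tau,y))\,dy \le \int \eta_{\bar a}(v(\bar \tau,y))\,dy,
\end{equation*}
using that $\eta_{\bar a} \ge 0$.

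Next I would bound the initial entropy. Since $\eta_{\bar a}(v) \le \bar a\, v/2 \le \bar a\, v_{\bar a}/2 + \bar a(v - \bar a)_+/2$ is not quite sharp, the cleaner route is to use $\eta_{\bar a}(v) \le \tfrac{\bar a}{2} v$ pointwise together with $\|v(\bar\tau)\|_1 = \|v_0\|_1 = \|\sqrt T u_0(\sqrt T \cdot)\|_1 = \|u_0\|_1 = 1$; more simply, $\eta_{\bar a}(v) \le \tfrac{\bar a}{2}|v|$ for all $v \ge 0$ since in the region $v \le \bar a$ we have $v^2/2 \le \bar a v/2$ and in the region $v > \bar a$ we have $\bar a(v - \bar a/2) \le \bar a v/2$. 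Hence $\int \eta_{\bar a}(v(\bar\tau))\,dy \le \tfrac{\bar a}{2}\|v(\bar\tau)\|_1 = \tfrac{\bar a}{2}$. This already gives the $\|v_{\bar a}\|_2^2$ term with the right constant, but to recover $\tfrac{3\bar a}{2}$ and the $\|v_{\bar a}\|_\infty^3$ term I would instead use the Gagliardo--Nirenberg interpolation $\|v_{\bar a}\|_\infty \le \|v_{\bar a}\|_1^{1/3}\|v_{\bar a,y}\|_2^{2/3}$ (the one-dimensional inequality already invoked in the proof of Lemma~\ref{lemma:entropty}), so that $\|v_{\bar a}\|_\infty^3 \le \|v_{\bar a}\|_1 \|v_{\bar a,y}\|_2^2 \le \|v_{\bar a}\|_2^{2}$ after noting $\|v_{\bar a}\|_1 \le \|v\|_1 = 1$; thus $\tfrac12\|v_{\bar a}\|_\infty^3 \le \tfrac12\|v_{\bar a,y}\|_2^2$, and adding this to the integrated inequality upgrades the coefficient $\tfrac12$ on $\|v_{\bar a,y}\|_2^2$ into a bound on $\tfrac12\|v_{\bar a}\|_\infty^3 + \tfrac18\|v_{\bar a}\|_2^2$ controlled by $\int\eta_{\bar a}(v(\bar\tau)) + (\text{leftover } \|v_{\bar a,y}\|_2^2 \text{ terms})$. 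More precisely: from the integrated inequality the full $\int_{\bar\tau}^\infty \tfrac12\|v_{\bar a,y}\|_2^2$ is bounded by $\tfrac{\bar a}{2}$, and since $\tfrac12\|v_{\bar a}\|_\infty^3 \le \tfrac12\|v_{\bar a,y}\|_2^2$ pointwise in $\tau$, we get $\int_{\bar\tau}^\infty(\tfrac12\|v_{\bar a}\|_\infty^3 + \tfrac18\|v_{\bar a}\|_2^2)\,d\tau \le \int_{\bar\tau}^\infty(\tfrac12\|v_{\bar a,y}\|_2^2 + \tfrac18\|v_{\bar a}\|_2^2)\,d\tau \le \tfrac{\bar a}{2}$, which is even stronger than claimed; I would present it with the stated constant $\tfrac{3\bar a}{2}$ to leave margin for the boundary term at infinity.

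The main obstacle I anticipate is justifying the integration by parts / fundamental-theorem step rigorously, namely that $\tau \mapsto \int \eta_{\bar a}(v(\tau,y))\,dy$ is absolutely continuous with the stated derivative and that the boundary term $\liminf_{\tau\to\infty}\int \eta_{\bar a}(v(\tau))$ is nonnegative (which is immediate from $\eta_{\bar a}\ge 0$, so in fact one does not even need it to vanish). The regularity needed — that $v$ is smooth enough for $\tau > 0$ and that all the norms appearing are finite and continuous in $\tau$ — follows from the local existence theory of Section~\ref{s:loc:exist} (Propositions~\ref{prop:loc:exs:1} and~\ref{prop:loc:exs:2}) applied on $[\bar\tau,\infty)$, since $\tilde b$ and $\tilde b_y$ have uniformly bounded Lorentz norms by \eqref{Equa:tildeb_Lp:subcr} and \eqref{Equa:tildeby_Lp:subcr}; a standard approximation/parabolic regularization argument then legitimizes \eqref{eq:der:entropy} and the subsequent computation, exactly as in the proof of Lemma~\ref{lemma:entropty}.
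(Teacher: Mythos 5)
Your proposal is essentially the paper's own proof: integrate the entropy inequality of Lemma \ref{lemma:entropty}, drop the nonnegative boundary term at infinity, convert $\|v_{\bar a,y}\|_2^2$ into $\|v_{\bar a}\|_\infty^3$ via the Gagliardo--Nirenberg inequality $\|v_{\bar a}\|_\infty\le\|v_{\bar a}\|_1^{1/3}\|v_{\bar a,y}\|_2^{2/3}$ together with $\|v_{\bar a}\|_1\le\|v\|_1=1$, and bound the initial entropy by a multiple of $\bar a\,\|v(\bar\tau)\|_1$. One small slip: the pointwise bound $\eta_{\bar a}(v)\le\tfrac{\bar a}{2}v$ is false for $v>\bar a$ (there $\bar a(v-\bar a/2)>\tfrac{\bar a}{2}v$); the correct elementary bound is $\eta_{\bar a}(v)\le \bar a\, v$, giving $\int\eta_{\bar a}(v(\bar\tau))\,dy\le\bar a$, which is still within the stated budget of $\tfrac{3\bar a}{2}$ (the paper uses $\int\eta_{\bar a}\le\tfrac12\|v_{\bar a}\|_2^2+\bar a\|v\|_1\le\tfrac{3\bar a}{2}$), so the conclusion is unaffected.
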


\begin{proof}
We consider the case $b(t) \in L^{p,\infty}$, the other case being completely similar. By Lemma \ref{lemma:entropty} and Gagliardo-Nirenberg inequality we have for $\tau \geq \bar \tau$ that 
\begin{equation*}
\begin{split}
   \frac{d}{d\tau}  \int \eta_{\bar a} &\leq 
- \frac{1}{2} \|v_{\bar a,y}\|_2^2 - \frac{\|v_{\bar a}\|_2^2}{8} \\
\big[ 
\|v_{\bar a}\|_\infty \leq \|v_{\bar a,y}\|_2^{2/3} \|v_{\bar a}\|_1^{1/3} \big] \quad 
&\leq - \frac{\|v_{\bar a}\|_\infty^3}{2\|v_{\bar a}\|_1} - \frac{\|v_{\bar a}\|_2^2}{8}. 
\end{split}
\end{equation*}
Integrating and observing that
\begin{equation*}
    \|v_{\bar a}(\tau)\|_1 \leq \|v(\tau)\|_1 = \|u(T(1-e^{-\tau}))\|_1 = \|u_0\|_1 = 1,
\end{equation*}
\begin{equation*}
    \int \eta_{\bar a}(v(\tau,y)) dy \leq \frac{1}{2} \|v_{\bar a}(\tau)\|_2^2 + \bar a \|v(\tau)\|_1 \leq \frac{3}{2} \bar a \|u_0\|_1 = \frac{3\bar a}{2},
\end{equation*}
we obtain 
\begin{equation*}
\begin{split}
\int_{\bar \tau}^\infty \bigg( \frac{\|v_{\bar a}(\tau)\|^3}{2} + \frac{\|v_{\bar a}\|_2^2}{8} \bigg) d\tau &\leq \int \eta_{\bar a}(v(\bar \tau,y)) dy \leq \frac{3}{2} \bar a,
\end{split}
\end{equation*}
which is the statement.
\end{proof}

\subsection{Bounds on the $L^2$-norm}
\label{Ss:bounds_integral_norms}

With the results of the above section, we can estimate the $L2p$-norms of the solution. The quantities $\bar a$ and $\bar \tau$ are defined in \eqref{Equa:time}.

\begin{lemma}
    \label{Lem:decay_L_2_case_1}
    If $b$ satisfies Conditions \eqref{eq:con1} and $k \leq 1 - \frac{1}{p}$ or Conditions \eqref{eq:con2} and $k \leq 2 - \frac{1}{p}$, then it holds 
    \begin{equation*}
        \|v(\tau)\|_2^2 \leq \hat C(k,p) \bar a e^{- \frac{\tau - \bar \tau}{2}}.
    \end{equation*}
\end{lemma}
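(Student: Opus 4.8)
The plan is to run a Gr\"onwall argument in the rescaled time $\tau$, starting from the entropy--dissipation estimate \eqref{Equa:entropy_decrease_1} of Lemma~\ref{lemma:entropty} at the fixed truncation level $a=\bar a$ from \eqref{Equa:time}. Set $F(\tau):=\int\eta_{\bar a}(v(\tau,y))\,dy$. Since $v\ge 0$ and $\|v(\tau)\|_1=\|u_0\|_1=1$, one has the algebraic identity $F=\tfrac12\|v_{\bar a}\|_2^2+\bar a(1-\|v_{\bar a}\|_1)$, and, exactly as at the end of the proof of Lemma~\ref{lemma:entropty2}, $F(\bar\tau)\le\tfrac12\|v_{\bar a}(\bar\tau)\|_2^2+\bar a\le\tfrac32\bar a$ using $\|v_{\bar a}\|_2^2\le\|v_{\bar a}\|_\infty\|v_{\bar a}\|_1\le\bar a$; since $F$ is non-increasing on $[\bar\tau,\infty)$ this gives $F(\tau)\le\tfrac32\bar a$ and $\|v_{\bar a}(\tau)\|_2^2\le 3\bar a$ for all $\tau\ge\bar\tau$. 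It is convenient to note already here that, by \eqref{Equa:L2_v_u} and $e^{\bar\tau}=T$, one has $\|v(\tau)\|_2^2=e^{-(\tau-\bar\tau)/2}\,\|u(T(1-e^{-\tau}))\|_2^2$, so the statement is equivalent to the uniform bound $\|u(t)\|_2^2\le\hat C(k,p)\bar a$ on $[0,T)$.

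The heart of the matter is to turn this $\mathcal O(1)$ bound into exponential decay with the \emph{sharp} rate $1/2$. Integrating \eqref{Equa:entropy_decrease_1} and using the one--dimensional Gagliardo--Nirenberg inequality $\|v_{\bar a}\|_\infty\le\|v_{\bar a,y}\|_2^{2/3}\|v_{\bar a}\|_1^{1/3}$ (whence $\|v_{\bar a,y}\|_2^2\ge\|v_{\bar a}\|_\infty^3$ and $\|v_{\bar a}\|_2^2\le\|v_{\bar a}\|_\infty$) yields $\int_{\bar\tau}^\infty\|v_{\bar a}(\tau)\|_\infty^3\,d\tau\le 3\bar a$, which is Lemma~\ref{lemma:entropty2}. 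From this I would first deduce that $F(\tau)\to 0$: along a sequence $\tau_n\to\infty$ one has $\|v_{\bar a}(\tau_n)\|_\infty\to 0$, hence $v(\tau_n)\le\bar a$, hence $\|v_{\bar a}(\tau_n)\|_1=1$ and $F(\tau_n)=\tfrac12\|v_{\bar a}(\tau_n)\|_2^2\to 0$, so by monotonicity $F\to 0$. Next, using that the ``bad--time'' set $\{\tau\ge\bar\tau:\|v(\tau)\|_\infty\ge\bar a\}$ has finite Lebesgue measure (on it $\|v_{\bar a}(\tau)\|_\infty\equiv\bar a$, so the $L^3_\tau$ bound applies) together with the local continuation estimates of Section~\ref{s:loc:exist}, one gets $\tau_1\ge\bar\tau$ beyond which the truncation is inactive, $v(\tau)=v_{\bar a}(\tau)$. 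For $\tau\ge\tau_1$, \eqref{Equa:entropy_v_a} is then the exact $L^2$ identity $\tfrac{d}{d\tau}\tfrac12\|v\|_2^2=-\|v_y\|_2^2-\tfrac14\|v\|_2^2+\int v_y\hb v^{1+k}$.

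To recover the rate $1/2$ one must not lose a fixed fraction of the $-\tfrac14\|v\|_2^2$ term when absorbing the drift, so I would re-run the chain of inequalities of \eqref{m:eq:f3} (under \eqref{eq:con1}) or \eqref{m:eq:f2} (under \eqref{eq:con2}), but keeping the factor $\|v(\tau)\|_\infty$ instead of replacing it by $\bar a$: this produces $\big|\int v_y\hb v^{1+k}\big|\le\tfrac12\|v_y\|_2^2+\varepsilon(\tau)\|v\|_2^2$ with $0\le\varepsilon(\tau)\le C(k,p)\,\bar a^{\,c_1}\|v(\tau)\|_\infty^{\,c_2}$ for suitable $c_1,c_2>0$, so that $\int_{\tau_1}^\infty\varepsilon(\tau)\,d\tau<\infty$ by $\int_{\tau_1}^\infty\|v\|_\infty^3\,d\tau<\infty$; the subcases $p=\infty$ are simpler via H\"older's inequality and the $(Te^{-\tau})$--factor (which is non-increasing) only helps. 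Plugging in gives $\tfrac{d}{d\tau}\|v\|_2^2\le-\big(\tfrac12-2\varepsilon(\tau)\big)\|v\|_2^2$ for $\tau\ge\tau_1$, and integrating the logarithmic derivative, $\|v(\tau)\|_2^2\le\|v(\tau_1)\|_2^2\,\exp\!\big(2\!\int_{\tau_1}^\infty\!\varepsilon\big)\,e^{-(\tau-\tau_1)/2}$; since $\|v(\tau_1)\|_2^2\le 3\bar a$ and $\|v(\tau)\|_2^2\le 3\bar a$ on $[\bar\tau,\tau_1]$ from the monotone bound on $F$, absorbing the finitely many $k,p$-dependent constants into $\hat C(k,p)$ gives the claim.

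The step I expect to be the real obstacle is exactly the sharp constant $1/2$: used naively, \eqref{Equa:entropy_decrease_1} only gives $e^{-(\tau-\bar\tau)/4}$, which (through \eqref{Equa:L2_v_u}) is too weak to bound $\|u(t)\|_2$ up to $t=T$. This forces both (i) proving that the truncation eventually switches off, which is delicate because the rescaled equation carries no usable maximum principle ($\hb_y$ lies only in weak-$L^p$), so one must pair the finiteness of the bad-time set with the $L^\infty$ continuation theory of Section~\ref{s:loc:exist}; and (ii) quantifying the gain in the drift estimate as a time-integrable quantity, i.e.\ redoing \eqref{m:eq:f3}/\eqref{m:eq:f2} while tracking the $\|v\|_\infty$ factor rather than discarding it. Everything else --- the identity for $F$, the Gagliardo--Nirenberg step, and the concluding Gr\"onwall --- is routine, and the two structural hypotheses \eqref{eq:con1} and \eqref{eq:con2} are treated in parallel.
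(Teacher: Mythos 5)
Your overall plan (entropy dissipation at level $\bar a$, Lemma \ref{lemma:entropty2}, redoing the drift estimate while keeping a smallness factor, then Gr\"onwall) contains the right ingredients, but two steps do not hold as written. First, the assertion that the truncation eventually switches off: finiteness of the measure of the bad set $\{\tau\ge\bar\tau:\|v(\tau)\|_\infty\ge\bar a\}$ does \emph{not} make it eventually empty --- it can recur in arbitrarily late, ever shorter excursions --- and the continuation estimates of Section \ref{s:loc:exist} are proved for the original equation, not for the rescaled one with the extra $\frac12(yv)_y$ term and the time-dependent $\tilde b$; even after transporting them and running a covering argument over the small bad set one obtains at best $\|v(\tau)\|_\infty\le C\bar a$ for large $\tau$, not $v=v_{\bar a}$, so the ``exact $L^2$ identity'' you invoke after $\tau_1$ is not available in the form you state. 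Second, your closing claim that $\|v(\tau)\|_2^2\le 3\bar a$ on $[\bar\tau,\tau_1]$ ``from the monotone bound on $F$'' is incorrect: $\eta_{\bar a}$ is linear above the level $\bar a$, so $F$ controls only the truncated quantity $\|v_{\bar a}\|_2^2$, not $\|v\|_2^2$; on the stretch where the truncation may be active your argument gives no bound on the full $L^2$ norm, hence no constant $\hat C(k,p)$ for those times.

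It is instructive to compare with the paper's proof, which shows the whole switch-off detour is unnecessary. There one writes the differential inequality for the \emph{untruncated} $z=\|v\|_2^2$ (the chain \eqref{m:eq:f3} stopped before inserting $a$), which closes as $\dot z\le\bigl(-\tfrac12+C(p,k)z^\alpha\bigr)z$ for all $\tau\ge\bar\tau$ since $Te^{-\tau}\le 1$ there. This Bernoulli-type inequality self-improves: it suffices to exhibit a single time $\tilde\tau$ with $C(p,k)z(\tilde\tau)^\alpha<\tfrac14$, and the explicit solution then decays at the sharp rate $\tfrac12$, the nonlinearity only costing the factor $2^{1/\alpha}$ --- no integrable $\varepsilon(\tau)$ bookkeeping and no requirement that $\|v(\tau)\|_\infty\le\bar a$ for \emph{all} later times. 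The single good time is exactly what Lemma \ref{lemma:entropty2} provides, with $\tilde\tau-\bar\tau\le\tfrac{3}{2\bar a^2}$ and $\|v(\tilde\tau)\|_2^2\le\|v(\tilde\tau)\|_\infty\le\bar a$, and the lag is absorbed into $\hat C(k,p)$ through the factor $e^{3/(4\bar a^2)}$ (accordingly, the paper's decay estimate is asserted from $\bar\tau+\tfrac{3}{2\bar a^2}$ on, precisely avoiding the early interval on which your argument breaks down). If you want to salvage your route, you must both justify the eventual uniform bound $\|v\|_\infty\lesssim\bar a$ by a genuine short-time growth estimate in the rescaled variables and replace the faulty $F$-based bound on $[\bar\tau,\tau_1]$ by an argument of this ODE type.
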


\begin{proof}
We consider only the case $b(t) \in L^{p,\infty}$ and $p < \infty$, since the other cases ($b(t) \in L^{p,\infty}$ or $b_x(t) \in L^{p,\infty}$) can be obtained with similar computations.

Taking $p' > \tilde p' \geq 2$, $(1 + k) \tilde p' > 2$, 
with the same computations as in \eqref{m:eq:f3} we obtain
\begin{equation*}
\begin{split}
\frac{d}{d\tau} \|v\|_2^2 &\leq -\frac{1}{2} \|v_{y}\|_2^2 + C(p,k) (T e^{-\tau})^{\frac{1 - \frac{1}{p} - k}{1-(k+1)(\frac{2}{3} -\frac{1}{6} \frac{\tilde p'}{p'})+\frac{1}{p'}}}  \|v\|_2^{2 \frac{\frac{k+1}{2} \frac{\tilde p'}{p'} + \frac{1}{p'}}{1-(k+1)(\frac{2}{3} -\frac{1}{6} \frac{\tilde p'}{p'})+\frac{1}{p'}}} \\ 
\big[ T e^{\bar \tau} = 1, \tau \geq \bar \tau \big] \quad &= \bigg( - \frac{1}{2} + C(p,k) \|v\|_2^{2 \frac{\frac{k+1}3 (2+\frac{\tilde p'}{p'}) - 1}{1-(k+1)(\frac{2}{3} -\frac{1}{6} \frac{\tilde p'}{p'})+\frac{1}{p'}}} \bigg) \|v\|_2^2.
\end{split}
\end{equation*}

The differential inequality
\begin{equation*}
    \frac{dz}{d\tau} \leq \bigg( - \frac{1}{2} + C(p,k) z^{\alpha} \bigg) z, \qquad \alpha > 0,
\end{equation*}
has a solution bounded solution if for some $\hat \tau$
$$
C(p,k) z(\hat \tau)^{\alpha} < \frac{1}{4}, 
$$
and in this case it holds
\begin{equation*}
    z(\tau) \le\frac{1}{\big(2C(p,k)+e^{\frac{\alpha}{2}(\tau-\hat\tau)}(\frac{1}{z^{\alpha}(\hat \tau)}-2C(p,k))\big)^{1/\alpha}}\leq  2^\frac{1}{\alpha}z(\hat \tau) e^{-\frac{\tau - \hat \tau}{2}}.
\end{equation*}

Using Lemma \ref{lemma:entropty2}, we estimate the first time $\tilde \tau$ such that
$$
\|v(\tilde \tau)\|_\infty \leq \bar a
$$
as follows
\begin{equation*}
\begin{split}
(\tilde \tau - \bar\tau) \bar a^3\le \int_{\bar \tau}^{\tilde \tau} \|v_{\bar a}\|_\infty^3 d\tau \leq \frac{3\bar a}{2}, \quad \tilde \tau - \bar\tau \leq \frac{3}{2 \bar a^2}.
\end{split}
\end{equation*}
Also, notice that by \eqref{m:eq:f3} and the choice of $\bar a$
\begin{equation*}
    C(p,k) \|v(\tilde \tau)\|_2^{2\alpha}\le C(p,k) \bar a^\alpha \leq \frac{1}{8} 
\end{equation*}
so that we can take $\hat \tau=\tilde \tau$. Hence
\begin{equation*}
        \|v(\tau)\|_2^2 \leq \hat C(p,k) \bar a e^{- \frac{\tau - \bar \tau}{2}}, \quad \tau\ge\bar \tau + \frac{3}{2 \bar a^2}. 
\end{equation*}
Hence the constant in the statement can be bounded by $\hat C(k,p) \leq 2^{\frac{1-(k+1)(\frac{2}{3} -\frac{1}{6} \frac{\tilde p'}{p'})+\frac{1}{p'}}{\frac{k+1}3 (2+\frac{\tilde p'}{p'}) - 1}} e^{\frac{3}{2 \bar a^2}}$.
\end{proof}

\begin{corollary}
    \label{Cor:bound_u_2}
    Assume Conditions \eqref{eq:con1} and $k \leq 1 - \frac{1}{p}$, or Conditions \eqref{eq:con2} and $k \leq 2 - \frac{1}{p}$: then it holds 
\begin{equation}
\label{Equa:clam_bound_u_1}
    \|u(t)\|_2^2 \le \hat C(p,k) \bar a, \qquad \forall t \in [T-1,T).
\end{equation}
\end{corollary}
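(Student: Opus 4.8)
The plan is to read off the bound directly from the decay estimate for the rescaled solution in Lemma \ref{Lem:decay_L_2_case_1}, by undoing the parabolic rescaling of Subsection \ref{Ss:realed_variables}. All four regimes appearing in the hypotheses ($b\in L^{p,\infty}$ or $b_x\in L^{p,\infty}$, with $p<\infty$ or $p=\infty$) can be handled simultaneously, since Lemma \ref{Lem:decay_L_2_case_1} already covers them and has absorbed the relevant scaling exponents; only the translation back to the original variables remains.

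First I would record the dictionary between $t$ and $\tau$. Since $t=T(1-e^{-\tau})$ and $e^{\bar\tau}=T$ by the normalization \eqref{Equa:time}, the window $t\in[T-1,T)$ corresponds exactly to $\tau\in[\bar\tau,\infty)$; moreover $e^{-\tau}=(T-t)/T$, so that $\sqrt{T}\,e^{-\tau/2}=\sqrt{T-t}$ and also $e^{-(\tau-\bar\tau)/2}=\sqrt{T}\,e^{-\tau/2}=\sqrt{T-t}$. Next I would use the exact identity \eqref{Equa:L2_v_u}, which in these variables reads $\|v(\tau)\|_2^2=\sqrt{T}\,e^{-\tau/2}\,\|u(t)\|_2^2=\sqrt{T-t}\,\|u(t)\|_2^2$, hence $\|u(t)\|_2^2=\|v(\tau)\|_2^2/\sqrt{T-t}$.

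Finally I would insert $\|v(\tau)\|_2^2\le\hat C(k,p)\,\bar a\,e^{-(\tau-\bar\tau)/2}$ from Lemma \ref{Lem:decay_L_2_case_1} and use $e^{-(\tau-\bar\tau)/2}=\sqrt{T-t}$ to cancel the $\sqrt{T-t}$ in the denominator:
\begin{equation*}
\|u(t)\|_2^2=\frac{\|v(\tau)\|_2^2}{\sqrt{T-t}}\le\frac{\hat C(k,p)\,\bar a\,\sqrt{T-t}}{\sqrt{T-t}}=\hat C(k,p)\,\bar a,\qquad t\in[T-1,T),
\end{equation*}
which is \eqref{Equa:clam_bound_u_1}, with the constant the one furnished by that lemma (in particular independent of $T$). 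I do not expect a genuine obstacle here: the only point to keep in mind is that the choice $e^{\bar\tau}=T$ is precisely what makes the growing factor $e^{\tau/2}$ produced by the rescaling of $u$ cancel the decaying factor $e^{-\tau/2}$ in Lemma \ref{Lem:decay_L_2_case_1}, turning exponential decay in $\tau$ into a uniform (non-decaying) bound in $t$ over the unit time interval before $T$.
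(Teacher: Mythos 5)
Your argument is exactly the paper's: both rest on the identity \eqref{Equa:L2_v_u} relating $\|u(t)\|_2^2$ to $\|v(\tau)\|_2^2$, the decay from Lemma \ref{Lem:decay_L_2_case_1}, and the observation that the normalization $e^{\bar\tau}=T$ makes the growth factor $e^{\tau/2}/\sqrt{T}$ cancel against $e^{-(\tau-\bar\tau)/2}$ on $t\in[T-1,T)$, i.e.\ $\tau\ge\bar\tau$. The only difference is cosmetic (you phrase the cancellation via $\sqrt{T-t}$ rather than via $e^{\bar\tau/2}/\sqrt{T}=1$), so the proposal is correct and matches the paper's proof.
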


\begin{proof}
Recalling that by \eqref{Equa:L2_v_u}
\begin{equation*}
    \|u(T(1-e^{-\tau}))\|_2^2 = \frac{e^{\tau/2}}{\sqrt{T}} \|v(\tau)\|_2^2,
\end{equation*}
Lemma \ref{Lem:decay_L_2_case_1} and \eqref{Equa:time} give for
$$
t = T(1 - e^{-\tau}) \geq T(1 - e^{-\bar \tau}) = T - 1
$$
that
$$
\|u(T(1-e^{-\tau}))\|_2^2 \leq \hat C(p,k) \bar a \frac{e^{\frac{\bar \tau}{2}}}{\sqrt{T}} \leq \hat C(p,k) \bar a.
$$
This proves \eqref{Equa:clam_bound_u_1}.
\end{proof}

\subsection{Bound on the $L^\infty$-norm}
\label{Ss:bounds_L_infty}

Having proved a uniform bound on the $L^2$-norm, now we are ready to bound the $L^\infty$-norm. This proves the first part of Theorem \ref{Theo:existence_1}

\begin{theorem}
\label{Theo:not_blow_up}
Assume Conditions \eqref{eq:con1} with $k\le 1-\frac{1}{p}$ or Conditions \eqref{eq:con2} with $k \leq 2 - \frac{2}{p}$. Then $u$ does not blow up at time $T$.
\end{theorem}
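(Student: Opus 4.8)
The input is Corollary~\ref{Cor:bound_u_2}: under either set of hypotheses one has a uniform bound
$K:=\sup_{t\in[T-1,T)}\|u(t)\|_2^2\le\hat C(p,k)\bar a<\infty$. Since on $[0,T)$ the solution is already known to exist and to be continuous in $L^\infty$ away from $t=0$ (Section~\ref{s:loc:exist}), it suffices to upgrade this $L^2$ control to a uniform bound $\sup_{[T-1,T)}\|u(t)\|_\infty<\infty$, which then rules out blow-up at $T$. The plan is the Duhamel bootstrap of \cite[Theorem~3.8]{guidolin2022global}, carried out in the Lorentz setting: fix a step $h\in(0,\tfrac12]$ and, for $t\in[T-1+h,T)$, use
\begin{equation*}
u(t)=G(h)\ast u(t-h)+\int_{t-h}^t G_x(t-s)\ast\bigl(b(s)u^{k+1}(s)\bigr)\,ds,
\end{equation*}
setting $N(t):=\sup_{s\in[T-1,t]}\|u(s)\|_\infty$, which is finite for $t<T$ by the local theory and non-decreasing in $t$.

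The first term is harmless: $\|G(h)\ast u(t-h)\|_\infty\le\|G(h)\|_2\|u(t-h)\|_2\le Ch^{-1/4}K^{1/2}$. For the integral term under \eqref{eq:con1} I would chain the variant of Young's inequality $\|G_x(t-s)\ast f\|_\infty\le\|G_x(t-s)\|_{q',1}\|f\|_{q,\infty}$; Hölder in Lorentz spaces (Theorem~\ref{Theo:holder_lorentz}), $\|b(s)u^{k+1}(s)\|_{q,\infty}\le C\|b(s)\|_{p,\infty}\|u^{k+1}(s)\|_{p_2,\infty}$ with $\tfrac1q=\tfrac1p+\tfrac1{p_2}$; the Lorentz embeddings (Propositions~\ref{prop:equiv_L_p} and \ref{prop:inclusion_lorentz}) together with $L^\infty$–$L^2$ interpolation, $\|u^{k+1}(s)\|_{p_2,\infty}\lesssim\|u(s)\|_{(k+1)p_2}^{k+1}\le\|u(s)\|_\infty^{\,k+1-\frac2{p_2}}\|u(s)\|_2^{\frac2{p_2}}$ (valid when $(k+1)p_2\ge2$); and Proposition~\ref{prop:appx}, $\|G_x(\sigma)\|_{q',1}\le Cq\,\sigma^{\frac1{2q'}-1}$, whose singularity at $\sigma=0$ is integrable as soon as $q>1$, with $\int_{t-h}^t(t-s)^{\frac1{2q'}-1}\,ds\le Ch^{\frac1{2q'}}$. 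The constraints on $p_2$ are $\tfrac1{p_2}<1-\tfrac1p$ (so that $q>1$) and, crucially, $\tfrac1{p_2}>\tfrac k2$ so that $\beta:=k+1-\tfrac2{p_2}<1$; such a $p_2$ exists precisely because $\tfrac k2<1-\tfrac1p$, which is guaranteed by $k\le1-\tfrac1p$ (indeed the argument works in the whole range $k<2-\tfrac2p$, the smaller range being imposed only by the $L^2$ bound of Corollary~\ref{Cor:bound_u_2}). Collecting the estimates gives
\begin{equation*}
\|u(t)\|_\infty\le Ch^{-1/4}K^{1/2}+C\,h^{\theta}\,\|b\|_{L^\infty_tL^{p,\infty}_x}\,K^{1/p_2}\,N(t)^{\beta},\qquad \theta=\tfrac1{2q'}>0,\quad\beta<1 .
\end{equation*}

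For \eqref{eq:con2} I would run the identical computation with $\|b(s)\|_{p,\infty}$ replaced by $\|b\|_{L^\infty_{t,x}}$ and the ordinary Young/Hölder inequalities on $L^p$ used in place of their Lorentz counterparts; here the interpolation exponent can be chosen freely provided $k<2$, which holds under $k\le2-\tfrac2p$, again producing $\beta<1$. In either case, taking $\sup_{s\in[T-1,t]}$ on the left and using monotonicity of $N$ yields $N(t)\le C_0+C_1N(t)^{\beta}$ with $C_0,C_1$ independent of $t$; since $\beta<1$ this forces $N(t)\le\max\{2C_0,(2C_1)^{1/(1-\beta)}\}$ for every $t\in[T-1+h,T)$, and combined with the finite value of $N$ on $[T-1,T-1+h]$ we conclude $\sup_{[T-1,T)}\|u\|_\infty<\infty$, i.e. $u$ does not blow up at $T$.

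The main obstacle is the bookkeeping of the Lorentz exponents in the integral term: $p_2$ (equivalently $q$) must be chosen so that simultaneously the Hölder-in-Lorentz inequality applies, $\|G_x(t-s)\|_{q',1}$ carries an integrable time singularity near $s=t$, and the interpolation produces a power $\beta<1$ of $\|u\|_\infty$; it is exactly the subcritical/critical restriction on $k$ — entering both through Corollary~\ref{Cor:bound_u_2} and through the elementary inequality $\tfrac k2<1-\tfrac1p$ — that makes this choice possible. Once $\beta<1$ the closing step is automatic, and in particular no smallness of the step $h$ is required.
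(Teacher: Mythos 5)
Your argument for the first case, Conditions \eqref{eq:con1}, is correct and takes a genuinely different route from the paper's for the step from $L^2$ to $L^\infty$. Both proofs use Corollary \ref{Cor:bound_u_2} as the sole input, but the paper upgrades the uniform $L^2$ bound by a Moser-type iteration: it differentiates $\int u^{2n}$, estimates the drift term in Lorentz norms, applies Gagliardo--Nirenberg to obtain the recursive bound \eqref{Equa:region_invariant} relating $\|u\|_{2n}$ to $\|u\|_n$, and iterates over $n=2^m$. You instead close in one Duhamel step with a sublinear absorption $N(t)\le C_0+C_1N(t)^{\beta}$, $\beta=k+1-\tfrac{2}{p_2}<1$; your exponent bookkeeping is right, since the window $\tfrac{k}{2}<\tfrac{1}{p_2}<1-\tfrac{1}{p}$ (together with $(k+1)p_2\ge 2$) is nonempty precisely when $k<2-\tfrac{2}{p}$, which holds under $k\le 1-\tfrac1p$. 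Your route is shorter and more self-contained; the paper's route produces the quantitative estimate \eqref{Equa:guidolin} with an explicit exponent on $\|u\|_2$, which is then reused for the decay rates in Section \ref{Ss:critical_case_integr} and Section \ref{s:long:behav}, so it buys more than mere non-blow-up.

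There is, however, a genuine gap in your treatment of the second case. Under \eqref{eq:con2} the drift satisfies only $b\in L^\infty_{\mathrm{loc}}((0,\infty)\times\R)$ with $b_x(t)\in L^{p,\infty}$; by Corollary \ref{Cor:bound_b_infty} this allows $|b(t,x)|\sim |x|^{1/p'}$, so the quantity $\|b\|_{L^\infty_{t,x}}$ by which you propose to replace $\|b(s)\|_{p,\infty}$ is in general infinite, and the \enquote{identical computation} does not go through. To control $\|b(s)u^{k+1}(s)\|_q$ in the Duhamel term for such $b$ you must pass through a moment of $u$ (as in Corollary \ref{Cor:cont_L_infty_case_2}), and the only available bound, Lemma \ref{Lem:first_moment}, gives $E(u(t))\le E(u_0)e^{C(1+r^k)t}$ with $r=N(t)$; the closing inequality then degenerates to $N\le C_0+C_1e^{cN^{k}}N^{\beta}$, which is no longer sublinear and does not self-improve. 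The paper's energy argument avoids this entirely: in $\frac{d}{dt}\int u^{2n}$ the drift contribution $\int u^{2n-2}u_x\,b\,u^{1+k}$ is a perfect derivative up to a constant, so one integrates by parts onto $b_x$ and only $\|b_x\|_{p,\infty}$ ever appears. To repair your proof for case \eqref{eq:con2} you would either have to assume $b$ globally bounded (a strictly stronger hypothesis) or replace the Duhamel step by an energy estimate admitting such an integration by parts.
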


\begin{proof}

We do the proof in the first case, the second being completely similar.

We notice first that the same computations leading to \cite[Theorem 3.8]{guidolin2022global} gives
\begin{equation}
\label{Equa:guidolin}
\|u(T)\|_\infty \leq C \max \Big\{ \|u(\hat T)\|_\infty, \sup_{[\hat T,T]} \|u(t)\|_2^{\frac{1 - \frac{1}{{p}}}{1 - \frac{1}{{p}} - \frac{k}{2}}} \Big\}.
\end{equation}
The statement of the theorem is now a consequence of Corollary \ref{Cor:bound_u_2} with $\hat T = T-1$.

For completeness, we rewrite the proof of \cite{guidolin2022global} for the case $b \in L^\infty_t L^{p,\infty}_x$, since in that paper it is only considered the case $b_x(t) \in L^\infty$.

\medskip

\noindent {\it Step 1 .} Write for some $2 < \tilde p < p$ (if $p = \infty$ then $\tilde p = p$)
\begin{equation}
\label{Equa:first_guidolin}
    \begin{split}
        \frac{d}{dt} \int u^{2n} dx &= - \frac{2(2n-1)}{n} \int (u^n_x)^2 dx - 2n (2n-1) \int u^{2n-2} u_x b u^{1+k} dx \\
        &= - \frac{2(2n-1)}{n} \int (u^n_x)^2 dx - 2 (2n-1) \int u^n_x b u^{n+k} dx \\
        &\leq - \frac{2(2n-1)}{n} \|u^n_x\|_2^2 + 2 (2n-1) \| u^n_x\|_2 \|b u^{n+k}\|_2 \\
        &\leq - \frac{2n-1}{n} \|u^n_x\|_2^2 + (2n-1) n \|b u^{n+k}\|_2^2 \\
\bigg[ 1 = \frac{2}{p} + \frac{p-2}{p} \bigg] \quad        &\leq - \frac{2n-1}{n} \|u^n_x\|_2^2 + C n (2n-1) \|b\|_{p,\infty}^2 \|u^{n + k}\|_{\frac{2p}{p-2},2}^{2} \\
\big[ (\ref{Equa:estima_scal}) \big] \quad &\leq - \frac{2n-1}{n} \|u^n_x\|_2^2 + C(p) n (2n-1) \|b\|_{p,\infty}^2 \|u^n\|_{(1 + \frac{k}{n}) \frac{2p}{p-2},(1 + \frac{k}{n}) 2}^{2(1+ \frac{k}{n})}.     \end{split}
\end{equation}
For $2 < \tilde p < p <\infty$, use now Lemma \ref{Lem:triv_estim_p_p'} with exponents
\begin{equation*}
    p_1 = \bigg( 1 + \frac{k}{n} \bigg) 2 < \bar p = \bigg( 1 + \frac{k}{n} \bigg) \frac{2p}{p-2} < p_2 = \bigg( 1 + \frac{k}{n} \bigg) \tilde p' = \bigg( 1 + \frac{k}{n} \bigg) \frac{2 \tilde p}{\tilde p - 2},
\end{equation*}
\begin{equation*}
    q_1 = p_1 = \bigg( 1 + \frac{k}{n} \bigg) 2, \quad \bar q = q_1, \quad q_2 = p_2 = \bigg( 1 + \frac{k}{n} \bigg) \frac{2 \tilde p}{\tilde p - 2},
\end{equation*}
so that
\begin{equation*}
\begin{split}
    \|u^n\|_{(1 + \frac{k}{n}) \frac{2p}{p-2},(1 + \frac{k}{n}) 2} &\leq C(p,k,n) \|u^n\|_{(1 + \frac{k}{n})2,(1 + \frac{k}{n})2}^{1-\theta} \|u^n\|_{(1 + \frac{k}{n}) \frac{2\tilde p}{\tilde p -2},(1 + \frac{k}{n}) \frac{2\tilde p}{\tilde p -2}}^{\theta}, 
\end{split}
\end{equation*}
with
\begin{equation*}
    \theta = \frac{\tilde p}{p} \in (0,1),
\end{equation*}
\begin{equation*}
    \begin{split}
        C(p,k,n) &= \bigg( \frac{p^2}{2 (p - \tilde p)} \bigg)^{\frac{1}{(1 + \frac{k}{n}) 2}} \bigg( \frac{p}{p-2} \bigg)^{\frac{1-\theta}{(1 + \frac{k}{n}) \frac{2 \tilde p}{\tilde p -2}}} \bigg( \frac{\frac{p}{p-2}}{\frac{\tilde p}{\tilde p - 2}} \bigg)^{\frac{\theta}{(1 + \frac{k}{n}) 2}}.
    \end{split}
\end{equation*}
By Proposition \ref{prop:equiv_L_p} we conclude that
\begin{equation*}
\begin{split}
    \|u^n\|_{(1 + \frac{k}{n}) \frac{2p}{p-2},(1 + \frac{k}{n}) 2} 
\leq C(p,k,n) \|u^n\|_{(1 + \frac{k}{n})2}^{1-\theta} \|u^n\|_{(1 + \frac{k}{n}) \frac{2\tilde p}{\tilde p -2}}^{\theta},
\end{split}
\end{equation*}
for another constant $C(p,k,n)$ which is uniformly bounded w.r.t. $n$ once $p,\tilde p,k$ are fixed. We can thus continue \eqref{Equa:first_guidolin} as
\begin{equation*}
    \begin{split}
        \frac{d}{dt} \int u^{2n} dx
&\leq - \frac{2n-1}{n} \|u^n_x\|_2^2 + C n (2n-1) \|b\|_{p,\infty}^2 \|u^n\|_{(1 + \frac{k}{n})2}^{2 (1 + \frac{k}{n}) (1-\theta)} \|u^n\|_{(1 + \frac{k}{n}) \tilde p'}^{2 (1 + \frac{k}{n}) \theta}.
    \end{split}
\end{equation*}
For $p = \infty$ one obtains the same formula above with $\tilde p' = 2$ and $\theta = 1$.

\medskip

\noindent 
{\it Step 2.}
Hence $\|u\|_{2n}$ is increasing only if
\begin{equation*}
    \|u^n_x\|_2 \leq C n \|u^n\|_{(1 + \frac{k}{n})2}^{(1 + \frac{k}{n}) (1-\theta)} \|u^n\|_{(1 + \frac{k}{n}) \tilde p'}^{(1 + \frac{k}{n}) \theta}.
\end{equation*}
Then using the embeddings by Gagliardo-Niremberg with exponents
\begin{equation*}
    \frac{1}{(1 + \frac{k}{n}) \tilde p'} = \frac{\tilde p-2}{2\tilde p(1 + \frac{k}{n})} = - \frac{a}{2} + \frac{1-a}{1}, \quad a = \frac{2}{3} \bigg( 1 - \frac{\tilde p-2}{2\tilde p (1 + \frac{k}{n})} \bigg),
\end{equation*}
for the function $w = u^n$ we obtain
\begin{equation*}
\begin{split}
    \|w_x\|_2 &\leq C n \|u^n\|_{(1 + \frac{k}{n})2}^{(1 + \frac{k}{n}) (1-\theta)} \big( \|w_x\|_2^a \|w\|_1^{1-a} \big)^{(1 + \frac{k}{n}) \theta}.
\end{split}
\end{equation*}
Using similarly Gagliardo-Niremberg with
\begin{equation*}
    \frac{1}{(1 + \frac{k}{n}) 2} = - \frac{b}{2} + 1 - b, \quad b = \frac{2}{3} \bigg( 1 - \frac{1}{(1 + \frac{k}{n}) 2} \bigg),
\end{equation*}
we can write
\begin{equation*}
\begin{split}
    \|w_x\|_2 &\leq C n \big( \|w_x\|_2^b \|w\|_1^{1-b} \big)^{(1 + \frac{k}{n})(1 - \theta)} \big( \|w_x\|_2^a \|w\|_1^{1-a} \big)^{(1 + \frac{k}{n}) \theta} \\
    &= C n \|w_x\|_2^{(1 + \frac{k}{n}) [b (1 - \theta) + a \theta]} \|w\|_1^{(1 + \frac{k}{n}) [(1-b)(1 - \theta) + (1-a) \theta)]} \\
    &= C n \|w_x\|_2^{(1 + \frac{k}{n}) \frac{2}{3} (1 - \frac{p-2}{(1 + \frac{k}{n})2p})} \|w\|_1^{(1 + \frac{k}{n})(\frac{1}{3} + \frac{2}{3} \frac{p-2}{(1 + \frac{k}{n})2p})}.
\end{split}
\end{equation*}

This can be rewritten as
\begin{equation*}
    \|w_x\|_2^{\frac{2}{3}(1 - \frac{1}{p} - \frac{k}{n})} \leq  C n \|w\|_1^{\frac{2}{3}(1 - \frac{1}{p} + \frac{k}{2n})}.
\end{equation*}
Using again Gagliardo-Niremberg we get
\begin{equation*}
\begin{split}
\|w\|_2 &\leq C \|w_x\|^{\frac{1}{3}} \|w\|_1^{\frac{2}{3}} \leq C n^{\frac{\frac{1}{2}}{1 - \frac{1}{p} - \frac{k}{n}}} \|w\|_1^{1 + \frac{\frac{k}{2n}}{1 - \frac{1}{p} - \frac{k}{n}}},
\end{split}
\end{equation*}
which rewritten for $u$ becomes
\begin{equation}
\label{Equa:region_invariant}
    \|u\|_{2n} \leq C^{\frac{1}{n}} n^{ \frac{\frac{1}{2n}}{1 - \frac{1}{p} - \frac{k}{n}}} \|u\|_n^{1 + \frac{\frac{k}{2n}}{1 - \frac{1}{p} - \frac{k}{n}}}.
\end{equation}

\medskip

\noindent{\it Step 3.} The above estimate implies that 
\begin{equation*}
    \max_{t \in [t_0,t]} \|u(t)\|_{2n} \leq \max \bigg\{ \|u(t_0)\|_{2n}, C^{\frac{1}{n}} n^{ \frac{\frac{1}{2n}}{1 - \frac{1}{p} - \frac{k}{n}}} \sup_{t \in [t_0,t]} \|u\|_n^{1 + \frac{\frac{k}{2n}}{1 - \frac{1}{p} - \frac{k}{n}}} \bigg\},
\end{equation*}
because the solution $u$ is decreasing when \eqref{Equa:region_invariant} is not satisfied.

Iterating the procedure for $n = 2^m$, $k=1,\dots,M$, one obtains
\begin{equation*}
\begin{split}
    \max_{t \in [t_0,t]} \|u(t)\|_{2^M} \leq \max \bigg\{ &\|u(t_0)\|_{2^M}, \\
    & \dots \\
    & C^{\alpha(4,M)} 2^{\beta(4,M)} \|u(t_0)\|_4^{\gamma(4,M)}, \\
    & C^{\alpha(2,M)} 2^{\beta(2,M)} \max_{t \in [t_0,t]} \|u\|_2^{\gamma(2,M)} \bigg\}.
\end{split}
\end{equation*}
The constants $\alpha(M',M)$, $\beta(M',M)$, $\gamma(M',M)$ are computed by iterating the exponent of \eqref{Equa:region_invariant}:
\begin{equation*}
    \gamma(M',M) = \prod_{n = M'+1}^{M} \frac{1 - \frac{1}{p} - \frac{k}{2^{n}}}{1 - \frac{1}{p} - \frac{k}{2^{n-1}}} = \frac{1 - \frac{1}{p} - \frac{k}{2^M}}{1 - \frac{1}{p} - \frac{k}{2^{M'}}}, \quad \gamma(M,M) = 1,
\end{equation*}
\begin{equation*}
    \alpha(M',M) = \sum_{n = M'}^{M-1} 2^{-n} \gamma(n+1,M) \leq \bar \alpha,
\end{equation*}
\begin{equation*}
    \beta(M',M) = \sum_{n = M'}^{M-1} \frac{n 2^{-n-1}}{1 - \frac{1}{p} - \frac{k}{2^{n}}} \gamma(n+1,M) \leq \bar \beta.
\end{equation*}
By applying repeatedly H\"older's inequality
\begin{equation*}
\|u(t_0)\|_{2^{M'}}^{\gamma(M',M)} \leq \big( \|u(t_0)\|_{2^{M'+1}}^{\frac{2}{3}} \|u(t_0)\|_{2^{M'-1}}^{\frac{1}{3}} \big)^{\gamma(M',M)} = \|u(t_0)\|_{2^{M'+1}}^{\gamma(M'+1,M)} \|u(t_0)\|_{2^{M'-1}}^{\gamma(M'-1,M)},
\end{equation*}
we get
\begin{equation*}
\begin{split}
    \max_{t \in [t_0,t]} \|u(t)\|_{2^M} \leq C^{\bar \alpha} 2^{\bar \beta} \max \bigg\{ &\|u(t_0)\|_{2^M}, 
\max_{t \in [t_0,t]} \|u\|_2^{\gamma(2,M)} \bigg\}.
\end{split}
\end{equation*}
Letting $M \to \infty$ one recovers \eqref{Equa:guidolin}.
\end{proof}

We conclude with the following corollary, which follows by using \eqref{Equa:guidolin} in $[0,t]$ because the bound of Corollary \ref{Cor:bound_u_2} is uniform in $t$.

\begin{corollary}
    \label{Cor:coroallry_uniform}
    Assume that $\|b(t)\|_{p,\infty}$ under Conditions \eqref{Equa:con1} with $k \leq 1 - \frac{1}{p}$ (or $\|b_x(t)\|_{p,\infty}$ under conditions \eqref{Equa:con2} with $k \leq 2 - \frac{1}{p}$) is uniformly bounded for $t \in (0,\infty)$. Then $\|u(t)\|_\infty$ is uniformly bounded. 
\end{corollary}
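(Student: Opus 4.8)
The plan is the one sketched in the remark preceding the statement: feed the \emph{time-uniform} $L^2$ bound of Corollary \ref{Cor:bound_u_2} into the iteration estimate \eqref{Equa:guidolin}, now applied on the whole interval $[0,T]$. I will carry out the case $b\in L^{p,\infty}$ with $k\le 1-\tfrac1p$; the case $b_x\in L^{p,\infty}$, $k\le 2-\tfrac1p$, is handled verbatim with $2-\tfrac1p$ replacing $1-\tfrac1p$ throughout.

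First I would check that, under the strengthened hypotheses \eqref{Equa:con1}, \eqref{Equa:con2}, every constant produced in Section \ref{s:glob} is independent of the reference time $T$. Set $M:=\sup_{t>0}\|b(t)\|_{p,\infty}<\infty$. For $\tau\ge\bar\tau$ with $e^{\bar\tau}=T$ one has $Te^{-\tau}\le 1$, and since $\tfrac{1-\frac1p-k}{2}\ge 0$ the formula for $\|\tilde b(\tau)\|_{p,\infty}$ from Subsection \ref{Ss:realed_variables} upgrades \eqref{Equa:tildeb_Lp:subcr} to the $T$-free bound $\sup_{\tau\ge\bar\tau}\|\tilde b(\tau)\|_{p,\infty}\le M$. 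Retracing Lemma \ref{lemma:entropty}, Lemma \ref{lemma:entropty2} and Lemma \ref{Lem:decay_L_2_case_1} with this bound in place shows that $\bar a=a(p,k,1)$ and $\hat C(p,k)$ depend only on $p$, $k$ and $M$. By the global existence part of Theorem \ref{Theo:existence_1} (that is, Theorem \ref{Theo:not_blow_up}) the solution is defined on all of $[0,\infty)$, so Corollary \ref{Cor:bound_u_2} may be applied with $T=s+1$ for \emph{every} $s\ge 0$; since $s\in[T-1,T)$ this yields
\begin{equation*}
\sup_{s\ge 0}\|u(s)\|_2^2 \;\le\; \hat C(p,k)\,\bar a \;=:\; K \;<\; \infty .
\end{equation*}

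Next I would invoke \eqref{Equa:guidolin} — whose derivation in Step 1--Step 3 of the proof of Theorem \ref{Theo:not_blow_up} is valid on an arbitrary interval $[t_0,t]$ and whose constant $C$ depends only on $p$, $k$, $M$ — on the interval $[0,T]$, i.e.\ with $\hat T=0$, using $u(0)=u_0\in L^\infty$:
\begin{equation*}
\|u(T)\|_\infty \;\le\; C\,\max\Big\{\|u_0\|_\infty,\ \big(\sup_{[0,T]}\|u(t)\|_2\big)^{\frac{1-\frac1p}{1-\frac1p-\frac k2}}\Big\} \;\le\; C\,\max\big\{\|u_0\|_\infty,\ K^{\frac12\frac{1-\frac1p}{1-\frac1p-\frac k2}}\big\}.
\end{equation*}
The right-hand side is independent of $T$, so taking the supremum over $T>0$ concludes the proof.

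The only genuinely delicate point — and the step I expect to be the actual obstacle — is the bookkeeping in the first paragraph: one must make sure that none of the constants in \eqref{Equa:guidolin}, in the choice of $\bar a$, or in the decay rate $\|v(\tau)\|_2^2\lesssim \bar a\,e^{-(\tau-\bar\tau)/2}$ secretly depends on $T$ once $\sup_t\|b(t)\|_{p,\infty}$ (resp.\ $\sup_t\|b_x(t)\|_{p,\infty}$) is finite. This is precisely why the local assumptions \eqref{eq:con1}, \eqref{eq:con2} are not sufficient here and the uniform bounds \eqref{Equa:con1}, \eqref{Equa:con2} are imposed; everything else is a direct citation of results already established.
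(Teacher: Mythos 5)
Your proposal is correct and follows exactly the route the paper intends: the paper's own justification is the one-sentence remark that the corollary "follows by using \eqref{Equa:guidolin} in $[0,t]$ because the bound of Corollary \ref{Cor:bound_u_2} is uniform in $t$", and your argument simply makes explicit the $T$-independence bookkeeping (via $e^{\bar\tau}=T$ and $\sup_t\|b(t)\|_{p,\infty}\le M$) that this remark takes for granted. No gap; same approach, spelled out in more detail.
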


\subsection{The critical case $k=1 - \frac{1}{p}$ or $k = 2 -\frac{1}{p}$}
\label{Ss:critical_case_integr}

In this section we study the case \eqref{Equa:con1} when $\|b(t)\|_{p,\infty}$ is uniform bounded in time and the exponent is critical, i.e. $k = 1 - \frac{1}{p}$, $p > 2$, or Conditions \eqref{Equa:con2} with $\|b_x(t)\|_{p,\infty}$ uniformly bounded in time and $k = 2 - \frac{1}{p}$, $p > 1$. We consider only the first case, being the analysis completely similar.

It follows form Lemma \ref{lemma:entropty2} that instead of \eqref{Equa:time} we can just choose $\bar \tau = 0$ and $\bar a = a(p,k)$ and then Lemma \ref{Lem:decay_L_2_case_1} gives
\begin{equation*}
    \|v(\tau)\|_2^2 \leq \hat C(k,p) \bar a e^{-\frac{\tau}{2}}. 
\end{equation*}
Using the definition of $v$ we obtain for $\tau \geq 0$
\begin{equation}
\label{Equa:decaycrit_u_2_2}
    \|u(T(1 - e^{-\tau}))\|_2 \leq \frac{C}{\sqrt{T}}.
\end{equation}
Letting $\tau \to \infty$ we obtain the decay of the $L^2$-norm as for the heat kernel.

Again Chebyshev's inequality applied to Lemma \ref{lemma:entropty2} gives that there exists $\hat \tau \in \frac{3}{2\bar a^2}[1,2]$ such that $\|v(\hat \tau)\|_\infty \leq \bar a$, and then
\begin{equation*}
\|u(T(1 - e^{\hat \tau}))\|_\infty = \frac{\|v(\hat \tau)\|_\infty}{\sqrt{T e^{- \hat \tau}}} \leq \frac{\bar a e^{\frac{3}{\bar a^2}}}{\sqrt{T}}.
\end{equation*}
Using again the estimate \eqref{Equa:guidolin} and noticing that for the critical case $k = 1 - \frac{1}{p}$
\begin{equation*}
\frac{\frac{k}{2}}{1 - \frac{1}{p} - \frac{k}{2}} = 2,
\end{equation*}
using \eqref{Equa:decaycrit_u_2_2}
we get for $\hat T = T (1 - e^{- \hat \tau}) \geq T (1 - e^{-\frac{3}{2\bar a^2}})$ \begin{equation*}
\begin{split}
\|u(T)\|_\infty &\leq C \max \Big\{ \|u(\hat T)\|_\infty, \sup_{\hat T,T} \|u(t)\|_2^2 \Big\} \\
&\leq C \max \bigg\{ \frac{1}{\sqrt{T}}, \frac{1}{\sqrt{\hat T}} \bigg\} \leq \frac{C}{\sqrt{T}}.
\end{split}
\end{equation*}

We thus have proved the following

\begin{theorem}
\label{Theo:decay_bx_critical}
If $b \in L^\infty_t L^{p,\infty}_x$, $p \in (1,\infty]$, and $k = 1 - \frac{1}{2}$, or $b_x \in L^\infty_t L^{p,\infty}_x$, $p \in (1,\infty]$ and $k = 2 - \frac{1}{p}$, then for a constant $C$ independent on $u_0$ it holds
\begin{equation*}
\|u(t)\|_\infty \leq \frac{C}{\sqrt{t}}.
\end{equation*}
\end{theorem}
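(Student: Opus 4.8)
The plan is to re-run the rescaling machinery of Section~\ref{s:glob}, the new feature being that at the critical exponent no constant deteriorates as the reference time grows. We treat $b\in L^\infty_tL^{p,\infty}_x$ with $k=1-\tfrac1p$; the case $b_x\in L^\infty_tL^{p,\infty}_x$ with $k=2-\tfrac1p$ is identical. Fix an arbitrary $T>0$ (the solution is global, cf.\ the first part of Theorem~\ref{Theo:existence_1}) and perform the rescaling of Subsection~\ref{Ss:realed_variables} about $(T,0)$. Since $k=1-\tfrac1p$, the power of $Te^{-\tau}$ in \eqref{Equa:tildeb_Lp:subcr} is zero, so $\sup_{\tau>0}\|\tilde b(\tau)\|_{p,\infty}\le\sup_{t>0}\|b(t)\|_{p,\infty}$ with a bound independent of $T$. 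Consequently the parameter $\bar a=a(p,k)$ produced by Lemma~\ref{lemma:entropty} can be fixed once and for all, and in place of \eqref{Equa:time} we simply take $\bar\tau=0$.

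With this choice, Lemma~\ref{lemma:entropty2} together with Lemma~\ref{Lem:decay_L_2_case_1} gives, with a constant depending only on $p,k$,
\begin{equation*}
\|v(\tau)\|_2^2\le\hat C(k,p)\,\bar a\,e^{-\tau/2},\qquad \tau\gtrsim \bar a^{-2}.
\end{equation*}
Also, Chebyshev's inequality applied to the integral bound of Lemma~\ref{lemma:entropty2} produces a time $\hat\tau\simeq\bar a^{-2}$ (in the range where the previous estimate holds) with $\|v(\hat\tau)\|_\infty\le\bar a$. Set $\hat T:=T(1-e^{-\hat\tau})$; since $\hat\tau$ lies in a fixed bounded interval, $\hat T$ is comparable to $T$.

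It remains to undo the rescaling. From the defining identity $v(\hat\tau,y)=\sqrt{T}\,e^{-\hat\tau/2}\,u\big(\hat T,\sqrt{Te^{-\hat\tau}}\,y\big)$ we obtain
\begin{equation*}
\|u(\hat T)\|_\infty=\frac{\|v(\hat\tau)\|_\infty}{\sqrt{Te^{-\hat\tau}}}\le\frac{\bar a\,e^{\hat\tau/2}}{\sqrt T}=\frac{C(k,p)}{\sqrt T},
\end{equation*}
while from \eqref{Equa:L2_v_u}, noting that $t\in[\hat T,T)$ corresponds to $\tau\in[\hat\tau,\infty)$,
\begin{equation*}
\sup_{t\in[\hat T,T)}\|u(t)\|_2^2=\sup_{\tau\ge\hat\tau}\frac{e^{\tau/2}}{\sqrt T}\,\|v(\tau)\|_2^2\le\frac{\hat C(k,p)\,\bar a}{\sqrt T}.
\end{equation*}
Finally, for the critical exponent the quantity $\tfrac{1-1/p}{1-1/p-k/2}$ entering the estimate \eqref{Equa:guidolin} equals $2$ (and likewise for $k=2-\tfrac1p$ in the other case), so \eqref{Equa:guidolin} applied on $[\hat T,T]$ reads
\begin{equation*}
\|u(T)\|_\infty\le C\max\Big\{\|u(\hat T)\|_\infty,\ \sup_{[\hat T,T]}\|u(t)\|_2^{2}\Big\}\le\frac{C}{\sqrt T}.
\end{equation*}
As $T>0$ was arbitrary, relabelling $T$ as $t$ gives $\|u(t)\|_\infty\le C/\sqrt t$.

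The one genuinely delicate point — and the only place where criticality is used — is the $T$-independence of all the constants: one has to verify that the scale invariance $\sup_\tau\|\tilde b(\tau)\|_{p,\infty}\le\sup_t\|b(t)\|_{p,\infty}$ (and the analogous bound for $\tilde b_y$) really permits $\bar a=a(p,k)$ to be chosen independently of $\bar\tau$ and $T$, so that the composite constants $\hat C(k,p)\bar a$ and $\bar a\,e^{\hat\tau/2}$ stay bounded as $T\to\infty$. This hinges on the exponent computations in \eqref{Equa:tildeb_Lp:subcr} and \eqref{Equa:tildeby_Lp:subcr} and on the normalization $\|u_0\|_1=1$ used in Lemma~\ref{lemma:entropty2}. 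Everything else is bookkeeping with the rescaling identities and is routine.
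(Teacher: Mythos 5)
Your proposal is correct and follows essentially the same route as the paper: exploit the scale invariance of $\|\tilde b\|_{p,\infty}$ (resp. $\|\tilde b_y\|_{p,\infty}$) at the critical exponent to fix $\bar\tau=0$ and $\bar a=a(p,k)$ independently of $T$, extract a time $\hat\tau\simeq\bar a^{-2}$ with $\|v(\hat\tau)\|_\infty\le\bar a$ via Chebyshev applied to Lemma~\ref{lemma:entropty2}, undo the rescaling, and close with \eqref{Equa:guidolin} using that the exponent there equals $2$ at criticality. This is exactly the argument of Section~\ref{Ss:critical_case_integr}.
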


\section{Blow-up in finite time}
\label{s:blowup}

In this section we show that above the critical exponent $k>1-\frac{1}{p}$ for the case \eqref{eq:con1} or $k>2-\frac{1}{p}$ for the case \eqref{eq:con2}, the solutions blows up in general: we will prove this statement for $b \in L^p(\R)$ in the first case and $b_x\in L^p(\R)$ in the second case. A similar result has already been proved in \cite{toscani2012finite}, we repeat it here for completeness.

\subsection{Case \eqref{eq:con1}}
\label{Ss:case_1_blowup}

Let $0 < \alpha < 1$, $\beta > \alpha$ and $k > 1 - \alpha$. Consider an integrable smooth function $b \in L^p$ satisfying
\begin{equation}
\label{eq:syst:b}
    - |x|^{-\alpha} \leq b(x) \leq -|x|^{- \alpha}
 \ind_{|x| \leq \bar x} - |x|^{- \beta} \ind_{|x| > \bar x}
\end{equation}
with $\alpha p < 1, \beta p > 1$, for some constant $\bar x$ such that
\begin{equation}
\label{Equa:bar_x_cond}
    1 \leq \bar x \leq \bigg( \frac{\beta + k -1}{\alpha + k -1} \bigg)^{\frac{k}{\beta - \alpha}}.
\end{equation}

\begin{proposition}
\label{lemma:func1}
Fix a positive measurable function $f : \R \to [0,\infty)$.
\begin{enumerate}
    \item If
\begin{equation}\label{eq:cond:1:c}
\left( \frac{k}{k - (1-\alpha)} \right)^{-\frac{k}{3k+1 + \alpha}} 
\left( \int f^{k+1} |xb| \right)^{-\frac 1 {3k + 1 + \alpha}} E^\frac{k+1}{3k + 1 + \alpha} 
\le \bar x,
    \end{equation}
    then  
    \begin{equation}
\label{Equa:cond_1_m_vs_E}
        m \leq 2 
        \left( \frac{k}{k-(1 - \alpha)} \right)^\frac{2k}{3k + 1 + \alpha} \left(\int f^{k+1} |xb| \right)^\frac{2}{3k + 1 + \alpha} E^\frac{k-(\alpha+1)}{3k + 1 + \alpha}.
    \end{equation}
    \item If
\begin{equation}\label{eq:cond:2:c}
        \left(\int f^{k+1} |xb| dx \right)^{- \frac{1}{3k+1-\beta}} \left( \frac{2k}{k - (1 \beta)} \right)^{-\frac{k}{3k + 1 + \beta}} E^{\frac{k+1}{3k+1-\beta}} \ge x,
    \end{equation}
    then 
    \begin{equation}
    \label{Equa:m_esimta_beta}
    m \le 2 \left( \frac{2k}{k -  (1 - \beta)} \right)^{\frac{2k}{3k + 1 + \beta}} \left( \int f^{k+1} |xb| dx \right)^{\frac{2}{3k + 1 + \beta}} E^{\frac{k - (1 - \beta)}{3k + 1 +\beta}}.
    \end{equation}
\end{enumerate}
\end{proposition}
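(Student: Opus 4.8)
The plan is to bound the mass $m=\int f\,dx$ by cutting $\R$ into a ball $\{|x|\le R\}$ and its complement, for a radius $R>0$ to be chosen, and to balance a Chebyshev bound on the exterior against a H\"older bound on the interior. On the exterior, since $f\ge0$,
\begin{equation*}
\int_{|x|>R}f\,dx\le\frac1{R^2}\int_{|x|>R}|x|^2f\,dx\le\frac{E}{R^2}.
\end{equation*}
On the interior, writing $f=(f^{k+1}|xb|)^{1/(k+1)}\,|xb|^{-1/(k+1)}$ and applying H\"older with exponents $k+1$ and $\tfrac{k+1}{k}$,
\begin{equation*}
\int_{|x|\le R}f\,dx\le\Big(\int f^{k+1}|xb|\Big)^{\frac1{k+1}}\Big(\int_{|x|\le R}|xb|^{-1/k}\,dx\Big)^{\frac{k}{k+1}},
\end{equation*}
so everything reduces to estimating $\int_{|x|\le R}|xb|^{-1/k}$ by means of the pointwise lower bounds on $|b|$ furnished by \eqref{eq:syst:b}.

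For part (1) I take $R\le\bar x$: then $|b(x)|\ge|x|^{-\alpha}$ on $\{|x|\le R\}$, hence $|xb|^{-1/k}\le|x|^{-(1-\alpha)/k}$, which is integrable at the origin \emph{precisely because} $k>1-\alpha$, and $\int_{|x|\le R}|xb|^{-1/k}\,dx\le\frac{2k}{k-1+\alpha}R^{(k-1+\alpha)/k}$. Therefore
\begin{equation*}
m\le\frac{E}{R^2}+\Big(\tfrac{2k}{k-1+\alpha}\Big)^{\frac{k}{k+1}}\Big(\int f^{k+1}|xb|\Big)^{\frac1{k+1}}R^{\frac{k-1+\alpha}{k+1}}.
\end{equation*}
I then choose $R$ to equalize the two summands, i.e. $R^{(3k+1+\alpha)/(k+1)}=E/\big(C(k,\alpha)(\int f^{k+1}|xb|)^{1/(k+1)}\big)$ with $C(k,\alpha)=(2k/(k-1+\alpha))^{k/(k+1)}$, so that $m\le 2E/R^2$; substituting the value of $R$ and simplifying exponents yields exactly \eqref{Equa:cond_1_m_vs_E}, the exponent of $E$ being $\tfrac{k-1+\alpha}{3k+1+\alpha}=\tfrac{k-(1-\alpha)}{3k+1+\alpha}$. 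The hypothesis \eqref{eq:cond:1:c} enters at one point only: it is precisely the statement that this balancing radius satisfies $R\le\bar x$, which is what the H\"older step required.

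Part (2) runs along identical lines, with $R\ge\bar x$ and $\beta$ in the role of $\alpha$. Here I split $\int_{|x|\le R}|xb|^{-1/k}=\int_{|x|\le\bar x}+\int_{\bar x<|x|\le R}$: on the inner part $|b|\ge|x|^{-\alpha}$ as before, on the outer annulus $|b|\ge|x|^{-\beta}$, the exponent $-(1-\beta)/k$ being admissible since $k>1-\alpha>1-\beta$. Because $\bar x\ge1$, the outer annulus carries the main term, while the bounded contribution of $\{|x|\le\bar x\}$ is absorbed into it using the upper bound in \eqref{Equa:bar_x_cond}; one gets $\int_{|x|\le R}|xb|^{-1/k}\lesssim\frac{2k}{k-1+\beta}R^{(k-1+\beta)/k}$. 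Balancing against $E/R^2$ as before produces \eqref{Equa:m_esimta_beta}, and the requirement that the balancing radius obey $R\ge\bar x$ is exactly \eqref{eq:cond:2:c}.

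Conceptually the argument is just ``Chebyshev, then H\"older, then optimize the cutoff''; the real work is the bookkeeping of constants and exponents. The point requiring care is that the two structural constraints — integrability at the origin ($k>1-\alpha$) and the size regime of $b$ that fixes which pointwise bound is in force ($R\le\bar x$ resp. $R\ge\bar x$) — must reappear exactly as the hypotheses \eqref{eq:cond:1:c} and \eqref{eq:cond:2:c}; in part (2) one must also verify that the $\{|x|\le\bar x\}$ piece of $\int|xb|^{-1/k}$ is harmless, which is where $\bar x\ge1$ and \eqref{Equa:bar_x_cond} are used. (I would also flag that, as written, the exponent of $E$ in \eqref{Equa:cond_1_m_vs_E} and the exponents $3k+1-\beta$ appearing in \eqref{eq:cond:2:c} seem to contain sign typos and should read $k-(1-\alpha)$ and $3k+1+\beta$.)
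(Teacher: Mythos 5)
Your proof is correct and follows essentially the same route as the paper's: Chebyshev on $\{|x|>R\}$, H\"older with exponents $k+1$ and $\tfrac{k+1}{k}$ against the weight $|xb|$ on $\{|x|\le R\}$ (splitting at $\bar x$ in part (2)), and then choosing $R$ to equalize the two terms, with the hypotheses \eqref{eq:cond:1:c} and \eqref{eq:cond:2:c} serving exactly to place the balancing radius on the correct side of $\bar x$. You are also right that the exponents in the statement contain sign typos ($E^{k-(\alpha+1)}$ should be $E^{k-(1-\alpha)}$, and the $3k+1-\beta$'s should be $3k+1+\beta$), as the paper's own computation confirms.
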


\begin{proof}
{\it Case 1.} We compute
\begin{equation*}
    \begin{split}
        m &= \int f dx = \int_{-R}^R f dx + \int_{|x| > r} f dx \\
        &\leq \int_{-R}^R f |x|^{\frac{1-\alpha}{k+1}} \frac{1}{|x|^{\frac{1-\alpha}{k+1}}} dx + \frac{1}{R^2} \int_{|x| > R} |x|^2 f dx \\
        &\le \left( \int_{-R}^R f^{k+1} |x|^{1-\alpha} dx \right)^{\frac 1 {k+1}} \left( \int_{-R}^{R} \left(\frac{1}{|x|}\right)^{\frac{1-\alpha}{k}} \right)^{\frac{k}{k+1}} + \frac{1}{R^2} E\\
\big[ \text{if } R \leq \bar x \big] \quad       &\leq \left( \int_{\R} f^{k+1} |x b| \right)^{\frac{1}{k+1}} \left( \frac{2k}{k - (1 - \alpha)} \right)^\frac{k}{k+1}R^{\frac{k - (1 - \alpha)}{k+1}} + \frac{1}{R^2} E.\\
    \end{split}
\end{equation*}
    Choosing 
$$ 
R =
\left( \frac{2k}{k - (1 - \alpha)} \right)^{-\frac{k}{3k + 1 + \alpha}}
\left( \int f^{k+1} |xb| \right)^{- \frac 1 {3k + 1 + \alpha}} E^\frac{k+1}{3k + 1 + \alpha} \leq \bar x,
$$
we obtain
\begin{equation*}
    \begin{split}
        m &\leq 2 
        \left( \frac{2k}{k-(1 - \alpha)} \right)^\frac{2k}{3k + 1 + \alpha} \left(\int f^{k+1} |xb| \right)^\frac{2}{3k+1-\alpha}E^\frac{k - (1 - \alpha)}{3k + 1 + \alpha},
    \end{split}
\end{equation*}
which is estimate \eqref{Equa:cond_1_m_vs_E}.

\noindent {\it Case 2.} Similarly to the previous case, we compute
\begin{equation*}
    \begin{split}
        m &= \int f dx = \int_{|x| \leq M} f dx + \int_{|x| > M} f dx \\
        &\leq \int_{|x| \leq M} f |xb|^{\frac{1}{1+k}} \frac{1}{|xb|^{\frac{1}{1+k}}} dx + \frac{1}{M^2} E \\
     \big[ \text{if } \bar x \leq M \big] \quad   &\le \left(\int f^{k+1} |xb| dx \right)^{\frac{1}{k+1}} \left( \int_{|x| \leq \bar x} \left( \frac{1}{|x|}\right)^{\frac{1 - \alpha}{k}} dx + \int_{\bar x < |x| \leq M} \left( \frac{1}{|x|} \right)^{\frac{1 - \beta}{k}} \right)^{\frac{k}{k+1}} + \frac{1}{M^2}E \\
\big[ (\ref{Equa:bar_x_cond}) \big] \quad       &\le \left( \int f^{k+1} |xb| dx \right)^{\frac{1}{k+1}} \left( \frac{2k}{k -  (1 - \beta)} \right)^\frac{k}{k+1} M^{\frac{k-(1 - \beta)}{k+1}} + \frac{1}{M^2} E. 
    \end{split}
\end{equation*}

Choosing
$$
M := \left( \frac{2k}{k - (1 - \beta)} \right)^{-\frac{k}{3k+1+\beta}} \left( \int f^{k+1} |xb| \right)^{-\frac{1}{3k+1+\beta}} E^\frac{k+1}{3k+1+\beta},
$$
we conclude that
\begin{equation*}
    \begin{split}
        m &\leq 2 \left( \frac{2k}{k - (1 - \beta)} \right)^{\frac{2k}{3k + 1 + \beta}} \left( \int f^{k+1} |xb| dx \right)^{\frac{2}{3k + 1 + \beta}} E^{\frac{k - (1 - \beta)}{3k + 1 + \beta}},
    \end{split}
\end{equation*}
which is \eqref{Equa:m_esimta_beta}.
\end{proof}

\begin{theorem}
\label{prop:blowup}
     Assume \eqref{eq:con1} and  $k > 1 - \frac{1}{p}$. If $E(u_0)$ is sufficiently small and $b$ satisfying \eqref{eq:syst:b} and \eqref{Equa:bar_x_cond}, 
then the solution of \eqref{m:eq:pde} blows up in finite time.
\end{theorem}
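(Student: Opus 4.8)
The plan is to monitor the energy $E(t):=\int x^2\,u(t,x)\,dx$ of the nonnegative solution (with conserved mass $m:=\|u(t)\|_1=\|u_0\|_1$) and to show that, when $E(u_0)$ is sufficiently small, $E(t)$ is driven to $0$ in finite time; since a nonnegative $L^1\cap L^\infty$ function of fixed mass cannot have arbitrarily small second moment without $\|u\|_\infty\to\infty$, this is exactly a finite-time blow-up. First I would record the energy identity: multiplying \eqref{m:eq:pde} by $x^2$ and integrating by parts — legitimate for the mild solutions of Section \ref{s:loc:exist} after the usual cut-off in $x$, with $E(u(t))$ kept finite by a Gr\"onwall estimate in the spirit of Lemma \ref{Lem:first_moment} — one gets
\begin{equation*}
\frac{d}{dt}E(t)=2m+2\int x\,b(x)\,u^{k+1}(t,x)\,dx .
\end{equation*}
The size and sign information on $b$ in \eqref{eq:syst:b} yields $\int x\,b(x)\,u^{k+1}\,dx\le-\int u^{k+1}(t,x)\,|x\,b(x)|\,dx$, so
\begin{equation*}
\frac{d}{dt}E(t)\le 2m-2\int u^{k+1}(t,x)\,|x\,b(x)|\,dx .
\end{equation*}

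The heart of the argument is to bound the weighted nonlinear term below by a negative power of $E$, which is exactly Proposition \ref{lemma:func1}, part (1), applied with $f=u(t)$. Its hypothesis \eqref{eq:cond:1:c} says that the radius $R$ in the splitting used to prove that proposition does not exceed $\bar x$; by \eqref{Equa:bar_x_cond} together with a crude unconditional lower bound for $\int u^{k+1}|xb|$ (obtained by inserting $R=\bar x$ directly into that computation), this holds whenever $E(t)$ stays below an explicit threshold depending only on $m$ and $\bar x$. In that regime \eqref{Equa:cond_1_m_vs_E}, solved for the weighted integral, reads
\begin{equation*}
\int u^{k+1}(t,x)\,|x\,b(x)|\,dx\ \ge\ c(k,\alpha)\,m^{\frac{3k+1+\alpha}{2}}\,E(t)^{-\nu},\qquad \nu=\frac{k-(1-\alpha)}{2}>0 ,
\end{equation*}
the strict positivity of $\nu$ being precisely the supercriticality $k>1-\alpha$ available under the running assumptions of this subsection (in turn compatible with $k>1-\tfrac{1}{p}$ and $\alpha p<1$). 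Hence
\begin{equation*}
\frac{d}{dt}E(t)\le 2m-2\,c(k,\alpha)\,m^{\frac{3k+1+\alpha}{2}}\,E(t)^{-\nu}.
\end{equation*}

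I would then close by an elementary ODE comparison. Choosing $E(u_0)$ so small (relative to $m$) that it lies below the threshold above and that $2m\le c(k,\alpha)\,m^{(3k+1+\alpha)/2}E(u_0)^{-\nu}$, the right-hand side is strictly negative, so $E$ is strictly decreasing; therefore $E(t)\le E(u_0)$ for as long as the solution exists, the inequality persists and in fact $\tfrac{d}{dt}E\le-c(k,\alpha)\,m^{(3k+1+\alpha)/2}E^{-\nu}$, which, since $\nu>0$, forces $E(t)\to 0$ at some finite time $T_\ast$. On the other hand $m=\|u(t)\|_1\le 2R\|u(t)\|_\infty+E(t)/R^2$ for every $R>0$, and optimizing in $R$ gives $\|u(t)\|_\infty\gtrsim m^{3/2}E(t)^{-1/2}$; thus $\|u(t)\|_\infty$ cannot remain bounded as $t\uparrow T_\ast$, and since the local theory of Section \ref{s:loc:exist} provides continuation as long as $\|u\|_\infty$ is finite, the solution blows up at or before $T_\ast$. (The complementary, large-$E$ regime governed by part (2) of Proposition \ref{lemma:func1} is not needed here.)

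The step I expect to be the genuine obstacle is the bookkeeping that keeps part (1) of Proposition \ref{lemma:func1} applicable all the way up to $T_\ast$ — that is, that $E(t)$ never leaves the \virgolette{concentrated} range; this is where the monotonicity $\dot E<0$, itself a consequence of the smallness of $E(u_0)$, has to be bootstrapped into the estimate. The remaining ingredients are routine: the rigorous justification of the energy identity for merely mild solutions with an $L^p$ drift singular at the origin is the standard truncation argument, and producing an admissible smooth $b\in L^p\subset L^{p,\infty}$ with the prescribed near-origin singularity — for which \eqref{eq:syst:b}, \eqref{Equa:bar_x_cond} and the constraints $\alpha p<1$, $\beta p>1$, $k>1-\alpha$ are compatible precisely when $k>1-\tfrac{1}{p}$ — is elementary.
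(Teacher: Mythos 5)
Your proposal is correct and follows the same overall strategy as the paper: the virial identity $\dot E=2m+2\int xb\,u^{k+1}$, a lower bound on $\int u^{k+1}|xb|$ in terms of $m$ and $E$ via Proposition \ref{lemma:func1}, and an ODE comparison $\dot y=a-b y^{-\gamma}$ driving $E$ to zero in finite time, which forces $\|u\|_\infty\gtrsim m^{3/2}E^{-1/2}\to\infty$. Where you genuinely diverge is in how the hypothesis of Proposition \ref{lemma:func1} is secured. The paper uses the upper bound on $\bar x$ in \eqref{Equa:bar_x_cond} to order the two thresholds so that, for \emph{every} value of $(-\int u^{k+1}xb)^{-1}E^{k+1}$, at least one of \eqref{eq:cond:1:c} or \eqref{eq:cond:2:c} holds; this yields the unconditional inequality $\dot E\le 2m-C\min\bigl\{m^{\frac{3k+1+\alpha}{2}}E^{-\frac{k-(1-\alpha)}{2}},\,m^{\frac{3k+1+\beta}{2}}E^{-\frac{k-(1-\beta)}{2}}\bigr\}$, and the far-field exponent $\beta$ enters the final estimate. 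You instead use only part (1), verifying \eqref{eq:cond:1:c} by the crude bound $\int u^{k+1}|xb|\gtrsim m^{k+1}\bar x^{-(k-(1-\alpha))}$ obtained from the splitting at $R=\bar x$ (valid once $E\le m\bar x^2/2$), and then bootstrap via the strict monotonicity of $E$; this is sound, arguably cleaner, never invokes part (2) or the upper bound in \eqref{Equa:bar_x_cond}, but confines you from the outset to the small-energy regime (which the hypothesis $E(u_0)\ll 1$ permits). Both arguments share the same implicit reading of \eqref{eq:syst:b}, namely that $xb\le 0$ so that $\int xb\,u^{k+1}=-\int|xb|u^{k+1}$ (as written, with $b<0$ on all of $\R$, the sign of $xb$ flips at the origin; the intended drift, as in the odd case \eqref{eq:syst:b:2}, makes $xb$ sign-definite), so no point is lost there.
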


\begin{proof}
The choice of the constant $\bar x$ covers both cases \eqref{eq:cond:1:c} and \eqref{eq:cond:2:c}: indeed 
\begin{equation*}
    \bar x^{3k+1+\beta} \left( \frac{2k}{k -(1 - \beta)} \right)^k \leq \left( - \int u^{k+1} xb \right)^{-1} E^{k+1} \le \bar x^{3k + 1 + \alpha} \left( \frac{2k}{k-(1 - \alpha)} \right)^k,
\end{equation*}
if and only if \eqref{Equa:bar_x_cond} holds.
Then by Proposition \ref{lemma:func1} and computing $\frac{dE}{dt}$ by \eqref{m:eq:pde}, one obtains 
\begin{equation*}
\begin{split}
\frac{dE}{dt} &\le 2 m - 2 \int x b u^{k+1} dx \\
&\le 2m - C \min \bigg\{ \frac{m^{\frac{3k + 1 + \alpha}{2}}}{E^{\frac{k - (1 - \alpha)}{2}}}, \frac{m^{\frac{3k + 1 + \beta}{2}}}{E^{\frac{k - (1 - \beta)}{2}}} \bigg\}.
\end{split}
\end{equation*}
The exponents 
$$
\frac{k - (1 - \alpha)}{2}, \frac{k - (1 - \beta)}{2} > 0
$$
by the choice of $\alpha, \beta$, which gives the blow-up in finite time because of the ODE
$$
\dot y = a - \frac{b}{y^\gamma}, \quad \gamma > 0
$$
has a solution converging to $0$ in finite time like
\begin{equation*}
    y(t) \simeq (T - t)^{\frac{1}{1+\gamma}}
\end{equation*}
if the initial data is $< (b/a)^{1/\gamma}$. This last condition reads as
\begin{equation*}
    E(t=0) \lesssim \min \big\{ m^{\frac{3k + \alpha - 2}{k - (1 - \alpha)}}, m^{\frac{3k + \beta - 2}{k - (1 - \beta)}} \big\}.
\end{equation*}

The final observation is that since $m(u)$ is constant, the fact that $E(u) \to 0$ forces $u$ to blow up in $L^\infty$ as the H\"older inequality implies.
\end{proof}

\subsection{Case \eqref{eq:con2}}\label{Ss:case_Sobolev}

Let $\alpha \in (0,1]$, $k > 1 + \alpha$ and consider a smooth odd bounded function $b$ with $b_x \in L^p$ such that for $x \geq 0$
\begin{equation}
\label{eq:syst:b:2}
    - \min\{ x^\alpha,  (\bar x)^\alpha \big\} \leq b(x) \leq - \min\{ x^\alpha, (2 \bar x)^\alpha \}, \quad \bar x \geq 1,
\end{equation}
and $(1 - \alpha) p < 1$.

\begin{proposition}
\label{lemma:func1:2}
For every measurable positive function $f : \R \to [0,\infty)$ the following holds. 
\begin{enumerate}
    \item If
    \begin{equation}
    \label{eq:cond:1:c:2}
        \left( \frac{2k}{k - (\alpha + 1)} \right)^{- \frac{k}{k + 1 - \alpha}} \left( \int f^{k+1} |xb| dx \right)^{- \frac{1}{3k + 1 - \alpha}} E^{\frac{k+1}{3k+1-\alpha}} \leq \bar x,
    \end{equation}
    then
    \begin{equation*}
        m \le 2 \left( \frac{2k}{k - (\alpha + 1)} \right)^{\frac{2k}{3k + 1 - \alpha}} \left( \int f^{k+1} |xb| dx \right)^{\frac{2}{3k + 1 - \alpha}} E^{\frac{k - (\alpha + 1)}{3k + 1 - \alpha}}.
    \end{equation*}
    \item If
    \begin{equation}
    \label{eq:cond:2:c:2}
        \left( \frac{2k}{k - 1} \right)^{-\frac{k}{3k + 1}}  \left( \int f^{k+1} |xb| dx \right)^{- \frac{1}{3k + 1}} E^{\frac{k + 1}{3k + 1}} \geq \bar x,
    \end{equation}
    then 
    \begin{equation*}
        m \le 2 \left( \frac{2k}{k - 1} \right)^{\frac{2k}{3k + 1}} \left( \int f^{k+1} |xb| dx \right)^{\frac{2}{3k + 1}} E^{\frac{k - 1}{3k + 1}}.
    \end{equation*}
\end{enumerate}
\end{proposition}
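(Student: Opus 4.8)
The plan is to follow the proof of Proposition~\ref{lemma:func1}, exploiting the shape of the drift in \eqref{eq:syst:b:2}: one reads off that $|x\,b(x)| = |x|^{1+\alpha}$ for $|x| \le \bar x$, while $|x\,b(x)| \gtrsim |x|\,\bar x^\alpha \ge |x|$ for $|x| > \bar x$, using $\bar x \ge 1$. In both parts I would start from the splitting
\begin{equation*}
m = \int_{|x| \le \rho} f\,dx + \int_{|x| > \rho} f\,dx \le \Big( \int f^{k+1} |xb|\,dx \Big)^{\frac{1}{k+1}} \Big( \int_{|x| \le \rho} |xb|^{-\frac 1 k}\,dx \Big)^{\frac{k}{k+1}} + \frac{E}{\rho^2},
\end{equation*}
where the first summand comes from H\"older's inequality with exponents $k+1$ and $\frac{k+1}{k}$ against the weight $|xb|^{\frac{1}{k+1}}$, and the second from Chebyshev's inequality.

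For part~(1) I would take $\rho = R \le \bar x$, so that $|xb| = |x|^{1+\alpha}$ on the whole core region and
\begin{equation*}
\int_{|x| \le R} |xb|^{-\frac 1 k}\,dx = \int_{|x| \le R} |x|^{-\frac{1+\alpha}{k}}\,dx = \frac{2k}{k-(1+\alpha)}\, R^{\frac{k-(1+\alpha)}{k}},
\end{equation*}
which is finite precisely because $k > 1+\alpha$, while $\int_{|x| \le R} f^{k+1}|x|^{1+\alpha}\,dx \le \int f^{k+1}|xb|\,dx$. This gives $m \le A\,R^{\frac{k-(1+\alpha)}{k+1}} + R^{-2}E$ with $A = \big(\int f^{k+1}|xb|\big)^{\frac{1}{k+1}}\big(\frac{2k}{k-(1+\alpha)}\big)^{\frac{k}{k+1}}$. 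Choosing $R$ to balance the two summands — which is exactly the $R$ in hypothesis \eqref{eq:cond:1:c:2}, hence $R \le \bar x$ as required — and using $\frac{k-(1+\alpha)}{k+1} + 2 = \frac{3k+1-\alpha}{k+1}$, one gets $m \le 2\,A^{\frac{2(k+1)}{3k+1-\alpha}} E^{\frac{k-(1+\alpha)}{3k+1-\alpha}}$, which is the stated estimate after substituting $A$.

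For part~(2) I would instead take $\rho = M \ge \bar x$ and split the core weight integral as $\int_{|x|\le\bar x} + \int_{\bar x<|x|\le M}$: on the first region $|xb| = |x|^{1+\alpha}$ as before, contributing $\frac{2k}{k-(1+\alpha)}\bar x^{\frac{k-(1+\alpha)}{k}}$; on the second $|xb| \ge |x|$, contributing at most $\int_{\bar x<|x|\le M}|x|^{-\frac 1 k}\,dx \le \frac{2k}{k-1}M^{\frac{k-1}{k}}$, finite since $k > 1$. Since $1 \le \bar x \le M$ and $\frac{k-1-\alpha}{k}\le\frac{k-1}{k}$, the first contribution is $\le M^{\frac{k-1}{k}}$, so the whole core integral is $\le C(k,\alpha)\,M^{\frac{k-1}{k}}$; note that, contrary to Proposition~\ref{lemma:func1}, no analogue of condition \eqref{Equa:bar_x_cond} is required here, since the outer contribution $\sim M^{\frac{k-1}{k}}$ automatically dominates the inner one $\sim\bar x^{\frac{k-1-\alpha}{k}}$. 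One then has $m \le A'\,M^{\frac{k-1}{k+1}} + M^{-2}E$ with the corresponding constant $A'$, and balancing over $M$ — the optimal $M$ being precisely the threshold in \eqref{eq:cond:2:c:2}, hence $\ge\bar x$ — together with $\frac{k-1}{k+1}+2=\frac{3k+1}{k+1}$, gives $m \le 2\,(A')^{\frac{2(k+1)}{3k+1}} E^{\frac{k-1}{3k+1}}$, i.e.\ the stated inequality.

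The computations are routine; the only points that need attention are keeping the constraints $k>1+\alpha$ and $k>1$ visible (they ensure integrability of the relevant negative powers of $|x|$), and checking in part~(2) that the inner region never controls the bound — which, as noted, follows directly from $\bar x\in[1,M]$. The main bookkeeping task, and the thing one is most likely to get wrong, is making the balanced radius $R$ (resp.\ $M$) coincide with the threshold \eqref{eq:cond:1:c:2} (resp.\ \eqref{eq:cond:2:c:2}) and tracking the exponents so that the final powers of $\int f^{k+1}|xb|$ and $E$ come out exactly as stated.
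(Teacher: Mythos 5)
Your argument is essentially the paper's proof: the same H\"older--Chebyshev splitting at a radius $\rho$, the same computation of $\int_{|x|\le\rho}|xb|^{-1/k}\,dx$ using $|xb|=|x|^{1+\alpha}$ on the core and $|xb|\ge|x|$ outside, and the same balancing of the two terms; part (1) is identical in every detail. The one place you genuinely deviate is the inner region in part (2). The paper keeps the constant of the core integral equal to $\frac{2k}{k-1}$ by absorbing the contribution $\frac{2k}{k-(1+\alpha)}\bar x^{\frac{k-1-\alpha}{k}}$ of $\{|x|\le\bar x\}$ into the outer one; despite the annotation ``$\bar x\ge 1$'', this absorption really uses the largeness condition \eqref{Equa:cond_bar_x_2} on $\bar x$ (the analogue of \eqref{Equa:bar_x_cond}), which is assumed in Theorem \ref{prop:blowup:2}. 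You instead bound the inner piece by $\frac{2k}{k-(1+\alpha)}M^{\frac{k-1}{k}}$ and work with $C(k,\alpha)=\frac{2k}{k-1}+\frac{2k}{k-(1+\alpha)}$. That is perfectly sound and dispenses with any condition on $\bar x$ beyond $\bar x\ge 1$, but it does not give the statement verbatim: with $C(k,\alpha)>\frac{2k}{k-1}$ the balanced radius is no longer the threshold in \eqref{eq:cond:2:c:2} (so you should evaluate at that threshold rather than at your own optimum, which is smaller and need not exceed $\bar x$), and the final prefactor becomes $2\,C(k,\alpha)^{\frac{2k}{3k+1}}$ (or $(1+(C(k,\alpha)\frac{k-1}{2k})^{\frac{k}{k+1}})(\frac{2k}{k-1})^{\frac{2k}{3k+1}}$ if you evaluate at the stated threshold) instead of $2(\frac{2k}{k-1})^{\frac{2k}{3k+1}}$. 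Since the proposition is only invoked in Theorem \ref{prop:blowup:2} with a generic constant, this weaker version suffices for the application, but you should either accept the worse constant explicitly or invoke \eqref{Equa:cond_bar_x_2} to recover the stated one.
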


\begin{proof} {\it Case 1.} Define
\begin{equation}
    \label{Equa:def_R'}
R' :=  \left( \frac{2k}{k - (\alpha + 1)} \right)^{-\frac{k}{3k + 1 - \alpha}} \left( \int f^{k+1} |xb| dx \right)^{- \frac{1} {3k + 1-\alpha}} E^{\frac{k+1}{3k + 1 - \alpha}},
\end{equation}
and compute by H\"older inequality
\begin{equation*}
    \begin{split}
        m &= \int f dx = \int_{|x| \leq R'} f dx + \int_{|x| > R'} f dx \\
        &\le \left( \int f^{k+1} |x|^{\alpha + 1} dx \right)^{\frac{1}{k+1}} \left( \int_{|x| \leq R'} \frac{1}{|x|^{\frac{1+\alpha}{k}}} dx \right)^{\frac{k}{k+1}} + \frac{1}{{R'}^2} E \\
    \big[ (\ref{eq:cond:1:c:2}) \big] \quad   &= \left( \int f^{k+1} |xb| dx \right)^{\frac{1}{k+1}} \left( \frac{2k}{k-(\alpha+1)} \right)^{\frac{k}{k+1}} {(R')}^{\frac{k-(\alpha+1)}{k+1}} + \frac{1}{{(R')}^2} E \\
     \big[   (\ref{Equa:def_R'}) \big] \quad  &= 2 \left( \frac{2k}{k-(\alpha + 1)} \right)^{\frac{2k}{3k + 1 - \alpha}} \left( \int f^{k+1} |xb| dx \right)^{\frac{2}{3k + 1 - \alpha}} E^{\frac{k - (\alpha + 1)}{3k + 1 - \alpha}},
    \end{split}
\end{equation*}
    which is the first estimate.
    
\noindent{\it Case 2.}
Define
\begin{equation}
\label{Equa:def_M'}
M' := \left( \frac{2k}{k-1} \right)^{-\frac{k}{3k + 1}} \left( \int f^{k+1} |xb| dx \right)^{- \frac{1}{3k + 1}} E^{\frac{k+1}{3k + 1}},
\end{equation}
and compute
\begin{equation*}
    \begin{split}
        m &= \int f dx = \int_{|x| \leq M'} f dx + \int_{|x| > M'} f dx \\
        &\le \left( \int f^{k+1} |xb| dx \right)^{\frac{1}{k+1}} \left( \int_{|x| \leq M'} \frac{1}{|xb|^{\frac{1}{k}}} dx \right)^{\frac{k}{k + 1}} + \frac{1}{(M')^2} E \\
     \big[ (\ref{eq:syst:b:2}), (\ref{eq:cond:2:c:2}) \big] \quad  &\le \left( \int f^{k+1} |x|^{\alpha + 1} dx \right)^{\frac{1}{k+1}} \left( 2 \int_0^{\bar x} \frac{1}{|x|^{\frac{1 + \alpha}{k}}} dx + 2 \int_{\bar x}^{M'} \frac{1}{|x  (2 \bar x)^\alpha|^{\frac{1}{k}}} dx \right)^{\frac{k}{k+1}} + \frac{1}{(M')^2} E \\       
\big[ \bar x \geq 1 \big] \quad        &\le \left( \int f^{k+1} |xb| dx \right)^{\frac{1}{k+1}} \left( \frac{2k}{k - 1} \right)^{\frac{k}{k+1}}(M')^{\frac{k-1}{k+1}} + \frac{1}{(M')^2} E \\
  \big[ (\ref{Equa:def_M'} \big] \quad      &= 2 \left( \frac{2k}{k - 1} \right)^{\frac{2k}{3k + 1}} \left( \int f^{k+1} |xb| \right)^{\frac{2}{3k + 1}} E^{\frac{k - 1}{3k + 1}}.
    \end{split}
\end{equation*}
This is the second estimate in the statement.
\end{proof}

\begin{theorem} \label{prop:blowup:2} Assume \eqref{eq:con2} and  $k > 2-\frac{1}{p}$. If $E(u_0)\ll 1$ and $b$ satisfying \eqref{eq:syst:b} with
    \begin{equation}
    \label{Equa:cond_bar_x_2}
    \bar x \geq \left( \frac{k - 1}{k - (\alpha + 1)} \right)^{\frac{k}{\alpha}},
\end{equation}
then the solution of \eqref{m:eq:pde} blows up in finite time.
\end{theorem}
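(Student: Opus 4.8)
The plan is to follow the scheme of Theorem~\ref{prop:blowup}, working with the energy $E(u(t))=\int x^2u(t)\,dx$ rather than directly with $\|u\|_\infty$. First I would observe that the hypothesis $k>2-\frac1p$ is exactly what makes the class of drifts in \eqref{eq:syst:b:2} nonempty: one can pick $\alpha\in(0,1]$ with $1-\frac1p<\alpha<k-1$, so that the standing requirements $k>1+\alpha$ and $(1-\alpha)p<1$ of Subsection~\ref{Ss:case_Sobolev} hold (the latter being precisely what gives $b_x\in L^p$), and then take $b$ odd and bounded as in \eqref{eq:syst:b:2} with $\bar x$ subject to \eqref{Equa:cond_bar_x_2} and $\bar x\ge1$. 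The first genuine point, mirroring the opening remark in the proof of Theorem~\ref{prop:blowup}, is that the two threshold conditions \eqref{eq:cond:1:c:2} and \eqref{eq:cond:2:c:2} are of the form $Q\le A$ and $Q\ge B$ for $Q:=\big(\int f^{k+1}|xb|\,dx\big)^{-1}E^{k+1}$, where $A$ and $B$ are explicit powers of $\bar x$ times the constants of Proposition~\ref{lemma:func1:2}; the constraint on $\bar x$ in \eqref{Equa:cond_bar_x_2} (together with $\bar x\ge1$) is exactly what forces $A\ge B$, hence for every positive measurable $f$ at least one of the two cases of Proposition~\ref{lemma:func1:2} applies.

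Next I would apply Proposition~\ref{lemma:func1:2} with $f=u(t)$. Since $m(u(t))=\|u_0\|_1=1$ is conserved, each of the two conclusions, solved for $\int u^{k+1}|xb|\,dx$, gives a lower bound, and combining them yields, for every $t$ in the maximal existence interval,
\begin{equation*}
\int |xb|\,u^{k+1}\,dx\;\ge\;C\,\min\left\{\frac{m^{\frac{3k+1-\alpha}{2}}}{E^{\frac{k-\alpha-1}{2}}},\ \frac{m^{\frac{3k+1}{2}}}{E^{\frac{k-1}{2}}}\right\}.
\end{equation*}
On the other hand, exactly as in Lemma~\ref{Lem:first_moment}, testing \eqref{m:eq:pde} against $x^2$ gives $\frac{dE}{dt}=2m+2\int xb\,u^{k+1}\,dx$, and since $b$ is odd with $b\le0$ on $[0,\infty)$ one has $xb\le0$, so $\frac{dE}{dt}=2m-2\int|xb|\,u^{k+1}\,dx$. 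Inserting the previous lower bound produces
\begin{equation*}
\frac{dE}{dt}\;\le\;2m-C\,\min\left\{\frac{m^{\frac{3k+1-\alpha}{2}}}{E^{\frac{k-\alpha-1}{2}}},\ \frac{m^{\frac{3k+1}{2}}}{E^{\frac{k-1}{2}}}\right\},
\end{equation*}
where the exponents $\frac{k-\alpha-1}{2}$ and $\frac{k-1}{2}$ are strictly positive because $k>1+\alpha$.

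To conclude I would compare with the scalar ODE $\dot y=a-b\,y^{-\gamma}$, $\gamma>0$, which decreases to $0$ in finite time (like $(T-t)^{1/(1+\gamma)}$) whenever $y(0)<(b/a)^{1/\gamma}$. For small $E$ the active branch of the minimum is the one with exponent $\gamma=\frac{k-\alpha-1}{2}$, so the assumption $E(u_0)\ll1$ — made quantitative as $E(u_0)$ below the explicit threshold $\lesssim m^{\frac{3k+\alpha-2}{k-\alpha-1}}$ coming from this ODE — guarantees $E(u(t))\to0$ at some finite time $T$. Finally, since $m(u(t))\equiv1$, the elementary interpolation $m\le 2R\|u\|_\infty+R^{-2}E$, optimized over $R$, gives $m\le 3\|u\|_\infty^{2/3}E^{1/3}$, hence $\|u(t)\|_\infty\ge c\,E(u(t))^{-1/2}\to\infty$ as $t\to T^-$; therefore the solution cannot be global, i.e.\ it blows up in $L^\infty$ in finite time.

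The computations of the last two paragraphs are entirely parallel to Case~\eqref{eq:con1}, and I would not reproduce them in detail. The one step that requires care is the threshold-matching of the first paragraph: one must check that the upper bound on $Q$ coming from Case~1 and the lower bound from Case~2 actually overlap, which reduces to the elementary inequality $\bar x^{\alpha}\le\big(\tfrac{k-1}{k-\alpha-1}\big)^{k}$ between the constants of Proposition~\ref{lemma:func1:2} — equivalently a bound of the form $\bar x\le\big(\tfrac{k-1}{k-\alpha-1}\big)^{k/\alpha}$, so that together with $\bar x\ge1$ a nonempty range of admissible $\bar x$ remains. This is the only place where the precise shape of $b$ in \eqref{eq:syst:b:2} (the saturation at scale $\bar x$, and the value $(2\bar x)^\alpha$ versus $(\bar x)^\alpha$) is used in full.
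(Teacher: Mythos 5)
Your proposal follows the same route as the paper: check that the two cases of Proposition \ref{lemma:func1:2} together cover every admissible $f$, apply them to $f=u(t)$ with $m\equiv 1$ to extract a lower bound on $\int|xb|\,u^{k+1}\,dx$, insert this into $\frac{dE}{dt}=2m-2\int|xb|\,u^{k+1}\,dx$, compare with the ODE $\dot y=a-b\,y^{-\gamma}$, and convert $E\to0$ into $\|u\|_\infty\to\infty$ via the interpolation $m\lesssim\|u\|_\infty^{2/3}E^{1/3}$. All of this matches the paper's argument, and your preliminary observation that $k>2-\frac1p$ is exactly what makes the range $1-\frac1p<\alpha<k-1$ nonempty is correct.

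The one point you must resolve is the direction of the constraint on $\bar x$. Writing $Q=E^{k+1}/\int f^{k+1}|xb|\,dx$, condition \eqref{eq:cond:1:c:2} reads $Q\le A:=\bar x^{3k+1-\alpha}\bigl(\tfrac{2k}{k-\alpha-1}\bigr)^{k}$ and condition \eqref{eq:cond:2:c:2} reads $Q\ge B:=\bar x^{3k+1}\bigl(\tfrac{2k}{k-1}\bigr)^{k}$, so covering every $Q$ requires $B\le A$, i.e.\ $\bar x^{\alpha}\le\bigl(\tfrac{k-1}{k-\alpha-1}\bigr)^{k}$ --- exactly the upper bound you derive in your final paragraph, and the analogue of the upper bound in \eqref{Equa:bar_x_cond} used for Theorem \ref{prop:blowup}. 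But \eqref{Equa:cond_bar_x_2} as printed is the reverse inequality $\bar x\ge(\cdots)^{k/\alpha}$, and your first paragraph asserts that this printed hypothesis ``forces $A\ge B$'', which contradicts your own computation two paragraphs later. As written your argument is internally inconsistent at its one delicate step: you should state that the admissible range is $1\le\bar x\le\bigl(\tfrac{k-1}{k-\alpha-1}\bigr)^{k/\alpha}$ (nonempty since $k>1+\alpha$) and note explicitly that the inequality in \eqref{Equa:cond_bar_x_2} must be read in this corrected direction for the coverage argument, and hence the theorem, to go through.
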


\begin{proof}
The choice of the constant $c$ covers both the cases \eqref{eq:cond:1:c:2} and \eqref{eq:cond:2:c:2}: indeed,
as in the proof of Theorem \ref{prop:blowup}, the condition \eqref{Equa:cond_bar_x_2} implies that
\begin{equation*}
\bar x^{3k + 1 - \alpha} \bigg( \frac{2k}{k - (1 + \alpha)} \bigg)^k \leq \bar x^{3k+1} \bigg( \frac{2k}{k-1} \bigg)^k.   
\end{equation*}
Then by Proposition \ref{lemma:func1:2} and \eqref{m:eq:pde}, one obtains
\begin{equation*}
    \frac{dE}{dt} \geq 2m - C \min \bigg\{ \frac{m^{\frac{3k + 1 -\alpha}{2}}}{E^{\frac{k - (\alpha + 1)}{2}}}, \frac{m^{\frac{3k+1}{2}}}{E^{\frac{k-1}{2}}} \bigg\},
    \end{equation*}
which leads to $E(t) \to 0$ if
\begin{equation*}
    E(t=0) \lesssim \min \big\{ m^{\frac{3k - 1 - \alpha}{k - (1 - \alpha)}}, m^{\frac{3k - 1}{k - 1}} \big\}.
\end{equation*}
As in the previous case, $E(t) \searrow$ in finite time implies $\|u(t)\|_\infty$ blows up.
\end{proof}

\begin{remark}
\label{Rem:blow_up_always}
Observe that if $u$ is bounded then
\begin{equation*}
\frac{\|u\|_1^3}{\|u\|_\infty^2} \lesssim E,
\end{equation*}
so that
\begin{equation*}
\frac{m^{\frac{2k - (1-\alpha)}{2}}}{E^{\frac{k-(1-\alpha)}{2}}} \gtrsim m^{1 - \alpha - \frac{k}{2}} \|u\|_\infty^{k-(1-\alpha)},
\end{equation*}
\begin{equation*}
\frac{m^{\frac{3k+1}{2}}}{E^{\frac{k-1}{2}}} \gtrsim \|u\|_\infty^{k-1} m^2.
\end{equation*}
Hence if $m \ll 1$, the blow up may not be possible.

This is also easily verified directly by the estimate (obtained as in \eqref{m:eq:f3}) 
\begin{equation*}
\begin{split}
\frac{d}{dt} \|u\|_2^2 &\leq - \|u_x\|_2^2 + C \|b\|_{p,\infty} \|u_x\|_2 \|u\|_{(k+1)\tilde p'}^{k+1} \\
&\leq - \|u_x\|_2^2 + C \|u\|_1^{\frac{1}{\tilde p'}} \|u_x\|_2 \|u\|_\infty^{k+1 - \frac{1}{\tilde p'}}.
\end{split}
\end{equation*}
Thus $\|u\|_2^2$ is bounded if
\begin{equation*}
\|u\|_1^{\frac{1}{p'}} \|u\|_\infty^{k+1 - \frac{1}{\tilde p}} \lesssim \frac{\|u\|_\infty^{\frac{3}{2}}}{\|u\|_1^\frac{1}{2}} \lesssim \|u_x\|_2, \quad \|u\|_\infty^{k-\frac{1}{2} - \frac{1}{p'}} \lesssim \frac{1}{\|u\|_1^{\frac{1}{2} + \frac{1}{p'}}}.
\end{equation*}
where we have used Gagliando-Niremberg inequality. Hence using again the bound \eqref{Equa:guidolin} we have that this bound can be prolonged up to $+\infty$ if $\|u\|_1 \ll 1$.

A completely similar estimate can be done for the case $b_x \in L^\infty L^{p,\infty}$, $p \in (1,\infty]$.
\end{remark}

\section{Long behavior of solutions}
\label{s:long:behav}

In this last section, we discuss the long behavior of the solutions when there are uniform bounds on the drift $b$:
\begin{enumerate}
    \item $b \in L^\infty \big( (0,\infty), L^{p,\infty}(\R) \big)$ for the case (\ref{eq:con1}),
\item $b_x \in L^\infty \big( (0,\infty), L^{p,\infty}(\R) \big)$,
$b \in L^\infty_{\loc}((0,\infty) \times \R \big)$ for the case (\ref{eq:con2}).
\end{enumerate}

We prove the following theorem.

\begin{theorem}
    \label{Theo:long_time_1}
    The following holds.
    \begin{enumerate}
        \item \label{Point_1:long_time_1} If $k < 1 - \frac{1}{p}$, then there are drifts $b = b(x) \in L^{p,\infty}$ admitting a stationary solution; similarly, if $k < 2 - \frac{1}{p}$, there are drift $b_x(x) \in L^{p,\infty}_x$ admitting a stationary solution.


        \item \label{Point_3:long_time_1} If $k \geq 1 - \frac{1}{p}$, $b \in L^\infty_t L^{p,\infty}_x$ or $k \geq 2 - \frac{1}{p}$, $b_x \in L^\infty_t L^{p,\infty}_x$, every uniformly bounded solution $u(t)$ decays to $0$ as $t^{-\frac{1}{2}}$ in the $L^\infty$-norm.
    \end{enumerate}
\end{theorem}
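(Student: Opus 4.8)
\emph{Part \eqref{Point_1:long_time_1}: stationary solutions in the subcritical range.} A smooth, positive, even $u=u(x)\in L^1\cap L^\infty(\R)$ is a stationary solution of \eqref{m:eq:pde} for the drift $b(x):=u_x(x)/u(x)^{k+1}$, since then $(bu^{k+1})_x=u_{xx}$ and $u_t=0$. Taking the profile $u(x)=(1+x^2)^{-\gamma}$ one computes $b(x)=-2\gamma x(1+x^2)^{\gamma k-1}$, so $|b(x)|\simeq|x|^{2\gamma k-1}$ and $|b_x(x)|\simeq|x|^{2\gamma k-2}$ for large $|x|$; hence $b\in L^{p,\infty}$ iff $2\gamma k\le1-\tfrac1p$ and $b_x\in L^{p,\infty}$ iff $2\gamma k\le2-\tfrac1p$, while $u\in L^1$ iff $\gamma>\tfrac12$, and $b$ is smooth with $b(0)=0$, so $b\in L^\infty_{\mathrm{loc}}$. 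Since $k<1-\tfrac1p$ (resp. $k<2-\tfrac1p$), one may pick $\gamma>\tfrac12$ for which the relevant condition holds; by the uniqueness in Proposition \ref{prop:loc:exs:1} (resp. \ref{prop:loc:exs:2}) the solution with datum $u_0=u$ is this stationary profile, which does not decay (the moment hypothesis in \eqref{eq:con2} plays no role here, cf. the remark following Proposition \ref{prop:loc:exs:2}).

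\emph{Part \eqref{Point_3:long_time_1}: decay of bounded solutions.} Assume $\|u\|_{L^\infty_{t,x}}\le r<\infty$ and set $k_c:=\mathbf{critic}(p)$, so $k\ge k_c$ and the solution is global. Fix an arbitrary $T>0$ and pass to the rescaled variables of Section \ref{Ss:realed_variables}, producing $v(\tau,y)$ for $\tau\ge0$ with rescaled drift $\hb$. The key point is that boundedness of $u$ converts the supercritical problem into the critical one with a $T$-independent drift norm: from \eqref{Equa:tildeb_Lp:subcr}, \eqref{Equa:tildeby_Lp} one has $\|\hb(\tau)\|_{p,\infty}=(Te^{-\tau})^{(k_c-k)/2}\|b(T(1-e^{-\tau}))\|_{p,\infty}$ (and likewise for $\hb_y$), whereas $\|v(\tau)\|_\infty\le(Te^{-\tau})^{1/2}r$; hence, writing $v_{\bar a}^{k+1}=v_{\bar a}^{k-k_c}v_{\bar a}^{k_c+1}$ and using $0\le v_{\bar a}\le v$ and $k\ge k_c$,
\[
|\hb v_{\bar a}^{k+1}|\le|\hb v^{k-k_c}|\,v_{\bar a}^{k_c+1},\qquad \|\hb(\tau)v(\tau)^{k-k_c}\|_{p,\infty}\le r^{k-k_c}\sup_{t}\|b(t)\|_{p,\infty}=:\bar C,
\]
the powers $(Te^{-\tau})^{\pm(k-k_c)/2}$ cancelling exactly; the same holds for $\hb_y v^{k-k_c}$ in case \eqref{eq:con2}, after the integration by parts $\int v_{\bar a,y}\hb v_{\bar a}^{k+1}=-\tfrac{1}{k+2}\int\hb_y v_{\bar a}^{k+2}$.

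\emph{Running the critical machinery.} With this substitution the argument of Sections \ref{m:Ss:entropy:comp}--\ref{Ss:critical_case_integr} applies, with $k_c$ in place of $k$ and $\bar C$ in place of the rescaled drift norm. The proof of Lemma \ref{lemma:entropty} gives, for $\tau\ge0$, $\frac{d}{d\tau}\int\eta_{\bar a}(v(\tau,y))\,dy\le-\tfrac12\|v_{\bar a,y}(\tau)\|_2^2-\tfrac18\|v_{\bar a}(\tau)\|_2^2$, with $\bar a=\bar a(p,k,r)>0$ independent of $T$ (as in the critical case, the choice of $\bar a$ involves only $\bar C$). As in Lemmas \ref{lemma:entropty2} and \ref{Lem:decay_L_2_case_1} this yields $\int_0^\infty\big(\tfrac12\|v_{\bar a}(\tau)\|_\infty^3+\tfrac18\|v_{\bar a}(\tau)\|_2^2\big)\,d\tau\le\tfrac{3\bar a}{2}$, hence by Chebyshev a time $\hat\tau\in[\tfrac{3}{2\bar a^2},\tfrac{3}{\bar a^2}]$ with $\|v(\hat\tau)\|_\infty\le\bar a$, and then (from the differential inequality for $\|v\|_2^2$, together with the choice of $\bar a$) $\|v(\tau)\|_2^2\le\hat C(p,k,r)\,e^{-\tau/2}$ for $\tau\ge\hat\tau$. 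Putting $\hat T:=T(1-e^{-\hat\tau})$, relation \eqref{Equa:L2_v_u} gives $\sup_{[\hat T,T]}\|u(t)\|_2^2\le\hat C/\sqrt T$ and $\|u(\hat T)\|_\infty=\|v(\hat\tau)\|_\infty/\sqrt{Te^{-\hat\tau}}\le\bar a\,e^{3/(2\bar a^2)}/\sqrt T$. Finally apply the $L^\infty$-estimate \eqref{Equa:guidolin} on $[\hat T,T]$ — in the version with exponent $k_c$, obtained from the Moser iteration in the proof of Theorem \ref{Theo:not_blow_up} after absorbing $u^{k-k_c}\le r^{k-k_c}$ (for case \eqref{eq:con2} one first integrates by parts to make $b_x$ appear) — noting that $\tfrac{1-1/p}{1-1/p-k_c/2}=2$, so that $\|u(T)\|_\infty\le C\max\{\|u(\hat T)\|_\infty,\ \sup_{[\hat T,T]}\|u(t)\|_2^2\}\le C(p,k,r)/\sqrt T$. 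Since $T>0$ was arbitrary, $\|u(t)\|_\infty\le C(p,k,r)\,t^{-1/2}$.

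\emph{Main obstacle.} Everything past the rescaling is a transcription of the already-proven critical case; the one genuinely new ingredient is the exact cancellation of the factors $(Te^{-\tau})^{\pm(k-k_c)/2}$, which is what makes the a priori $T$-dependent supercritical estimates behave like critical ones. The most tedious point to check is that the Moser iteration producing \eqref{Equa:guidolin} — written in the paper only for the subcritical/critical range and for case \eqref{eq:con1} — carries over both to case \eqref{eq:con2} and to the effective exponent $k_c$, with all constants depending only on $p$, $k$, $r$ and $\sup_t\|b(t)\|_{p,\infty}$ (resp. $\sup_t\|b_x(t)\|_{p,\infty}$), never on $T$.
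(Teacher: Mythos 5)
Your proposal is correct and follows essentially the same route as the paper: the same family of power-law stationary profiles $(1+x^2)^{-\gamma}$ for Point \eqref{Point_1:long_time_1}, and for Point \eqref{Point_3:long_time_1} the same absorption of $u^{k-k_c}$ (resp. $v^{k-k_c}$) into the drift to reduce the supercritical bounded case to the critical machinery of Sections \ref{m:Ss:entropy:comp}--\ref{Ss:critical_case_integr}. You in fact spell out more carefully than the paper the exact cancellation of the $(Te^{-\tau})^{\pm(k-k_c)/2}$ factors and the need to rerun the Moser iteration with the effective exponent $k_c$.
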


\begin{proof} 
{\it Point \eqref{Point_1:long_time_1}.} Define the function
\begin{equation*}
    u(x) = 
    \frac{1}{(1 + x^2)^{\frac{1-1/p}{2k}}},
\end{equation*}
which is a stationary solution to \eqref{m:eq:pde} in $L^1$ if $k < 1 - \frac{1}{p}$ and the drift $b$ is given by
\begin{equation*}
    b(x) = - \frac{1 - \frac{1}{p}}{k} \frac{x}{(1 + x^2)^{\frac{1 + 1/p}{2}}}.
\end{equation*}
Being $b(x) \sim |x|^{-1/p}$ for $|x| \gg 1$, we have that $b \in L^{p,\infty}(\R)$.

For the case $b_x \in L^{p,\infty}$, one can similarly show that
\begin{equation*}
    u(x) = \frac{1}{(1 + x^2)^{\frac{2 - \frac{1}{p}}{2k}}},
\end{equation*}
which gives the drift
\begin{equation*}
    b(x) = - \frac{2 - \frac{1}{p}}{k} \frac{x}{(1 + x^2)^{\frac{1}{2p}}}.
\end{equation*}

\medskip

\noindent {\it Point \eqref{Point_3:long_time_1}.} We have only to consider the supercritical case, being $k = 1 - \frac{1}{p}$ studied in Section \ref{Ss:critical_case_integr}.

For the case $b \in L^\infty_t L_x^{p,\infty}$, we observe that
\begin{equation*}
b u^{1 + k} = (b u^{k- 1 + \frac{1}{p}}) u^{1 - \frac{1}{p}} = \hat b u^{1 - \frac{1}{p}},
\end{equation*}
with
\begin{equation*}
\|\hat b\|_{p,\infty} \leq \|u\|_\infty^{k - 1 + \frac{1}{p}} \|b\|_{p,\infty}.
\end{equation*}
Hence the analysis of the critical case can be applied here, deducing that $\|u(t)\|_\infty \leq \frac{C}{\sqrt{t}}$.

For the case $b_x \in L^{p,\infty}$, we follow the computations of \eqref{m:eq:f2}:
\begin{equation*}
\begin{split}
\bigg| \int v_{a,y} \hb v_a^{k+1} \bigg| &\leq \frac{1}{k+2} \bigg| \int \hb_y v_a^{k+2} \bigg| \\
&\leq C \|\hb_y\|_{p,\infty} \|v_a^{k+2}\|_{p',1} \\
&\leq C \|\hb_y\|_{p,\infty} \|v_a\|_\infty^{k - 2 + \frac{1}{p}} \|v_a^{2 - \frac{1}{p} + 2}\|_{p',1} \\
\big[ \|v(\tau)\|_\infty \lesssim (T e^{-\tau})^{\frac{1}{2}} \big] \quad &\leq C (T e^{-\tau})^{\frac{2 - \frac{1}{p} - k}{2}} (T e^{-\tau})^{\frac{k - 2 + \frac{1}{p}}{2}} \|v_a\|_{(2 - \frac{1}{p})p',2}^{2+2 - \frac{1}{p}} \\
\big[ \text{as in (\ref{m:eq:f2})} \big] \quad &\leq \frac{1}{2} \|v_{a,y}\|_2^2 + \frac{1}{8} \|v_a\|_2^{2}.
\end{split}
\end{equation*}
Hence one can repeat the same analysis as in the critical case $k = 2 - \frac{1}{p}$, replacing $\|b_x\|_{p,\infty}$ with $\|b_x\|_\infty \|u\|_\infty^{k - 2 + \frac{1}{p}}$. In particular one obtains the decay $\|u(t)\|_\infty \lesssim t^{-\frac{1}{2}}$. 
\end{proof}

\begin{remark}
    \label{Rem:bounde_L_p}
    By slightly changing the exponents in the subcritical case, it is possible to show that actually the vector field can be taken in $L^p$ (or $b_x \in L^p$) in Point \eqref{Point_1:long_time_1} of the above theorem.
\end{remark}

\printbibliography

\end{document}